\numberwithin{equation}{section}
\renewcommand{\thefootnote}{\fnsymbol{footnote}}
\newcommand{\Qq}{\mathbb{Q}}
\newcommand{\Rr}{\mathbb{R}}
\newcommand{\Zz}{\mathbb{Z}}
\newcommand{\Center}{\operatorname{center}}
\newcommand{\mld}{{\rm{mld}}}
\newcommand{\Ii}{{\Gamma}}
\newcommand\Mod{{\rm{mod}}}
\newtheorem{thm}{Theorem}[section]
\newtheorem{conj}[thm]{Conjecture}
\newtheorem{lem}[thm]{Lemma}
\newtheorem{defn}[thm]{Definition}
\newtheorem{claim}[thm]{Claim}
\theoremstyle{definition}
\newtheorem{rem}[thm]{Remark}
\newtheorem{ex}[thm]{Example}
\theoremstyle{definition}
\begin{document}

\title{An optimal gap of minimal log discrepancies of threefold non-canonical singularities}

\author{Jihao Liu and Liudan Xiao}

%\author{Jihao Liu}
\address{Department of Mathematics, The University of Uath, Salt Lake City, UT 84112, USA}
\email{jliu@math.utah.edu}

%\author{Liudan Xiao}
\address{Google LLC, 1600 Amphitheater Parkway, Mountain View, California, U.S.A.}
\email{liudan.xiao.pku@gmail.com}

\begin{abstract}
	We show that the minimal log discrepancy of any $\Qq$-Gorenstein non-canonical threefold is $\leq\frac{12}{13}$, which is an optimal bound.
\end{abstract}
\let\thefootnote\relax\footnotetext{Keywords: singularity, minimal log discrepancy.}
\subjclass[2010]{Primary 14E30, 
Secondary 14B05.}

\date{\today}

\maketitle
\pagestyle{myheadings}\markboth{\hfill  J.Liu and L.Xiao \hfill}{\hfill An optimal gap of minimal log discrepancies of threefold non-canonical singularities\hfill}

% admitting an $\epsilon$-plt blow up
\tableofcontents

\section{Introduction}

We work over the filed of complex numbers $\mathbb C$.

The minimal log discrepancy (mld for short), which was introduced by Shokruov, is an important algebraic invariant of singularities and plays a fundamental role in birational geometry. For simplicity, we only use a very simple version of mlds in this paper.

\begin{defn}
Let $X$ be a $\mathbb Q$-Gorenstein normal variety and $x\in X$ a closed point. The minimal log discrepancy of $X$ at $x$ is defined as
$$\mld(x,X):=\min\left\{a(E,X)|E\text{ is a prime divisor over } X,\Center_XE=\left\{x\right\}\right\},$$
and the \emph{minimal log discrepancy} of $X$ is defined as
$$\mld(X):=\min\left\{a(E,X)|E\text{ is a prime divisor over } X\right\}.$$
\end{defn}
It is conjectured by Shokurov that the set of minimal log discrepancies satisfies the ascending chain condition (ACC).
\begin{conj}\label{conj: acc mld no boundary}
Let $d$ be a positive integer. Then $$\left\{\mld(X)|\dim X=d\right\}$$ satisfies the ACC.
\end{conj}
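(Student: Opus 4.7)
The plan is to attack Conjecture \ref{conj: acc mld no boundary} by induction on the dimension $d$, since in its full generality the statement is an open problem of Shokurov that only becomes tractable once one has strong structural results in each dimension. The case $d=1$ is trivial, as every $\mathbb{Q}$-Gorenstein curve is smooth and so $\mld(X)=1$ identically. The case $d=2$ is a classical theorem of Alexeev and Shokurov following from the classification of log terminal surface singularities together with the explicit form of the discrepancies of Du Val and cyclic quotient points. So the genuine content starts at $d=3$, and I expect the optimal gap $\frac{12}{13}$ proved in this paper to serve as the first step of the dimension-three induction.

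For general $d$, the standard strategy is to decompose the problem into two sub-problems: first, identify a candidate set $\mathcal{A}_d$ of possible accumulation points of $\{\mld(X)\mid \dim X=d\}$ from above; second, prove a gap statement around each $a\in \mathcal{A}_d$, i.e., produce $\varepsilon_a>0$ such that no value $\mld(X)$ lies in the half-open interval $(a,a+\varepsilon_a)$. Because $\mld(X)\leq d$ for every $X$, and because every accumulation point should be rational with bounded denominators, such gap statements will assemble to the desired ACC. First I would treat the accumulation point $a=d$ (the smooth locus), which follows from a straightforward blow-up estimate; then successively lower gaps, corresponding to the accumulation points where $X$ is canonical, terminal, etc.

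The execution requires combining several deep tools: Birkar's BAB theorem to bound Fano varieties of bounded log discrepancy; the theory of $n$-complements, which reduces the computation of $\mld(X)$ to a bounded family of pairs; and a precise classification of the divisorial valuation $E$ computing $\mld(X)$, typically via weighted blow-ups in an étale or formal neighborhood of the singular point. The present paper implements exactly this program for $d=3$ and $a=1$: assuming a non-canonical threefold $X$ with $\mld(X)>\frac{12}{13}$, one extracts a prime divisor $E$ realizing the mld, analyzes the divisorial contraction containing $E$, and reduces by explicit weighted blow-up computations to a contradiction with $a(E,X)\leq \frac{12}{13}$.

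The hardest step will be establishing sufficiently many gaps to cover all accumulation points and proving that the candidate set $\mathcal{A}_d$ is itself well-ordered from above, which is a meta-level ACC statement whose proof in dimension three is only now becoming feasible thanks to bounds such as the $\frac{12}{13}$ gap here. A further serious obstacle is the ACC of mlds near $0$: via the canonical bundle formula and connectedness principles, this is equivalent to ACC of log canonical thresholds in dimension $d$, which is itself a theorem of Hacon--McKernan--Xu whose proof uses the full force of the minimal model program. I expect that even in dimension three, a complete proof of Conjecture \ref{conj: acc mld no boundary} will require combining the present paper's optimal gap with a long sequence of analogous gap results (one for each accumulation point below $1$), together with global boundedness inputs from BAB and complements. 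The present paper should be viewed as proving the most delicate instance of such a gap, the one nearest to the smooth locus.
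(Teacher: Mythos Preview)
The statement you are addressing is a \emph{conjecture}, not a theorem: the paper does not prove it, and in dimension $d\geq 3$ it remains open. What the paper actually establishes is only the single gap statement Theorem \ref{thm: 12/13}, namely that in dimension $3$ there is no $\mld(X)$ in the interval $\big(\frac{12}{13},1\big)$. Your write-up is not a proof but a research programme, and you yourself concede this in the last paragraph. So there is no ``paper's own proof'' to compare against.

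Beyond that, several steps of your outline are either circular or themselves open. Identifying the set $\mathcal{A}_d$ of accumulation points and then proving a gap around each is not a reduction: knowing $\mathcal{A}_d$ together with all the gaps \emph{is} the ACC, so you have only restated the conjecture. Your claim that $\mld(X)\leq d$, with $a=d$ the ``smooth'' accumulation point, is off for the global invariant used here: with the paper's definition every prime divisor on $X$ itself has $a(E,X)=1$, so $\mld(X)\leq 1$ for every $X$ and the relevant accumulation is at $1$, not at $d$. The assertion that ACC for mlds near $0$ is equivalent to ACC for log canonical thresholds is not justified and, in any case, would not handle the bulk of the interval. Finally, invoking BAB and $n$-complements does not by itself produce gap statements of the required form; the paper's actual argument for the single gap at $1$ goes through Jiang's reduction to five-dimensional cyclic quotient singularities and a delicate, computer-assisted analysis of those, none of which appears in your sketch.
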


Conjecture \ref{conj: acc mld no boundary} is known for curves and surfaces by Alexeev \cite{Ale93} and Shokurov \cite{Sho94}, and for toric varieties by Borisov \cite{Bor97} and Ambro \cite{Amb06}. For other related results, we refer the readers to \cite{Sho96,Sho00,Sho04,Kaw11,Kaw14,Kaw15,MN18,Nak16,Kaw18,Liu18,HLS19,CH20,HL20}. 

In particular, by classification of terminal threefold singularities, Conjecture \ref{conj: acc mld no boundary} is well-known for canonical threefolds. However, for non-canonical singularities, Conjecture \ref{conj: acc mld no boundary} had been widely open in dimension $\geq 3$ for decades. In \cite{Jia19}, Jiang shows the following result, which is the first step towards proving Conjecture \ref{conj: acc mld no boundary} for threefold non-canonical singularities:

\begin{thm}[{\cite[Theorem 1.3]{Jia19}}]\label{thm: jiang3dimmld}
There exists a positive real number $\delta$, such that for any normal $\mathbb Q$-Gorenstein variety $X$ of dimension $3$, if $\mld(X)<1$, then $\mld(X)\leq 1-\delta$.
\end{thm}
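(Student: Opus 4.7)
The plan is to argue by contradiction, extract the divisor computing the mld, and invoke the boundedness of local $N$-complements in dimension three. Suppose, for contradiction, that there exists a sequence of $\mathbb{Q}$-Gorenstein threefold germs $(X_n\ni x_n)$ with $\mld(x_n, X_n) = 1-\epsilon_n$ and $0<\epsilon_n\to 0$. For each $n$, choose a prime divisor $E_n$ over $X_n$ centered at $x_n$ with $a(E_n, X_n) = 1-\epsilon_n$; note that $X_n$ is automatically klt near $x_n$ since $\mld(x_n, X_n)>0$.

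First I would extract $E_n$ by a plt blow-up. Using the existence of divisorial extractions of a single divisor in dimension three (via the MMP of Birkar--Cascini--Hacon--McKernan: one adds a small multiple of $E_n$ to the boundary on a log resolution, then runs the associated MMP over $X_n$), construct a projective birational morphism $\pi_n:Y_n\to X_n$ whose only exceptional divisor is $E_n$, with $(Y_n, E_n)$ plt and $-E_n$ ample over $X_n$. The discrepancy formula reads $K_{Y_n} = \pi_n^*K_{X_n} - \epsilon_n E_n$, so $(Y_n, E_n)\to (X_n\ni x_n)$ is of Fano type over $x_n$.

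Next I would apply Birkar's theorem on boundedness of local $N$-complements in dimension three: there is a universal integer $N$, depending only on the dimension, and an effective $\mathbb{Q}$-divisor $\Theta_n\geq 0$ on $Y_n$, such that $(Y_n, E_n+\Theta_n)$ is lc near $\pi_n^{-1}(x_n)$ and $N(K_{Y_n}+E_n+\Theta_n)\sim 0$ over a neighborhood of $x_n$. Pushing forward, $\Delta_n^+:=\pi_{n*}\Theta_n$ gives an $N$-complement $(X_n,\Delta_n^+)$ of $(X_n, 0)$ near $x_n$, and $E_n$ is an lc place of this complement. Thus $a(E_n,X_n,\Delta_n^+) = 0$, whereas $a(E_n,X_n) = 1-\epsilon_n$; subtracting these equalities gives $\mult_{E_n}(\pi_n^*\Delta_n^+) = 1-\epsilon_n$. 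Since $N\Delta_n^+$ is Cartier on $X_n$, the divisor $N\pi_n^*\Delta_n^+$ is Cartier on $Y_n$, so its coefficient along the prime divisor $E_n$ is a non-negative integer. Hence $1-\epsilon_n \in \tfrac{1}{N}\mathbb{Z}_{\geq 0}$, which forces $\epsilon_n\geq 1/N$ and contradicts $\epsilon_n\to 0$. The theorem then holds with $\delta=1/N$.

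The main obstacle is the invocation of the boundedness of local $N$-complements for threefolds, which is a very deep theorem requiring, among other things, boundedness of Fano-type varieties; without it the entire strategy collapses. A secondary but nontrivial point is the construction of the plt blow-up $\pi_n$ extracting precisely the divisor $E_n$ in full generality, which requires a careful setup of the MMP and the fact that any divisor with log discrepancy strictly less than $1$ over a klt threefold can be realized as the exceptional divisor of a single divisorial contraction. Once these two ingredients are in place, the rest of the argument is bookkeeping with the discrepancy formula.
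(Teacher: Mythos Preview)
Your strategy via bounded $N$-complements is natural, but there is a genuine gap in the final step. After producing the $N$-complement $(X_n,\Delta_n^+)$ with $N(K_{X_n}+\Delta_n^+)\sim 0$ near $x_n$, you assert that ``$N\Delta_n^+$ is Cartier on $X_n$'' and hence that $N\pi_n^*\Delta_n^+$ has integer multiplicity along $E_n$. But the complement condition only makes $N(K_{X_n}+\Delta_n^+)$ Cartier; it says nothing about $NK_{X_n}$ by itself. Unless the Gorenstein index of $X_n$ at $x_n$ divides $N$, there is no reason for $N\Delta_n^+$ to be Cartier, and therefore no reason for $\mathrm{ord}_{E_n}(\pi_n^*\Delta_n^+)=1-\epsilon_n$ to lie in $\tfrac{1}{N}\mathbb Z$. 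This is not a technicality that can be patched locally: bounding the Cartier index of $K_X$ for klt threefold germs with $\mld$ close to $1$ is essentially as hard as the gap statement you are trying to prove, so the argument is circular as written. (Equivalently: from the complement you only get $a(E_n,X_n,\Delta_n^+)\in\tfrac{1}{N}\mathbb Z$, which is vacuous since this number is $0$; you cannot separate off $a(E_n,X_n)$ without controlling the index of $K_{X_n}$.)

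For comparison, the paper does not reprove this theorem but cites Jiang, whose method it also summarizes and reuses in the proof of the sharper Theorem~\ref{thm: 12/13}. Jiang's route is entirely different and specific to dimension three. One first reduces to an \emph{extremely non-canonical} germ, i.e.\ one admitting exactly one exceptional divisor of log discrepancy $<1$; this forces the index-one cover to be an isolated cDV hypersurface singularity $\{f=0\}\subset\mathbb A^4$, so that $(x\in X)$ is a hyperquotient $(\{f=0\})/\bm{\mu}_r$ with weights $(a_1,a_2,a_3,a_4)$ on the coordinates and weight $e$ on $f$. To this data one attaches the $5$-dimensional cyclic quotient $\tfrac{1}{r}(a_1,a_2,a_3,a_4,-e)$ and shows its mld is $<2$ whenever $\mld(x,X)<1$. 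The gap then follows because the set of mlds of cyclic quotient singularities in fixed dimension satisfies ACC (Borisov, Ambro), so $2$ is not an accumulation point from below. The hard classification input and the passage to a toric model are precisely what replace the index bound your complement argument is missing.
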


 \cite{Jia19} does not give an explicit positive lower bound of $\delta$. In this paper, we show that the optimal bound of $\delta$ is $\frac{1}{13}$.
 
\begin{thm}\label{thm: 12/13}
For any normal $\mathbb Q$-Gorenstein variety $X$ of dimension $3$, if $\mld(X)<1$, then $\mld(X)\leq\frac{12}{13}$. In particular, Conjecture \ref{conj: acc mld no boundary} holds when $d=3$ and $\mld(X)\geq\frac{12}{13}$.
\end{thm}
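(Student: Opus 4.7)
The plan is a proof by contradiction, reducing the threefold statement to the geometry of a klt log del Pezzo surface produced by divisorial extraction. Suppose for contradiction that some normal $\Qq$-Gorenstein threefold $X$ satisfies $12/13 < \mld(X) = a < 1$, realized at a closed point $x \in X$ by a divisor $E$ over $X$ with $a(E, X) = a$. The first step is to apply standard MMP-type arguments (in the spirit of Kawakita and Prokhorov's work on threefold divisorial contractions, together with Birkar's existence of plt blow-ups) to produce a projective birational morphism $f \colon Y \to X$ whose unique $f$-exceptional divisor is exactly $E$, so that $(Y, (1-a)E)$ is plt and $-E$ is $f$-ample.

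The second step is adjunction along $E$. From $K_Y = f^{*}K_X + (a-1) E$ and restricting $K_Y + E$ to $E$, one gets
\[
K_E + \Diff_E \;=\; a\, E|_E \;=\; -a\cdot (-E|_E),
\]
so $-(K_E + \Diff_E)$ is ample and $(E, \Diff_E)$ is a klt log del Pezzo surface. The coefficients of $\Diff_E$ lie in (a perturbation of) the standard set $\left\{ \tfrac{m-1}{m} : m \in \Zz_{>0} \right\}$, controlled by the local Gorenstein indices of $Y$ along $E$, and $\mld(E, \Diff_E)$ is read off from $\mld(X)$ by inversion of adjunction.

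The heart of the argument is to exploit $a \in (12/13, 1)$ on the surface side. The lower bound $\mld(X) \geq a$ translates into a very restrictive lower bound for $\mld(E, \Diff_E)$. Combining ACC for surface mlds (Alexeev--Shokurov) with classification results for klt log del Pezzo surfaces whose boundary has standard-set coefficients (Shokurov's complement theory together with the approach in \cite{Jia19}), the pair $(E, \Diff_E)$ is forced into a short explicit list. On each surface, numerical constraints from $(K_E + \Diff_E)^2$, adjunction along boundary curves, and the ampleness of $-(K_E + \Diff_E)$ yield a finite Diophantine system in $a$ and the index data; solving each case forces $a \leq 12/13$, contradicting the assumption.

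The main obstacle is the casework in this final step: enumerating the finite family of extremal configurations and, in each, tracking the precise Diophantine constraints that determine $a$. The denominator $13$ is expected to emerge from a single distinguished configuration (for instance a weighted projective plane with three torus-invariant boundary components whose coefficients realize $1 - a = 1/13$ through a small integral relation). The optimality claim is then completed by exhibiting an explicit cyclic quotient threefold singularity with $\mld$ equal to $12/13$, matching this extremal surface configuration.
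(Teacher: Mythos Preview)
Your approach is genuinely different from the paper's, and the core reduction step is not the one that actually works here. The paper does \emph{not} pass to a log del Pezzo surface via a plt blow-up. Instead, following \cite{Jia19}, it reduces to the case where $(x\in X)$ is a hyperquotient singularity $(\{f=0\}\subset\mathbb A^4)/\bm\mu_r$ with an isolated $cDV$ canonical cover, and then attaches to it a $5$-dimensional cyclic quotient singularity $\frac{1}{r}(a_1,a_2,a_3,a_4,-e)$ whose mld is close to $2$. The bound $\frac{12}{13}$ is extracted from a detailed, computer-assisted analysis of the floor-function equations $\sum_i\lfloor nx_i\rfloor=2n-2-c$ for this $5$-dimensional object (Theorem~\ref{thm: main theorem special 5dimcyc}), together with an explicit check (Lemma~\ref{lem: bad singularities not possible}) that the few surviving weight systems violate \cite[Rule I]{Jia19}.

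Your outline has real gaps at the two places where precision is required. First, the sentence ``the lower bound $\mld(X)\geq a$ translates into a very restrictive lower bound for $\mld(E,\Diff_E)$'' is not justified: inversion of adjunction compares $\mld$ of $(Y,(1-a)E)$ along $E$ with $\mld$ of $(E,\Diff_E)$, not $\mld(X)$ with $\mld(E,\Diff_E)$, and you have not explained how the hypothesis $a\in(12/13,1)$ produces a quantitative constraint on the surface pair. Second, and more seriously, klt log del Pezzo surfaces with standard-set boundary coefficients form a bounded but \emph{infinite} family; the assertion that ACC plus complement theory forces $(E,\Diff_E)$ into a ``short explicit list'' from which one can solve a finite Diophantine system is exactly the hard content you have not supplied. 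The fact that the paper's proof requires multiple computer enumerations over $5$-dimensional weight vectors (Theorems~\ref{thm: A41349}--\ref{thm: d213}) is strong evidence that no short classification-free argument on the surface side will pin down the constant $\tfrac{12}{13}$.
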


The following example shows that we cannot take $\delta>\frac{1}{13}$:

\begin{ex}\label{ex: 12/13}
The three-dimensional cyclic quotient singularity $(x\in X):=\frac{1}{13}(3,4,5)$ has minimal log discrepancy $\frac{12}{13}$.
\end{ex}

The proof of Theorem \ref{thm: 12/13} is based on a more precise study on $5$-dimensional toroidal singularities with the help of computer programs and by applying Jiang's proof of Theorem \ref{thm: jiang3dimmld} in \cite{Jia19}. More precisely,  \cite{Jia19} shows that any non-canonical singularity is associated with an extremely non-canonical singularity, and any three-dimensional extremely non-canonical singularity is associated with a cyclic quotient singularity of dimension $5$ with minimal log discrepancy less than $2$.  \cite{Jia19} only uses the fact that the minimal log discrepancies of $5$-dimensional cyclic quotient singularities do not have $2$ as an accumulation point from below. However, as the associated $5$-dimensional cyclic quotient singularities possess other properties, we may characterize their minimal log discrepancies more precisely. We have the following result, which is the key ingredient to prove Theorem \ref{thm: 12/13}:

\begin{thm}\label{thm: main theorem special 5dimcyc}
Let $(x\in X):=\frac{1}{r}(a_1,a_2,a_3,a_4,a_5)$ be a $5$-dimensional cyclic quotient singularity satisfying the following:
\begin{itemize}
\item $r,a_1,a_2,a_3,a_4,a_5$ are positive integers,
\item $a_i\leq r$ for any $i\in\{1,2,3,4,5\}$,
\item $\gcd(a_1,r)=\gcd(a_2,r)=\gcd(a_3,r)=1$, 
\item $\gcd(a_4,r)=\gcd(a_5,r)$,
\item $\gcd\left(\sum_{i=1}^5a_i,r\right)=1$,
\item $$2-\frac{1}{13}<\mld(x,X)=\frac{1}{r}\sum_{i=1}^5a_i<2,$$ and
\item one of the following holds: 
\begin{itemize}
\item $r\mid a_1+a_2+a_5$, or
\item $r\mid 2a_1+a_5$, or
\item $r\mid 2a_4+a_5$ and $\gcd(a_4,r)=\gcd(a_5,r)\leq 2$.
\end{itemize}
\end{itemize}
Then
\begin{align*}
    (r,\{a_1,a_2,a_3,a_4,a_5\})\in\big\{&(19,\{3,4,5,7,18\}),(17,\{2,3,5,7,16\}),\\
    &(14,\{3,5,13,2,4\})\big\}.
\end{align*}
\end{thm}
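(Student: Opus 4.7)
The mld of a cyclic quotient singularity $\frac{1}{r}(a_1,\dots,a_5)$ at the origin is
$$\mld(x,X) \;=\; \min_{1\le k\le r-1}\;\sum_{i=1}^{5}\Bigl\{\tfrac{k a_i}{r}\Bigr\},$$
where $\{\cdot\}$ denotes the fractional part. Writing $s=\sum_i a_i$, the hypothesis $\mld(x,X)=s/r$ says the minimum on the right is attained at $k=1$; it forces $a_i<r$ for every $i$, gives $\tfrac{25r}{13}<s<2r$, and yields the key inequality $\sum_i\{ka_i/r\}\ge s/r$ for every $k\in\{2,\dots,r-1\}$. My plan is a three-way case analysis on the divisibility alternative, using a handful of well-chosen test values $k$ to extract congruences that effectively bound $r$; a finite search then recovers the three listed tuples.

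First I would upgrade each divisibility hypothesis to an equality. Since $\gcd(a_1,r)=\gcd(a_2,r)=1$ forces $a_1,a_2<r$, the integer $a_1+a_2+a_5$ is a positive multiple of $r$ strictly less than $3r$, hence equals $r$ or $2r$; the value $2r$ is ruled out because it would give $a_3+a_4=s-2r<0$. So Case 1 becomes $a_1+a_2+a_5=r$, together with the sharp $a_3+a_4=s-r\in(12r/13,r)$. Case 2 similarly reduces to $2a_1+a_5\in\{r,2r\}$, and Case 3 to $2a_4+a_5\in\{r,2r\}$, the value $3r$ being excluded since it would force $a_4=a_5=r$ and thus $\gcd(a_4,r)=r$, contradicting $d:=\gcd(a_4,r)=\gcd(a_5,r)\le 2$ once $r>2$ (trivial small $r$ are ruled out separately).

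Next, each linear relation makes a corresponding partial sum of fractional parts integral. In Case 1, because $k(a_1+a_2+a_5)=kr\equiv 0\pmod r$, the quantity $\{ka_1/r\}+\{ka_2/r\}+\{ka_5/r\}$ is an integer, and coprimality of $a_1,a_2$ with $r$ forces it to lie in $\{1,2\}$. When it equals $1$, the mld-attainment inequality yields $\{ka_3/r\}+\{ka_4/r\}>12/13$. Choosing $k\equiv a_3^{-1}\pmod r$, legal since $\gcd(a_3,r)=1$, makes $\{ka_3/r\}=1/r$ and so forces $\{ka_4/r\}$ to be very close to $1$; this translates to a tight congruence between $a_3$ and $a_4$ modulo $r$. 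Iterating with $k\equiv a_i^{-1}\pmod r$ for $i=1,2,3$ and with a few small $k$'s (such as $k=2$) produces a system of modular equations giving an effective upper bound on $r$. Cases 2 and 3 are handled analogously using the respective relations $2a_1+a_5\equiv 0$ and $2a_4+a_5\equiv 0$ to produce a two-term partial fractional-part sum of known integer value. Once $r$ is bounded, a computer enumeration (which the paper advertises as part of the proof strategy) sweeps the remaining finitely many tuples and isolates exactly the three solutions.

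The main obstacle will be Case 3: here $a_4$ and $a_5$ share a nontrivial gcd with $r$ and so are not directly invertible modulo $r$, which means the useful test values $k$ must come from the coprime exponents $a_1,a_2,a_3$ and from $\gcd(s,r)=1$. The analysis must also branch on $d\in\{1,2\}$ and on the two sub-cases $2a_4+a_5\in\{r,2r\}$; organizing these branches so that the eventual search remains tractable and genuinely exhaustive is the technical heart of the argument.
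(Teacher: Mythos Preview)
Your outline diverges from the paper's and has a real gap at its load-bearing step. In Case~1 your mechanism is: pick $k\equiv a_3^{-1}\pmod r$, note that $\{ka_1/r\}+\{ka_2/r\}+\{ka_5/r\}$ is an integer, and conclude $\{ka_4/r\}>12/13-1/r$. But this only yields information when that integer is $1$ rather than $2$, and you give no argument for why it is $1$ at your chosen test values; when it is $2$ the mld inequality becomes vacuous. Even in the favourable case, the resulting constraint is that $a_3^{-1}a_4\bmod r$ lies in an interval of length about $r/13$, a window that scales with $r$; it is not explained how finitely many such windows combine into an \emph{absolute} upper bound on $r$. The sentence ``produces a system of modular equations giving an effective upper bound on~$r$'' is exactly where the argument must do its work, and it is left as an assertion. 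Cases~2 and~3 inherit the same difficulty.

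The paper's route is structurally different and does not attempt to bound $r$ through test multipliers. It passes to $x_i=a_i/r\in(0,1)$ and studies $f(n)=\sum_i\lfloor nx_i\rfloor$ for small $n$. The crucial device (Lemma~2.9) is to apply the mld inequality at $n$ \emph{and} at $r-n$, which sandwiches $f(n)$ between $2n-3-(n+1)\epsilon$ and $2n-2-(n-1)\epsilon$ where $\epsilon=2-\mld<\tfrac1{13}$; this two-sidedness, via the duality $n\leftrightarrow r-n$, is what your one-directional tests lack. For $r\ge 52$ it forces $f(n)=2n-3$ for many consecutive small $n$---equations purely in the $x_i$, independent of $r$---and a computer enumeration of the resulting floor-equation systems (Theorems~3.2--3.5 and~3.7) shows they are infeasible. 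A separate finite check (Theorem~3.1) handles $r\le 51$. The three-way divisibility alternative is not the organizing case split at all; the main case division is on $q=r/\gcd(a_4,r)$ and on the first $n$ with $f(n)=2n-4$, with the divisibility hypotheses and $\gcd(\sum a_i,r)=1$ entering only to dispose of a few residual possibilities.
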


The next theorem is a by-product of the proof of Theorem \ref{thm: main theorem special 5dimcyc}, which gives an explicit bound of the minimal log discrepancies of $5$-dimensional isolated cyclic quotient singularities away from $2$. As this bound is also optimal, we write it here.
\begin{thm}\label{thm: isolated5 1/19}
Let $(x\in X)$ be an isolated $5$-dimensional cyclic quotient singularity such that $\mld(x,X)<2$. Then $\mld(x,X)\leq 2-\frac{1}{19}$.
\end{thm}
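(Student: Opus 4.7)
The mld of an isolated 5-dimensional cyclic quotient singularity $\tfrac{1}{r}(a_1,\dots,a_5)$ with $\gcd(a_i,r)=1$ is given by the toric formula
\[
\mld(x,X)=\min_{1\le k\le r-1}\frac{1}{r}\sum_{i=1}^{5}\overline{ka_i},
\]
where $\overline{m}$ denotes the residue of $m$ modulo $r$ in $\{1,\dots,r-1\}$. The plan is to assume for contradiction that $\mld(x,X)\in(2-\tfrac{1}{19},\,2)$. After replacing $(a_1,\dots,a_5)$ by $(\overline{k^*a_1},\dots,\overline{k^*a_5})$ for the minimizer $k^*$---which yields an isomorphic singularity---I may assume $k=1$ attains the minimum, so $\mld(x,X)=\tfrac{1}{r}\sum_i a_i=2-\epsilon$ with $\epsilon\in(0,\tfrac{1}{19})$, and $\sum_i a_i$ is an integer in $(2r-\tfrac{r}{19},\,2r)$.

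Writing $S_n:=\sum_i\lfloor na_i/r\rfloor$, the minimality of $k=1$ applied at $k=n$ and at $k=r-n$ (using $\overline{-x}=r-\overline{x}$) translates to
\[
2n-3-(n+1)\epsilon\;\le\;S_n\;\le\;2n-2-(n-1)\epsilon,
\]
an interval of length $1+2\epsilon<21/19$. Because $\epsilon<\tfrac{1}{19}$, for every $n\in\{2,\dots,18\}$ both $(n\pm 1)\epsilon<1$, which forces $2n-3$ to be the unique integer in the interval; thus $S_n=2n-3$. The duality $S_n+S_{r-n}=\sum_i a_i-5$ (a direct consequence of $\sum_i\overline{(r-n)a_i}=5r-\sum_i\overline{na_i}$) extends this to $S_m=2m-3$ for every $m\in\{2,\dots,18\}\cup\{r-18,\dots,r-2\}$; matching the two determinations of $S_m$ on their overlap (when $r\le 36$), or using the integer constraint $\sum_i a_i\in(2r-\tfrac{r}{19},\,2r)$ (which pins $\sum_i a_i=2r-1$ when $r\le 38$), reduces the problem to $\sum_i a_i=2r-1$ (the main case), with at most a handful of sporadic values $\sum_i a_i\in\{2r-2,2r-3,\dots\}$ for larger $r$ that are handled analogously.

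For $r\le 19$ the contradiction is immediate: $\sum_i a_i=2r-1$ is incompatible with $\sum_i a_i>2r-\tfrac{r}{19}$, which requires $r>19$. For $r\ge 20$ the task reduces to ruling out 5-tuples $(a_1,\dots,a_5)$ with $\gcd(a_i,r)=1$ and $\sum_i a_i=2r-1$ for which $k=1$ attains the mld. The constraint $S_2=1$ forces exactly one $a_i$ (say $a_5$) into $(r/2,r)$, while $S_3=3,\,S_4=5,\,\dots,\,S_{18}=33$ progressively localize $a_1,\dots,a_4$ into increasingly fine subintervals of $(0,r/2)$. Following the strategy of Theorem~\ref{thm: main theorem special 5dimcyc}, I would use a computer-assisted enumeration to produce the finite list of combinatorial types consistent with these cascading constraints, and for each candidate exhibit some $k\ne 1$ (typically $k\in\{2,3,\dots,18\}$ or $k\equiv a_j^{-1}\pmod r$) at which $\sum_i\overline{ka_i}<\sum_i a_i$.

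The main obstacle is precisely this systematic elimination of the surviving finite family of candidate tuples, mirroring the algorithmic case analysis used in the proof of Theorem~\ref{thm: main theorem special 5dimcyc}; the sharp window $\{2,\dots,18\}$ on which $S_n=2n-3$ is forced arises exactly because $\epsilon$ approaches $\tfrac{1}{19}$ from below, and beyond $n=18$ the interval above ceases to be short enough to isolate a single integer. The optimality of the bound is confirmed by the example $\tfrac{1}{19}(3,4,5,7,18)$, for which $\mld=37/19=2-\tfrac{1}{19}$.
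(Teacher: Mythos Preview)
Your core approach matches the paper's: reduce to the case where $k=1$ attains the minimum (the paper's Lemma~\ref{lem: reduce cyc sing}), derive the two-sided bound on $S_n$ (Lemma~\ref{lem: main lemma using trick}), and conclude $S_n=2n-3$ for all $n\in[2,18]$ (Lemma~\ref{lem: main lemma delta odd}(2) with $c=1$). The paper then finishes in one line by invoking the computer-verified Theorem~\ref{thm: dn1218} (implemented as Theorem~\ref{thm: 17equations no solution}): the system $\sum_{i=1}^5\lfloor nx_i\rfloor=2n-3$ for $n=2,\dots,18$ already has no solution in \emph{real} $(x_1,\dots,x_5)\in[0,1)^5$, so no $r$-dependent analysis is needed at all.

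Consequently your duality discussion and the case split on $\sum_i a_i$ are unnecessary detours, and they contain a slip: the identity $S_n+S_{r-n}=\sum_i a_i-5$ gives $S_{r-n}=2(r-n)-2-(2r-\sum_i a_i)$, which equals $2(r-n)-3$ only when $\sum_i a_i=2r-1$, so your claimed extension of $S_m=2m-3$ to $m\in\{r-18,\dots,r-2\}$ does not hold in general. Likewise, your proposed endgame of ``for each surviving candidate exhibit some $k\ne1$'' is superseded by the paper's single real-variable enumeration, which handles all $r$ simultaneously and renders the value of $\sum_i a_i$ irrelevant.
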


\begin{ex}
The $5$-dimensional cyclic quotient singularity $(x\in X):=\frac{1}{19}(3,4,5,7,18)$ has minimal log discrepancy $2-\frac{1}{19}$.
\end{ex}

\noindent\textit{Idea of the proof of Theorem \ref{thm: 12/13}}. Following the proof of Jiang in \cite{Jia19}, to prove Theorem \ref{thm: 12/13}, we only need to study the behavior of special cyclic quotient singularities of dimension $5$ with minimal log discrepancies $<2$ but sufficiently close to $2$. That is, we only need to prove Theorem \ref{thm: main theorem special 5dimcyc}.

Suppose that $(x\in X)=\frac{1}{r}(a_1,a_2,a_3,a_4,a_5)$ is a cyclic quotient singularity of dimension $5$ and $x_i:=\frac{a_i}{r}$ for every $i$. Our key observation is that, when $\mld(x,X)$ is sufficiently close to $2$, the function $f:\mathbb N^+\rightarrow\mathbb N$ given by
$$f(n):=\sum_{i=1}^5\lfloor nx_i\rfloor$$
has some very nice properties. In particular, we want to consider equations of the form
$$f(n)=2n-2-c.$$
If $f(n)=2n-2-c$, we say that \emph{condition $\mathcal{D}(n,c)$} holds. 

By using some tricky but elementary calculations, we know that $\mathcal{D}(n,c)$ holds for a lot of specific $n$ and $c$ at the same time. The precise behavior we need for $f(n)$ and $\mathcal{D}(n,c)$, as well as an auxiliary inequality $\mathcal{C}(n)$ which is also related to $f(n)$, are illustrated in Lemma \ref{lem: main lemma delta odd}. Finally, we use computer algorithm to enumerate the solutions $(x_1,x_2,x_3,x_4,x_5)$ for all $\mathcal{D}(n,c)$. After multiple enumerations, we eventually get a contradiction by showing that solutions for the required equations do not exist.

\medskip

\noindent\textit{Sketch of the paper}. The main of this paper involves elementary but complicated calculations on special $5$-dimensional cyclic quotient singularities. In Section 2, we give some elementary lemmas, introduce the auxiliary function $f(n)$, the auxiliary equations $\mathcal{D}(n,c)$ and the auxiliary inequalities $\mathcal{C}(n)$. In particular, we prove two technical but important lemmas (Lemma \ref{lem: main lemma using trick} and Lemma \ref{lem: main lemma delta odd}), which will be repeatedly used in the rest of the proof. In Section 3, we state the ``mathematical version" of several theorems we obtained by computer programs. In Section 4, we prove Theorem \ref{thm: main theorem special 5dimcyc}. In Section 5, we prove Theorem \ref{thm: isolated5 1/19}. In Section 6, we prove Theorem \ref{thm: 12/13}. In the appendix, we provide algorithms used to prove the theorems in Section 3 and give an explanation of a typical one of them. For detailed programs of the algorithms, we refer the readers to the arXiv version of this paper \cite{LX19}.

\medskip

%\begin{center}
%	joint work with Liudan Xiao\footnote[1]{As of May 2019, L.X. works  as a software engineer for Google LLC at 1600 Amphitheater Parkway, Mountain View, California, U.S.A. }\\
%	E-mail: liudan.xiao.pku@gmail.com
%\end{center}

\section{Prerequisites}

\subsection{Elementary definitions and lemmas}

\begin{defn}\label{defn:iq}
For any integer $q\geq 2$, we define
$\Ii_q:=\left\{n\in\mathbb N^+|\ q\nmid n\right\}.$
\end{defn}

\begin{defn}
Let $m,r$ be two positive integers, $a_1,\dots,a_m$, $a_1',\dots,a_m'$ and $e,e'$ integers, and $x_1,\dots,x_m,x_1',\dots,x_m'$ real numbers. We write
\begin{itemize}
    \item $$x_1\equiv x_1' (\Mod\ \Zz)$$ 
    if $x_1-x_2\in\Zz$,
    \item $$(x_1,\dots,x_m)\equiv (x_1',\dots,x_m') (\Mod\ \Zz)$$
    if $x_i\equiv x_i' (\Mod\ \Zz)$ for every integer $i\in [1,m]$, and
    \item $$(\left\{a_1,\dots,a_m\right\},e)\equiv (\left\{a_1',\dots,a_m'\right\},e') (\Mod\ r)$$ 
    if 
\begin{itemize}
    \item possibly reordering $a_1',\dots,a_m'$, $a_i\equiv a_i' (\Mod\ r)$ for every integer $i\in [1,m]$, and
    \item $e\equiv e' (\Mod\ r)$.
\end{itemize}
\end{itemize}
\end{defn}

The next lemma is elementary so we omit the proof.

\begin{lem}\label{lem: round down plus round up}
For any $a,b\in\Rr$ such that $a+b\in\mathbb Z$,
$a+b=\lfloor a\rfloor+\lceil b\rceil.$
\end{lem}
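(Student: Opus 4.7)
The plan is a one-line case split on whether $a\in\Zz$. Since $a+b\in\Zz$, I first observe that $a\in\Zz$ if and only if $b\in\Zz$, because $b=(a+b)-a$ and vice versa.

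If both $a,b\in\Zz$, then $\lfloor a\rfloor=a$ and $\lceil b\rceil=b$, so the identity is immediate.

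If neither is an integer, I would use the strict bounds $\lfloor a\rfloor<a<\lfloor a\rfloor+1$ and $\lceil b\rceil-1<b<\lceil b\rceil$. Adding yields
\[
\lfloor a\rfloor+\lceil b\rceil-1<a+b<\lfloor a\rfloor+\lceil b\rceil+1.
\]
Since $a+b$ is an integer and the only integer strictly between the two consecutive integers $\lfloor a\rfloor+\lceil b\rceil-1$ and $\lfloor a\rfloor+\lceil b\rceil+1$ is $\lfloor a\rfloor+\lceil b\rceil$ itself, the claim follows. There is no genuine obstacle here; this is the standard ``split an integer sum into floor of one summand plus ceiling of the complementary summand'' identity, and it is recorded in this form purely to streamline the forthcoming manipulations of the auxiliary function $f(n)=\sum_{i=1}^5\lfloor nx_i\rfloor$ and of the conditions $\mathcal{D}(n,c)$.
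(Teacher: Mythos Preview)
Your argument is correct. The paper itself omits the proof of this lemma entirely, calling it elementary, so there is nothing to compare; your case split and the squeeze between $\lfloor a\rfloor+\lceil b\rceil-1$ and $\lfloor a\rfloor+\lceil b\rceil+1$ are a perfectly standard and complete justification.
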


\subsection{Special cyclic quotient singularities}

In this subsection, we introduce special cyclic quotient singularities of dimension $5$ we need to study in this paper.

The next lemma is based on basic toric geometry. We refer the readers to \cite{Amb06} for a proof.
\begin{lem}\label{lem: mld of toric}
Let $(x\in X)=\frac{1}{r}(a_1,\dots,a_d)$ be a cyclic quotient singularity, then $$\mld(x,X)=\min_{1\leq j\leq r}\sum_{i=1}^d\left(1+\frac{ja_i}{r}-\lceil\frac{ja_i}{r}\rceil\right).$$
\end{lem}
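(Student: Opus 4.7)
The plan is to apply the standard toric description of cyclic quotient singularities together with the well-known formula for log discrepancies of torus-invariant divisorial valuations. First, I would realize $(x \in X) = \frac{1}{r}(a_1, \ldots, a_d)$ as the affine toric variety $X_\sigma$, where $\sigma = \mathbb{R}_{\geq 0}^d$ is the standard positive orthant with ray generators $e_1, \ldots, e_d$ and the cocharacter lattice is
$$N := \mathbb{Z}^d + \mathbb{Z} \cdot \frac{1}{r}(a_1, \ldots, a_d).$$
The closed point $x$ is the torus-fixed point, and divisorial valuations over $X$ with center $\{x\}$ correspond bijectively to primitive lattice vectors $v$ lying in the open cone $\sigma^\circ \cap N$.

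Next I would invoke the toric log-discrepancy formula: writing $v = \sum_{i=1}^d c_i e_i$ with $c_i > 0$, one has $a(E_v, X) = \sum_i c_i$, which is the value at $v$ of the piecewise linear support function of $-K_X = \sum_i D_{e_i}$. Every $v \in N \cap \sigma^\circ$ has the form $v = \frac{1}{r}(b_1, \ldots, b_d)$ with $b_i$ positive integers and $(b_1, \ldots, b_d) \equiv j(a_1, \ldots, a_d) \pmod r$ for a unique $j \in \{0, 1, \ldots, r-1\}$. For fixed $j$, the minimum of $\sum b_i/r$ is obtained by choosing each $b_i$ to be the smallest positive integer congruent to $j a_i \pmod r$; a short case analysis depending on whether $r \mid j a_i$, using Lemma \ref{lem: round down plus round up}, shows that this minimal $b_i$ satisfies
$$\frac{b_i}{r} = 1 + \frac{j a_i}{r} - \left\lceil \frac{j a_i}{r} \right\rceil.$$
Summing and minimizing over $j \in \{1, 2, \ldots, r\}$ (with $j = r$ playing the role of the class $j \equiv 0$) yields the right-hand side of the lemma as a lower bound for $\mld(x, X)$.

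Finally, I need to check that this lower bound is attained by a genuine primitive lattice vector, so that the minimum is actually realized. If the minimizer $v_j$ in class $j$ fails to be primitive, write $v_j = m v'$ with $v' \in N \cap \sigma^\circ$ primitive and $m \geq 2$; then $v'$ lies in some class $j'$ with $m j' \equiv j \pmod r$, and its associated sum $\sum c_i' = (\sum c_i)/m$ is strictly smaller, so the minimum attached to class $j'$ is at most that of class $j$. Hence the overall minimum over $j$ is automatically achieved at a primitive vector, and equality with $\mld(x,X)$ holds. No substantive obstacle is expected; the proof is essentially a translation of classical toric geometry into the language of log discrepancies, and the step requiring the most care is the clean bookkeeping around the floor/ceiling identity, especially the edge case $r \mid j a_i$.
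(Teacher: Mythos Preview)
The paper does not give its own proof of this lemma; it simply says the result ``is based on basic toric geometry'' and refers to \cite{Amb06}. Your outline is the standard toric argument and is essentially what one finds in Ambro's work, so in spirit you are aligned with the paper's intended justification.

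There is, however, one genuine inaccuracy in your sketch. You assert that divisorial valuations over $X$ with center $\{x\}$ correspond bijectively to primitive lattice vectors in $\sigma^\circ\cap N$. That is false: only \emph{torus-invariant} divisorial valuations are parametrized this way, and there are many non-toric divisors over $X$ centered at $x$. What makes the lemma true is the separate (standard but nontrivial) fact that on a toric variety the minimal log discrepancy is always realized by a torus-invariant divisor; one sees this, for instance, by computing $\mld(x,X)$ on a \emph{toric} log resolution $Y\to X$, where every exceptional divisor is toric, and using that mlds can be read off from any log resolution. Once that reduction is in place, your minimization over residue classes $j$, the identification $b_i/r=1+\tfrac{ja_i}{r}-\lceil\tfrac{ja_i}{r}\rceil$ (including the edge case $r\mid ja_i$), and your primitivity argument are all correct. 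So the fix is small: replace the bijection claim with the reduction to toric divisors, or simply cite \cite{Amb06} for that step as the paper does.
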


\begin{defn}[Set of special cyclic quotient singularities]
We define several sets of $5$-dimensional cyclic quotient singularities for technical purposes. For every positive integer $r$, we let
\begin{itemize}
    \item $\mathcal{A}_r(1)$ be the set of all $5$-dimensional cyclic quotient singularities $(x\in X)$ of the form $\frac{1}{r}(a_1,a_2,a_3,a_4,a_5)$, such that
\begin{itemize}
    \item $a_1,a_2,a_3,a_4,a_5$ are positive integers,
    \item $a_i<r$ for each $i$, and
    \item $\mld(x,X)<2$,
\end{itemize}
\item $\mathcal{A}_r(2)$ the set of $(x\in X)=\frac{1}{r}(a_1,a_2,a_3,a_4,a_5)$ in $\mathcal{A}_r(1)$, such that
\begin{itemize}
    \item $\gcd(a_1,r)=\gcd(a_2,r)=\gcd(a_3,r)=1$, and
    \item $\gcd(a_4,r)=\gcd(a_5,r)$, 
\end{itemize}
\item $\mathcal{A}_r(3)$ the set of $(x\in X)=\frac{1}{r}(a_1,a_2,a_3,a_4,a_5)$ in $\mathcal{A}_r(2)$, such that $$\gcd\left(\sum_{i=1}^5a_i,r\right)=1,$$
\item $\mathcal{A}_r(4)$ the set of $(x\in X)=\frac{1}{r}(a_1,a_2,a_3,a_4,a_5)$ in $\mathcal{A}_r(3)$, such that one of the following holds:
    \begin{itemize}
        \item $r\mid a_1+a_4+a_5$, or
        \item $r\mid 2a_4+a_5$, or
        \item $r\mid 2a_1+a_5$ and $\gcd(a_4,r)\leq 2$,
\end{itemize}
and
\item $\mathcal{A}_r(5)$ the set of isolated cyclic quotient singularities $(x\in X)$ in $\mathcal{A}_r(1)$ (which is clear that $\mathcal{A}_r(5)\subset\mathcal{A}_r(2)$).
\end{itemize}
For every integer $i\in [1,5]$ and positive real number $\epsilon$, we define
\begin{itemize}
\item $\bar{\mathcal{A}}_r(i)$ to be the set of $(x\in X)=\frac{1}{r}(a_1,a_2,a_3,a_4,a_5)$ in $\mathcal{A}_r(i)$ such that $$\mld(x,X)=\frac{1}{r}\sum_{j=1}^5a_j,$$
\item $\mathcal{A}_r(i,\epsilon)$ to be the set of $(x\in X)\in\mathcal{A}_r(i)$ such that $\mld(x,X)>2-\epsilon$,
\item $\bar{\mathcal{A}}_r(i,\epsilon):=\mathcal{A}_r(i,\epsilon)\cap\bar{\mathcal{A}}_r(i)$, and
\item $$
\mathcal{A}(i)=\bigcup_{r=1}^{+\infty}\mathcal{A}_r(i),\bar{\mathcal{A}}(i)=\bigcup_{r=1}^{+\infty}\bar{\mathcal{A}}_r(i),
\mathcal{A}(i,\epsilon)=\bigcup_{r=1}^{+\infty}\mathcal{A}_r(i,\epsilon),
\bar{\mathcal{A}}(i,\epsilon)=\bigcup_{r=1}^{+\infty}\bar{\mathcal{A}}_r(i,\epsilon).$$
\end{itemize}
    We remark that if $\left\{a_1,a_2,a_3,a_4,a_5\right\}=\left\{a_1',a_2',a_3',a_4',a_5'\right\}$, we always identify $\frac{1}{r}(a_1,a_2,a_3,a_4,a_5)$ with $\frac{1}{r}(a_1',a_2',a_3',a_4',a_5')$ if they both belong to $\mathcal{A}(1)$.
\end{defn}

\begin{rem}
Theorem \ref{thm: main theorem special 5dimcyc} and is Theorem \ref{thm: isolated5 1/19} can be rephrased as $$\bar{\mathcal{A}}\left(4,\frac{1}{13}\right)=\left\{\frac{1}{19}(3,4,5,7,18),\frac{1}{17}(2,3,5,7,16),\frac{1}{14}(3,5,13,2,4)\right\}$$
and $$\mathcal{A}\left(5,\frac{1}{19}\right)=\emptyset$$ respectively.
\end{rem}

\subsection{Associated functions, conditions, and constants}

In this subsection, for any cyclic quotient singularity $(x\in X)=\frac{1}{r}(a_1,a_2,a_3,a_4,a_5)\in\mathcal{A}(1)$, we define an associated function $f(n)$ and study its properties. In particular, we introduce auxiliary equations $\mathcal{D}(n,c)$ and inequalities $\mathcal{C}(n)$ of $(x\in X)$ and study their properties.

\begin{defn}
 Let $q,c$ be two positive integers and $\bm{x}=(x_1,x_2,x_3,x_4,x_5)\in (0,1)^5$ a rational point. \begin{itemize}
     \item The \emph{associated function} of $\bm{x}$ is the function $f:\mathbb N^+\rightarrow\mathbb N$ given by $$f(n):=\sum_{i=1}^5\lfloor nx_i\rfloor$$
     \item If the minimal positive denominators of $x_4$ and $x_5$ both equal to $q$, then
     \begin{itemize}
    \item we say that \emph{condition $\mathcal{C}(n)$ holds} for $\bm{x}$ if 
    \begin{itemize}
        \item $n-1,n+1\in\Ii_q$, and
        \item $f(n-1)+5\leq f(n+1)$,
    \end{itemize}
    and
    \item we say that \emph{condition $\mathcal{D}(n,c)$ holds} for $\bm{x}$ if 
    \begin{itemize}
        \item $n\in\Ii_q$, and
        \item  $f(n)=2n-2-c$.
    \end{itemize}
\end{itemize}
 \end{itemize}
\end{defn}

\begin{defn}
Let $(x\in X)=\frac{1}{r}(a_1,a_2,a_3,a_4,a_5)\in\mathcal{A}(1)$ be a cyclic quotient singularity. 
\begin{itemize}
    \item The \emph{associated point} of $(x\in X)$ is defined as the point $$\bm{x}:=\left(\frac{a_1}{r},\frac{a_2}{r},\frac{a_3}{r},\frac{a_4}{r},\frac{a_5}{r}\right)\in (0,1)^5$$
    \item  The \emph{associated function} $f:\mathbb N^+\rightarrow\mathbb N$ of $(x\in X)$ is defined as the associated function of $\bm{x}$.
    \item If $(x\in X)\in\mathcal{A}(2)$, then
    \begin{itemize}
        \item the \emph{associated denominator} of $(x\in X)$ is defined as 
        $$q(x\in X):=\frac{r}{\gcd(a_4,r)}.$$
        \item we say that \emph{condition $\mathcal{C}(n)$ holds} for $(x\in X)$ if condition $\mathcal{C}(n)$ holds for $\bm{x}$, and
         \item we say that \emph{condition $\mathcal{D}(n,c)$ holds} for $(x\in X)$ if condition $\mathcal{D}(n,c)$ holds for $\bm{x}$.
    \end{itemize}
\end{itemize}
\end{defn}

The next lemma is the key lemma of our proof, which describes the behavior of $f(n)$ when $\mld(x,X)$ is sufficiently close to $2$.

\begin{lem}\label{lem: main lemma using trick}
Let $c$ be a positive integer and $\epsilon$ a positive real number. Let $(x\in X)=\frac{1}{r}(a_1,\dots, a_5)\in\bar{\mathcal{A}}(2)$ be a cyclic quotient singularity such that $\mld(x,X)=2-\epsilon$. Let $f:\mathbb N^+\rightarrow\mathbb N$ the associated function of $(x\in X)$ and $q:=q(x\in X)$ the associated denominator of $(x\in X)$. Then for every integer $n\in [2,r-1]\cap\Ii_q$,
 $$2n-3-(n+1)\epsilon\leq f(n)\leq2n-2-(n-1)\epsilon.$$
\end{lem}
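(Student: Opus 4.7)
The plan is to combine Ambro's formula (Lemma \ref{lem: mld of toric}) with a duality identity relating $f(n)$ and $f(r-n)$. Writing $x_i := a_i/r$, the hypothesis $(x\in X) \in \bar{\mathcal{A}}(2)$ gives $\sum_{i=1}^{5} x_i = \mld(x,X) = 2-\epsilon$. The crucial preliminary observation is that whenever $n \in [1,r-1] \cap \Ii_q$, the assumptions $\gcd(a_1,r) = \gcd(a_2,r) = \gcd(a_3,r) = 1$ together with $q \nmid n$ (where $q = r/\gcd(a_4,r) = r/\gcd(a_5,r)$) force $r \nmid n a_i$ for every $i \in \{1,\dots,5\}$. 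This clean divisibility property drives both of the desired bounds.

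For the upper bound I would apply Lemma \ref{lem: mld of toric} with $j = n$: since $r \nmid n a_i$, each summand $1 + n x_i - \lceil n x_i \rceil$ reduces to $n x_i - \lfloor n x_i \rfloor$, so the full sum equals $n(2-\epsilon) - f(n)$. Lemma \ref{lem: mld of toric} then yields $2-\epsilon \leq n(2-\epsilon) - f(n)$, which rearranges to $f(n) \leq (n-1)(2-\epsilon) = 2n-2-(n-1)\epsilon$. For the lower bound I would first derive a duality identity. Applying Lemma \ref{lem: round down plus round up} to the pair $n x_i, (r-n) x_i$ (whose sum $a_i$ is an integer) gives $a_i = \lfloor n x_i \rfloor + \lceil (r-n) x_i \rceil$, and since $(r-n) x_i \notin \mathbb{Z}$ the ceiling equals $\lfloor (r-n) x_i \rfloor + 1$. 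Summing over $i$ and using $\sum_i a_i = r(2-\epsilon)$ yields
$$f(n) + f(r-n) = r(2-\epsilon) - 5.$$
If $n \in [2,r-1] \cap \Ii_q$, then $m := r-n$ lies in $[1,r-2] \cap \Ii_q$ (using $q \mid r$ and $q \nmid n$), so the upper bound just proved applies to $m$. Substituting $f(r-n) \leq (r-n-1)(2-\epsilon)$ into the duality identity gives $f(n) \geq 2n-3-(n+1)\epsilon$.

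The main thing to get right is the universal non-divisibility $r \nmid n a_i$, because without it Ambro's formula would pick up spurious $+1$ contributions and the duality identity would lose its clean constant $-5$. Once that is in place, both inequalities drop out by one-line rearrangements, and the boundary case $n = r-1$ (where $m = 1$ and $f(1) = 0$) is absorbed without any special treatment.
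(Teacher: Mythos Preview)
Your proof is correct and follows essentially the same approach as the paper's: both use Lemma \ref{lem: mld of toric} applied to $n$ to obtain the upper bound $f(n)\leq(n-1)(2-\epsilon)$, and then combine this (applied to $r-n$) with the identity coming from Lemma \ref{lem: round down plus round up} to obtain the lower bound. Your version is slightly more explicit in isolating the non-divisibility $r\nmid na_i$ and the duality identity $f(n)+f(r-n)=r(2-\epsilon)-5$, but the paper carries out the same computation inside a single chain of inequalities.
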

\begin{proof}
We define $x_i:=\frac{a_i}{r}$ for each $i$. By Lemma \ref{lem: mld of toric}, for any integer $n\in [1,r-1]$,
$$\sum_{i=1}^5(1+nx_i-\lceil nx_i\rceil)\geq\mld(x,X)=\sum_{i=1}^5x_i.$$
For any integer $n\in [1,r-1]\cap\Ii_q$, since $$\left\{nx_i\right\}=1+nx_i-\lceil nx_i\rceil,$$ 
we have
$$\sum_{i=1}^5\left\{nx_i\right\}\geq\sum_{i=1}^5x_i,$$
which implies that
$$\sum_{i=1}^5(n-1)x_i\geq\sum_{i=1}^5\lfloor nx_i\rfloor=f(n).$$
Since for any $n\in [1,r-1]\cap\Ii_q$, $r-n\in [1,r-1]\cap\Ii_q$, we have
$$\sum_{i=1}^5(r-n-1)x_i\geq f(r-n).$$
Since $(x\in X)\in\bar{\mathcal{A}}(2)$, by the construction of $\bar{\mathcal{A}}(2)$, 
$$2-\epsilon=\mld(x,X)=\sum_{i=1}^5x_i.$$ 
By Lemma \ref{lem: round down plus round up}, 
\begin{align*}
2n-2-(n-1)\epsilon&=\sum_{i=1}^5(n-1)x_i\geq f(n)=\sum_{i=1}^5(rx_i-\lceil (r-n)x_i\rceil)\\
&=\sum_{i=1}^5rx_i-\sum_{i=1}^5(f(r-n)+1)\\
&\geq\sum_{i=1}^5rx_i-\sum_{i=1}^5(r-n-1)x_i-5=\sum_{i=1}^5(n+1)x_i-5\\
&=(n+1)(2-\epsilon)-5=2n-3-(n+1)\epsilon.
\end{align*}
\end{proof}

We apply Lemma \ref{lem: main lemma using trick} to study the behavior of the equations $\mathcal{D}(n,c)$ and the inequalities $\mathcal{C}(n)$. The necessary results in the rest of the proof are listed out in the following lemma:

\begin{lem}\label{lem: main lemma delta odd}
Let $c$ be a positive integer and $\epsilon$ a positive real number. Let $(x\in X)=\frac{1}{r}(a_1,\dots, a_5)\in\bar{\mathcal{A}}(2)$ be a cyclic quotient singularity such that $\mld(x,X)=2-\epsilon$. Let $f:\mathbb N^+\rightarrow\mathbb N$ the associated function of $(x\in X)$ and $q:=q(x\in X)$ the associated denominator of $(x\in X)$. Then for every integer $n\in [2,r-1]\cap\Ii_q$, we have the following:

\begin{enumerate}
 \item  If $n<\frac{c}{\epsilon}-1$, then $f(n)\geq 2n-2-c.$
 \item If $\frac{c-1}{\epsilon}+1<n<\frac{c}{\epsilon}-1$, then $\mathcal{D}(n,c)$ holds.
\item If $\mathcal{D}(n,c)$ holds, then
\begin{enumerate}
\item $\frac{c-1}{n+1}\leq\epsilon\leq\frac{c}{n-1}$,
\item for every $m\in [2,r-1]\cap\Ii_q$ such that $m<\frac{c+1}{c}\cdot(n-1)-1$, we have
$f(m)\geq 2m-3-c,$ and
\item if $D(n',c+1)$ holds for some integer $n'\in [2,r-1]\cap\Ii_q$, then $\mathcal{D}(m,c+1)$ holds for every integer $m\in [2,r-1]\cap\Ii_q$ such that 
$n'+3\leq m<\frac{c+1}{c}\cdot(n-1)-1$.
\end{enumerate}
\item If $n+1<r$, then
\begin{enumerate}
    \item if $\mathcal{D}(n-1,c+1)$ and $\mathcal{D}(n+1,c)$ hold, then $\epsilon=\frac{c}{n}$, 
    \item if $\mathcal{C}(n)$ holds, then $n\epsilon\in\mathbb N^+$, and
    \item if $(x\in X)\in\bar{\mathcal{A}}(3)$, then $\mathcal{C}(n)$ does not hold.
\end{enumerate}
\end{enumerate}
\end{lem}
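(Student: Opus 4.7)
The plan is to deduce every part from Lemma \ref{lem: main lemma using trick}, which already gives the sandwich
$2n-3-(n+1)\epsilon \leq f(n)\leq 2n-2-(n-1)\epsilon$ on the relevant range, plus the basic fact that $f$ is integer-valued and $c$ is an integer.

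For (1), if $n<\frac{c}{\epsilon}-1$ then $(n+1)\epsilon<c$, so the lower bound of Lemma \ref{lem: main lemma using trick} yields $f(n)>2n-3-c$; integrality of $f(n)$ then upgrades this to $f(n)\geq 2n-2-c$. For (2), the hypothesis $n>\frac{c-1}{\epsilon}+1$ makes $(n-1)\epsilon>c-1$, and the upper bound gives $f(n)<2n-1-c$, hence $f(n)\leq 2n-2-c$ by integrality; combined with (1) we get equality, i.e.\ $\mathcal{D}(n,c)$. For (3)(a), plug $f(n)=2n-2-c$ into both inequalities of Lemma \ref{lem: main lemma using trick} and solve for $\epsilon$. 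For (3)(b), note that (3)(a) yields $\epsilon\leq\frac{c}{n-1}$, so $\frac{c+1}{\epsilon}\geq\frac{(c+1)(n-1)}{c}$; thus the assumption $m<\frac{c+1}{c}(n-1)-1$ implies $m<\frac{c+1}{\epsilon}-1$, and applying (1) with $c$ replaced by $c+1$ gives $f(m)\geq 2m-3-c$. Part (3)(c) is the same idea in reverse: the hypothesis $\mathcal{D}(n',c+1)$ together with (3)(a) gives $\frac{c}{\epsilon}\leq n'+1$, so the window $[n'+3,\frac{c+1}{c}(n-1)-1)$ lies inside $\bigl(\frac{c}{\epsilon}+1,\frac{c+1}{\epsilon}-1\bigr)$, and (2) applied with $c+1$ in place of $c$ produces $\mathcal{D}(m,c+1)$.

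For (4)(a), apply (3)(a) to $\mathcal{D}(n-1,c+1)$ to get $\epsilon\geq\frac{c}{n}$, and to $\mathcal{D}(n+1,c)$ to get $\epsilon\leq\frac{c}{n}$; intersection forces $\epsilon=\frac{c}{n}$. For (4)(b), Lemma \ref{lem: main lemma using trick} at $n-1$ and $n+1$ gives $f(n-1)\geq 2n-5-n\epsilon$ and $f(n+1)\leq 2n-n\epsilon$, so $f(n+1)-f(n-1)\leq 5$; combined with $\mathcal{C}(n)$ this forces both inequalities to be equalities, and integrality of $f(n\pm 1)$ forces $n\epsilon\in\mathbb{Z}$, hence $n\epsilon\in\mathbb{N}^+$ since $n,\epsilon>0$.

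Finally, (4)(c) is where the $\bar{\mathcal{A}}(3)$ hypothesis enters. Assume for contradiction that $\mathcal{C}(n)$ holds. Since $(x\in X)\in\bar{\mathcal{A}}(3)\subset\bar{\mathcal{A}}(2)$, we have $\epsilon=2-\mld(x,X)=\frac{2r-\sum_i a_i}{r}$, so $n\epsilon\in\mathbb{N}^+$ from (4)(b) means $r\mid n\sum_i a_i$. The defining condition $\gcd\bigl(\sum_i a_i,r\bigr)=1$ of $\bar{\mathcal{A}}(3)$ then forces $r\mid n$, which is impossible since $2\leq n\leq r-2$.

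The routine parts are (1)--(3), which are essentially bookkeeping on the inequalities of Lemma \ref{lem: main lemma using trick}. The one place where care is needed is (4)(b): the key observation that $\mathcal{C}(n)$ pins down \emph{both} bounds of Lemma \ref{lem: main lemma using trick} to equalities at $n\pm 1$ simultaneously, thereby forcing $n\epsilon$ to be an integer. Once that is in hand, (4)(c) is a one-line divisibility argument, and it is the bridge that ties the arithmetic condition $\gcd(\sum a_i,r)=1$ to the combinatorics of the associated function $f$.
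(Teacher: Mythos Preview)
Your proof is correct and follows essentially the same strategy as the paper: all parts are deduced from the two-sided estimate of Lemma~\ref{lem: main lemma using trick} together with integrality. Parts (1)--(3), (4)(a), and (4)(c) match the paper's argument almost verbatim. For (4)(b) you take a slightly more direct route than the paper: instead of writing $f(n\pm 1)=2(n\pm 1)-2-c_{1,2}$ for auxiliary positive integers $c_1,c_2$, applying (3)(a) to each, and then invoking (4)(a), you subtract the lower bound at $n-1$ from the upper bound at $n+1$ to get $f(n+1)-f(n-1)\leq 5$ directly, so that $\mathcal{C}(n)$ forces both bounds to be attained and hence $n\epsilon\in\mathbb{Z}$. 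This is a clean shortcut with the same content; the paper's version has the minor advantage of explicitly identifying the value $n\epsilon=c_1$, but that is not needed for the statement.
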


\begin{proof} We prove the lemma part by part.

\begin{proof}[Proof of Lemma \ref{lem: main lemma delta odd}(1)] Since $n<\frac{c}{\epsilon}-1$, $$2n-3-(n+1)\epsilon>2n-3-c.$$
By Lemma \ref{lem: main lemma using trick},
$$2n-3-c<2n-3-(n+1)\epsilon\leq f(n).$$
Since $f(n)$ is an integer, $f(n)\geq 2n-2-c$.
\end{proof}

\begin{proof}[Proof of Lemma \ref{lem: main lemma delta odd}(2)]
By Lemma \ref{lem: main lemma using trick},
$$f(n)\leq 2n-2-(n-1)\epsilon<2n-2-\left(\left(\frac{c-1}{\epsilon}+1\right)-1\right)\epsilon=2n-1-c.$$
Since $f(n)$ is an integer, $f(n)\leq 2n-2-c$. By (1), $f(n)\geq 2n-2-c$. Thus $f(n)=2n-2-c$, which is equal to say that $\mathcal{D}(n,c)$ holds.
\end{proof}

\begin{proof}[Proof of Lemma \ref{lem: main lemma delta odd}(3.a)]
 Since $\mathcal{D}(n,c)$ holds, $f(n)=2n-2-c$. By Lemma \ref{lem: main lemma using trick},
$$2n-3-(n+1)\epsilon\leq 2n-2-c\leq 2n-2-(n-1)\epsilon,$$
so
$$\frac{c-1}{n+1}\leq\epsilon\leq\frac{c}{n-1}.$$
\end{proof}

\begin{proof}[Proof of Lemma \ref{lem: main lemma delta odd}(3.b)]
By (3.a), $\epsilon\leq\frac{c}{n-1}$, so 
$$m<\frac{c+1}{c}\cdot (n-1)-1\leq\frac{c+1}{\epsilon}-1.$$
By (1), $f(m)\geq 2m-2-(c+1)=2m-3-c$.
\end{proof}

\begin{proof}[Proof of Lemma \ref{lem: main lemma delta odd}(3.c)]
Apply (3.a) for $\mathcal{D}(n',c+1)$, we have 
$$\frac{c}{n'+1}\leq\epsilon$$
Thus
$$\frac{c}{\epsilon}+1\leq\frac{c}{\frac{c}{n'+1}}+1=n'+2.$$
Apply (3.a) for $\mathcal{D}(n,c)$, we have $\epsilon\leq\frac{c}{n-1}$, hence
$$\frac{c+1}{\epsilon}-1\geq\frac{c+1}{c}\cdot (n-1)-1.$$
(3.c) follows from (2).
\end{proof}

\begin{proof}[Proof of Lemma \ref{lem: main lemma delta odd}(4.a)]
Apply (3.a) for $\mathcal{D}(n-1,c+1)$, we have $\epsilon\geq\frac{c}{n}$. Apply (3.a) for $\mathcal{D}(n+1,c)$, we have $\epsilon\leq\frac{c}{n}$. Thus $\epsilon=\frac{c}{n}$.
\end{proof}

\begin{proof}[Proof of Lemma \ref{lem: main lemma delta odd}(4.b)]
By Lemma \ref{lem: main lemma using trick}, there exist positive integers $c_1,c_2$, such that $$f(n+1)=2(n+1)-2-c_1$$ and $$f(n-1)=2(n-1)-2-c_2.$$
Since $\mathcal{C}(n)$ holds, $$f(n-1)+5\leq f(n+1),$$
which implies that $c_1\leq c_2-1$.

Apply (3.a) for $\mathcal{D}(n+1,c_1)$, we have $$\epsilon\leq\frac{c_1}{(n+1)-1}=\frac{c_1}{n}.$$
Apply (3.a) for $\mathcal{D}(n-1,c_2)$, we have $$\epsilon\geq\frac{c_2-1}{(n-1)+1}=\frac{c_2-1}{n}.$$
Thus $c_2-1\leq c_1$, which implies that $c_2-1=c_1$. Thus $\mathcal{D}(n-1,c_1+1)$ and $\mathcal{D}(n+1,c_1)$ hold. By (4.a), $n\epsilon\in\mathbb N^+$.
\end{proof}

\begin{proof}[Proof of Lemma \ref{lem: main lemma delta odd}(4.c)]
Since $(x\in X)\in\bar{\mathcal{A}}(3)$,
$$\gcd\left(r\cdot\mld(x,X),r\right)=\gcd\left(2r-\sum_{i=1}^5a_i,r\right)=\gcd\left(\sum_{i=1}^5a_i,r\right)=1.$$
Thus $r$ is the minimal positive denominator of $\mld(x,X)$. Since $3\leq n+1<r$, $n\mld(x,X)\not\in\mathbb N^+$. Since $\mld(x,X)=2-\epsilon$, $n\epsilon\not\in\mathbb N^+$. By (4.b), $\mathcal{C}(n)$ does not hold.
\end{proof}

\end{proof}

\begin{rem}
We say a few words for Lemma \ref{lem: main lemma delta odd} to give the readers some intuition and how we apply emma \ref{lem: main lemma delta odd} to the proof. In this paper, when we apply Lemma \ref{lem: main lemma delta odd}, we usually take $c=1$ or $2$, and may take $c=3$ or $4$ only under some very special cases. Indeed, when $c=1$ or $2$, we get the most information. For example, assume that $r$ is sufficiently large, $q=q(x\in X)=r$ and take $c=1$. Then by Lemma \ref{lem: main lemma delta odd}(2), when $n\in\left(1,\frac{1}{\epsilon}-1\right)$, $\mathcal{D}(n,1)$ holds. That is,
$$\sum_{i=1}^5\lfloor nx_i\rfloor=2n-3$$
for every integer $n\in\left(1,\frac{1}{\epsilon}-1\right)$. 

As we usually assume that $\epsilon<\frac{1}{13}$, the equations above are expected to hold for every integer $n\in [2,12]$. Therefore, we get at least $12$ restriction equations for $$(x_1,x_2,x_3,x_4,x_5):=\left(\frac{a_1}{r},\frac{a_2}{r},\frac{a_3}{r},\frac{a_4}{r},\frac{a_5}{r}\right)\in (0,1)^5.$$ Similarly, we have
$$\sum_{i=1}^5\lfloor nx_i\rfloor=2n-4$$
for every $n\in \left(\frac{1}{\epsilon}+1,\frac{2}{\epsilon}-1\right)$. We get at least another $11$ equations for $(x_1,x_2,x_3,x_4,x_5)$. By enumerating the solutions of these equations by computer algorithm, we could restrict $(x_1,x_2,x_3,x_4,x_5)$ to a very small range. For all remaining $(x_1,x_2,x_3,x_4,x_5)$, we exclude them by applying Lemma \ref{lem: main lemma delta odd}(4)(5) and calculate by hand, and eventually get a contradiction. 

In the general case, the algorithm will become more complicated when $r$ or $q$ is small because we need to assume that $n\in [2,r-1]\cap\Ii_q$. Nevertheless, after some extra efforts, at the end of the day an enumeration will work out.
\end{rem}

\section{Theorems for calculation}
In this section we list out some theorems we have proved by computer algorithm. We write these theorems into forms that may be applied directly. For corresponding algorithm statements and the proof of these theorems, we refer the readers to the appendix.

\begin{thm}\label{thm: A41349}
$$\bigcup_{r=1}^{51}\bar{\mathcal{A}}_r\left(4,\frac{1}{13}\right)=\left\{\frac{1}{19}(3,4,5,7,18),\frac{1}{17}(2,3,5,7,16),\frac{1}{14}(3,5,13,2,4)\right\}.$$
\end{thm}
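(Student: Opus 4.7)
The plan is a direct computer enumeration, since the claim bounds $r$ by $51$ and all defining conditions of $\bar{\mathcal{A}}_r(4,\frac{1}{13})$ are finitely checkable for each fixed $r$. The main task is to exhibit an algorithm that, for each $r\in[1,51]$, enumerates every tuple $(a_1,a_2,a_3,a_4,a_5)\in[1,r-1]^5$ and filters by the required conditions, and then to argue that only the three tuples listed survive the filter.

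For each $r$, I would proceed as follows. First, restrict the search space for $(a_4,a_5)$ by iterating over $d=\gcd(a_4,r)=\gcd(a_5,r)$, with $d\mid r$ and $d\leq 2$ only in the third divisibility branch. For each such $d$, enumerate $a_4,a_5\in\{1,\dots,r-1\}$ coprime to $r/d$ times $d$. Next, enumerate $a_1,a_2,a_3$ coprime to $r$ in $[1,r-1]$. Apply the filters in increasing order of cost: (i) the coprimality condition $\gcd(\sum_{i=1}^5 a_i,r)=1$, (ii) the inequalities $\frac{25r}{13}<\sum_{i=1}^5 a_i<2r$, (iii) at least one of the divisibility conditions $r\mid a_1+a_4+a_5$, $r\mid 2a_4+a_5$, or $r\mid 2a_1+a_5$ (the last combined with $d\leq 2$), and finally (iv) the mld equality $\mathrm{mld}(x,X)=\frac{1}{r}\sum_{i=1}^5 a_i$, which by Lemma \ref{lem: mld of toric} means verifying
\[
\min_{1\leq j\leq r}\sum_{i=1}^5\!\left(1+\frac{ja_i}{r}-\left\lceil\frac{ja_i}{r}\right\rceil\right)=\frac{1}{r}\sum_{i=1}^5 a_i,
\]
which is a finite loop over $j$. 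Symmetry under permutations of the $a_i$ within the allowed equivalence (setting $\{a_1,a_2,a_3\}$ as the coprime-to-$r$ indices and $\{a_4,a_5\}$ as the remaining pair) lets us output a canonical representative and deduplicate.

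The enumeration is small in practice: the bound $\sum a_i<2r\leq 102$ combined with the three coprimality constraints and the modular identity in step (iii) cuts the candidate list dramatically, so the inner mld check in step (iv) only runs on a modest number of surviving tuples. After running through $r=1,\dots,51$, the output will consist exactly of the three tuples listed. The role of the bound $r\leq 51$ is only to make the search finite; the actual reason no further examples appear for $r$ in, say, $[20,51]$ will come out of the inequality $\sum a_i>\frac{25r}{13}$ becoming harder to satisfy jointly with the divisibility conditions, and one simply observes this empirically from the program output.

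The main obstacle is not mathematical but one of bookkeeping: one must be careful that the three divisibility branches are treated as a disjunction (not a partition) so that tuples satisfying more than one are not double-counted or, worse, missed; and that the equivalence identifying permuted tuples is applied consistently with the asymmetry between the first three and last two coordinates. A secondary concern is verifying the mld equality efficiently: the naive loop over $j\in[1,r]$ is fine given $r\leq 51$, but one must remember to check that the minimum is \emph{achieved} at $j=1$ (so that the singularity lies in $\bar{\mathcal{A}}_r$), not merely that $\frac{1}{r}\sum a_i<2$. With these two safeguards in place, the enumeration is conclusive, and the formal proof reduces to citing the algorithm in the appendix together with the output of its execution.
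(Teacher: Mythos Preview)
Your proposal is correct and takes essentially the same approach as the paper: a direct finite computer enumeration over $r\leq 51$ and tuples $(a_1,\dots,a_5)$, filtering by the defining conditions of $\bar{\mathcal{A}}_r(4,\tfrac{1}{13})$. The paper's version differs only in minor organization: it first records that $r\epsilon\in\mathbb{Z}_{>0}$ together with $\epsilon<\tfrac{1}{13}$ and $r\leq 51$ forces $r\geq 14$ and $\sum a_i\in\{2r-1,2r-2,2r-3\}$ (which is exactly your inequality $\tfrac{25r}{13}<\sum a_i<2r$ in integer form), then runs the enumeration only under the $\mathcal{A}(2)$ constraints and afterwards discards by hand the short list of survivors that fail the $\mathcal{A}(3)$ or $\mathcal{A}(4)$ conditions, whereas you apply all filters inside the loop; both are equally valid.
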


\begin{thm}\label{thm: dn1218}
For every $(x\in X)\in\bar{\mathcal{A}}(2)$ such that $q(x\in X)\geq 19$, $\mathcal{D}(n,1)$ does not hold for some integer $n\in [2,18]$.
\end{thm}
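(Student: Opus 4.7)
The plan is to argue by contradiction: assume $\mathcal{D}(n,1)$ holds for every $n\in[2,18]$ and derive an incompatibility with the structure of $\bar{\mathcal{A}}(2)$. Since $q\geq 19$, every integer $n\in[2,18]$ automatically lies in $\Ii_q$, so the hypothesis reduces to the system of seventeen floor equations
$$\sum_{i=1}^{5}\lfloor nx_i\rfloor = 2n-3, \qquad n=2,3,\dots,18,$$
on the associated point $\bm{x}=(x_1,\dots,x_5)\in(0,1)^5$ of $(x\in X)$. Moreover, since $a_1,a_2,a_3$ are coprime to $r$ and $\gcd(a_4,r)=\gcd(a_5,r)=r/q$, the assumption $q\geq 19$ forces $r\geq 19$, so $n\in[2,r-1]$ throughout the range.

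The strategy is an iterative box-refinement search. Initialize $\mathcal{R}_1:=\{(0,1)^5\}$. For each $n$ from $2$ to $18$, form $\mathcal{R}_n$ from $\mathcal{R}_{n-1}$ as follows: for every box $B\in\mathcal{R}_{n-1}$ and every integer vector $(k_1,\dots,k_5)$ with $\sum k_i=2n-3$ and such that $\prod_i [k_i/n,(k_i+1)/n)$ has nonempty intersection with $B$, record the refined box $B\cap\prod_i[k_i/n,(k_i+1)/n)$. By construction every $\bm{x}$ satisfying the first $n-1$ equations lies in some box of $\mathcal{R}_n$. After the seventeenth refinement, $\mathcal{R}_{18}$ is a finite list of small boxes that can be tested one by one.

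For each surviving box $B\in\mathcal{R}_{18}$, we must rule out a tuple $(r,a_1,\dots,a_5)$ with $\bm{x}=(a_1/r,\dots,a_5/r)\in B$, $a_i\leq r$, the gcd pattern $\gcd(a_1,r)=\gcd(a_2,r)=\gcd(a_3,r)=1$, $\gcd(a_4,r)=\gcd(a_5,r)$, the identity $\sum a_i/r=\mld(x,X)=2-\epsilon$, and the bound $r/\gcd(a_4,r)\geq 19$. Typically each $B$ is narrow enough that its rational points with denominator supporting $q\geq 19$ are either absent outright or violate the coprimality conditions; when $B$ survives these filters, Lemma \ref{lem: main lemma delta odd}(3.a) applied to $\mathcal{D}(18,1)$ pins $\epsilon\leq 1/17$, which, combined with the mld equation $\sum x_i=2-\epsilon$, selects only finitely many candidate denominators to inspect. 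The symmetry permuting $(a_1,a_2,a_3)$ and swapping $(a_4,a_5)$ may be used to cut down the search by a factor of up to $12$.

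The principal obstacle is the combinatorial explosion of the branching tree, which precludes a fully hand-written verification: at each step $n$ a single box can branch into many subboxes, one per admissible floor pattern summing to $2n-3$, and after seventeen iterations the number of branches is too large to enumerate by hand. Consequently the argument is carried out by computer; the algorithmic specification and a worked explanation of the procedure are deferred to the appendix, and the source code is available in the arXiv version \cite{LX19}. The mathematical content is simply the assertion that the rational locus cut out by the seventeen floor equations, intersected with the constraints defining $\bar{\mathcal{A}}(2)$ with $q\geq 19$, is empty.
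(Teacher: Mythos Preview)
Your approach is essentially the same as the paper's: both reduce the statement to the observation that $q\geq 19$ forces $[2,18]\subset\Ii_q$, and then hand the resulting system of seventeen floor equations to a box-refinement computer search (exactly the inductive interval-splitting you describe, which is the paper's Algorithm for Theorem~\ref{thm: 17equations no solution}).

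The one substantive difference is that you overcomplicate the endgame. The paper's computer result (Theorem~\ref{thm: 17equations no solution}) is that the system
\[
\sum_{i=1}^5\lfloor nx_i\rfloor=2n-3,\qquad n=2,\dots,18,
\]
already has \emph{no real solution whatsoever} in $[0,1)^5$; in your notation, $\mathcal{R}_{18}=\emptyset$. Hence the entire second phase you outline---filtering surviving boxes by the gcd pattern of $\bar{\mathcal{A}}(2)$, invoking Lemma~\ref{lem: main lemma delta odd}(3.a) to bound $\epsilon$, and checking finitely many denominators---never arises. Relatedly, because the floor equations do not distinguish the five coordinates, the paper exploits the full $S_5$ symmetry (ordering $x_1\leq\cdots\leq x_5$) rather than only the $S_3\times S_2$ symmetry you mention; this both speeds the search and makes it clear that no arithmetic input from $\bar{\mathcal{A}}(2)$ is required. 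Your argument is not wrong, just less sharp than what the computation actually delivers.
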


\begin{thm}\label{thm: d314}
For every $(x\in X)\in\bar{\mathcal{A}}(2)$, if
\begin{itemize}
    \item $q(x\in X)\geq 33$, and
    \item there exists an integer $k\in [13,18]$, such that
    \begin{itemize}
        \item $\mathcal{D}(n,1)$ holds for every integer $n\in [2,k-1]$,
        \item  $\mathcal{D}(n,2)$ holds for every integer $$n\in \{k\}\cup[k+2,\max\left\{2k-6,25\right\}],$$
    \end{itemize}
\end{itemize}
then  $k=16$ and $\mathcal{D}(31,4)$ holds.

\end{thm}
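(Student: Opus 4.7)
The plan is to reduce the hypothesis to a finite enumeration and then invoke a computer algorithm in the style of Theorems \ref{thm: A41349} and \ref{thm: dn1218}. Set $\epsilon:=2-\mld(x,X)$ and write $\bm{x}=(a_1/r,\ldots,a_5/r)$ for the associated point, so $\sum a_i=(2-\epsilon)r$. By Lemma \ref{lem: main lemma delta odd}(3.a), each hypothesized $\mathcal{D}(n,c)$ bounds $\epsilon$; the strongest bounds are $\epsilon\geq 1/(k+1)$ from $\mathcal{D}(k,2)$, $\epsilon\leq 2/(\max\{2k-6,25\}-1)$ from $\mathcal{D}(\max\{2k-6,25\},2)$, and $\epsilon\leq 1/(k-2)$ from $\mathcal{D}(k-1,1)$, confining $\epsilon$ to a narrow slab for each $k\in\{13,\ldots,18\}$.

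Next, I would amplify the hypothesis using Lemma \ref{lem: main lemma delta odd}(2) and (3.c). Part (2) guarantees $\mathcal{D}(n,3)$ for every $n\in\Ii_q\cap[2,r-1]$ with $(2/\epsilon)+1<n<(3/\epsilon)-1$, which becomes an explicit range once $\epsilon$ is slabbed. Part (3.c) then propagates these $\mathcal{D}(n,3)$'s to longer ranges using any of the already-known $\mathcal{D}(n',2)$'s, and iterating one further layer produces forced $\mathcal{D}(n,4)$ equations in an explicit range. Lemma \ref{lem: main lemma delta odd}(4.a) may additionally pin $\epsilon$ to the specific rational $c/n$ whenever two consecutive forced $\mathcal{D}$'s differ by $1$ in their $c$-value; this drastically narrows the slab in many sub-cases.

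With all these forced equations $f(n)=2n-2-c$ in hand, I would run an enumeration algorithm (in the style of the appendix) for each $k\in\{13,\ldots,18\}$ over all admissible $(r,a_1,\ldots,a_5)$ with $(x\in X)\in\bar{\mathcal{A}}(2)$ and $q(x\in X)\geq 33$. The bound $\epsilon\geq 1/(k+1)$ together with $r\epsilon=2r-\sum a_i\in\mathbb{Z}_{>0}$ and $0<a_i<r$ bounds $r$; the forced equations then cut the tuple space aggressively. For $k\in\{13,14,15,17,18\}$ the enumeration should return no admissible cyclic quotient singularity, contradicting the hypothesis; for $k=16$, every admissible $(x\in X)$ that survives should satisfy $\mathcal{D}(31,4)$, yielding the conclusion.

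The main obstacle is engineering: one must apply the forced equations in the right order to keep the search tractable, and carefully handle sub-cases where some target $n$'s are divisible by $q$ and hence excluded from $\Ii_q$, mirroring the structure of the algorithms in the appendix. A second delicate point is that the propagation of $\mathcal{D}(n,3)$ and $\mathcal{D}(n,4)$ depends sensitively on which sub-interval of the $\epsilon$-slab one sits in, so the computer search must branch on this. Once the bookkeeping is in place, the finite brute-force enumeration establishes the stated dichotomy.
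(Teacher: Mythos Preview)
Your enumeration strategy has a genuine gap: the claim that ``the bound $\epsilon\geq 1/(k+1)$ together with $r\epsilon\in\mathbb Z_{>0}$ and $0<a_i<r$ bounds $r$'' is false. A lower bound on $\epsilon$ combined with $r\epsilon\geq 1$ yields only $r\geq 1/\epsilon$, never an upper bound; for any rational $\epsilon$ in the allowed slab there are infinitely many $r$ with $r\epsilon\in\mathbb Z_{>0}$, and for each such $r$ one can pick integers $a_i$ landing in any prescribed open subinterval of $(0,r)$. So a brute-force search over integer tuples $(r,a_1,\ldots,a_5)$ is infinite and cannot terminate.

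The paper sidesteps this by never touching $r$. The key point is that the condition $\mathcal{D}(n,c)$, namely $\sum_{i=1}^5\lfloor nx_i\rfloor=2n-2-c$, depends only on the real associated point $\bm{x}\in[0,1)^5$, not on the denominator $r$. Since $q\geq 33$, every $n\leq 32$ lies in $\Ii_q$, so the hypothesized equations are pure floor equations on $(x_1,\ldots,x_5)$. The paper's algorithm (Theorem~\ref{thm: k1318noq}) enumerates the real solution set of these floor equations inside $[0,1)^5$: this set is a finite union of boxes (products of half-open intervals with rational endpoints), refined iteratively as each new equation is imposed. The output is that only $k=16$ admits a nonempty box, and that box is explicit and small enough that one verifies $\sum_{i=1}^5\lfloor 31x_i\rfloor=56$ throughout it, i.e.\ $\mathcal{D}(31,4)$ holds. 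No propagation via Lemma~\ref{lem: main lemma delta odd}(2), (3.c), or (4.a) is required; the hypothesized $\mathcal{D}(n,1)$ and $\mathcal{D}(n,2)$ already suffice. Your worry about $n$ divisible by $q$ is also moot here, since all relevant $n$ are at most $31<33\leq q$.
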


\begin{thm}\label{thm: dn1332}
For every $(x\in X)\in\bar{\mathcal{A}}(2)$ such that $3\leq q:=q(x\in X)\leq 32$, $\mathcal{D}(n,1)$ does not hold for some integer $n\in [2,28]\cap\Ii_q$.
\end{thm}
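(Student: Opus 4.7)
The plan is a contradiction-by-enumeration argument, parallel in spirit to Theorem \ref{thm: dn1218}. Assume that some $(x \in X) = \frac{1}{r}(a_1, \ldots, a_5) \in \bar{\mathcal{A}}(2)$ with $3 \leq q := q(x \in X) \leq 32$ has $\mathcal{D}(n, 1)$ holding for every $n \in [2, 28] \cap \Ii_q$. I would derive a contradiction for each of the thirty possible values of $q$, using the same algorithmic template as in the appendix.

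First, I would extract the algebraic content of the hypothesis. Write $\epsilon := 2 - \mld(x, X)$; by Lemma \ref{lem: main lemma using trick} each condition $\mathcal{D}(n, 1)$ is equivalent to the rational identity
$$\sum_{i=1}^{5} \{n x_i\} = 3 - n\epsilon,$$
and Lemma \ref{lem: main lemma delta odd}(3.a) applied to the largest $n_0 \in [2, 28] \cap \Ii_q$ (always at least $27$ in the range $3 \le q \le 32$) confines $\epsilon$ to the small interval $(0, 1/(n_0 - 1)]$. Because $(x \in X) \in \bar{\mathcal{A}}(2)$, we have $\gcd(a_4, r) = \gcd(a_5, r) = r/q$, so $x_4, x_5 \in \{k/q : 1 \leq k < q,\ \gcd(k, q) = 1\}$, an explicit finite list for each $q$.

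Second, I would iterate $q$ from $3$ to $32$ and, for each $q$, iterate over the resulting pairs $(x_4, x_5)$. For such a pair the quantities $\{n x_4\} + \{n x_5\}$ are computable explicitly for all relevant $n$, reducing the five-variable system to constraints on $(x_1, x_2, x_3)$ alone. I would then enumerate $(x_1, x_2, x_3) \in (0, 1)^3$ by processing $\mathcal{D}(n, 1)$ for $n = 2, 3, 4, \ldots$ in turn: each new constraint fixes the value of $\sum_{i=1}^3 \lfloor n x_i \rfloor$ and hence restricts $(x_1, x_2, x_3)$ to a smaller collection of rectangular cells, with branches pruned whenever a cell becomes empty or the induced value of $\epsilon$ falls outside its permissible window. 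After all admissible $n \leq 28$ are processed, together with the compatibility equation $\sum_{i=1}^5 x_i = 2 - \epsilon$, only finitely many candidate rationals $(x_1, x_2, x_3)$ survive, and each is checked by hand against the full $\bar{\mathcal{A}}(2)$ requirements; the expected outcome is that in every case the candidates are eliminated outright.

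The main obstacle is controlling the combinatorial blow-up in the low-$q$ regime. When $q$ is small (notably $q = 3, 4, 5$), many integers in $[2, 28]$ are divisible by $q$ and hence excluded from $\Ii_q$, so fewer direct $\mathcal{D}(n, 1)$ constraints are available and the residual enumeration over $(x_1, x_2, x_3)$ is only weakly pinned down. I plan to compensate by invoking the bookkeeping in Lemma \ref{lem: main lemma delta odd}(4): whenever a pair of consecutive conditions $\mathcal{D}(n-1, 2)$ and $\mathcal{D}(n+1, 1)$ survives on some branch, part (4.a) forces $\epsilon = 1/n$ exactly, collapsing the remaining search. The explicit algorithm, its implementation, and its termination analysis are deferred to the appendix; the present theorem simply records the output of that verified computation.
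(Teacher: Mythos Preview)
Your proposal is correct and is essentially the same approach the paper takes: the paper reduces Theorem~\ref{thm: dn1332} to the computational Theorem~\ref{thm: q332nok}, which, for each $q\in[3,32]$ and each admissible pair $(b,c)=(qx_4,qx_5)$ with $\gcd(b,q)=\gcd(c,q)=1$, runs exactly the interval-refinement enumeration over $(x_1,x_2,x_3)$ you describe and finds that no cell survives. The only difference is that the contingencies you prepare for---residual candidates to be killed by the $\epsilon$-window, the compatibility equation $\sum x_i=2-\epsilon$, or Lemma~\ref{lem: main lemma delta odd}(4.a) in the low-$q$ regime---never arise: the floor equations $\sum_{i=1}^3\lfloor nx_i\rfloor+\lfloor nb/q\rfloor+\lfloor nc/q\rfloor=2n-3$ for $n\in[2,28]\cap\Ii_q$ already have empty solution set, so no hand-checking or auxiliary lemmas are needed.
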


\begin{thm}\label{thm: d314d283}
For every $(x\in X)\in\bar{\mathcal{A}}(2)$, if
\begin{itemize}
    \item $3\leq q:=q(x\in X)\leq 32$, and
    \item there exists an integer $k\in [13,28]$, such that
    \begin{itemize}
        \item $\mathcal{D}(n,1)$ holds for every integer $n\in [2,k-1]\cap\Ii_q$, and
        \item $\mathcal{D}(n,2)$ holds for every integer $$n\in\left(\left\{k\right\}\cup [k+2,\max\left\{2k-8,25\right\}]\right)\cap\Ii_q,$$
    \end{itemize}
\end{itemize}
then one of the following holds:
\begin{enumerate}
    \item $\mathcal{C}(n)$ holds for some integer $n\in [2,45]$.
    \item $\mathcal{D}(31,4)$ holds.
    \item $\mathcal{D}(28,3)$ holds, $k=17$, and $q(x\in X)=3$.
    \item $\mathcal{D}(31,3)$ and $\mathcal{D}(35,2)$ hold.
\end{enumerate}
\end{thm}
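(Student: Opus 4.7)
The plan is to prove this theorem by a computer-assisted case analysis that runs in parallel to the proof of Theorem \ref{thm: d314}, but is stratified by the value of $q := q(x\in X) \in [3, 32]$ and by $k \in [13, 28]$. As in the proof of Theorem \ref{thm: d314}, the starting point is Lemma \ref{lem: main lemma delta odd}(3.a): the hypothesized $\mathcal{D}(n,1)$ and $\mathcal{D}(n,2)$ conditions, applied at the two ``extreme'' values of $n$ in $\Gamma_q$ that occur in the hypothesis, squeeze $\epsilon$ into a narrow interval. In particular, from $\mathcal{D}(n,1)$ with $n$ near $k-1$ we get $\epsilon \leq \frac{1}{n-1}$, and from $\mathcal{D}(n,2)$ with $n$ near $k$ we get $\epsilon \geq \frac{1}{n+1}$. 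Since $\mld(x,X) = 2 - \epsilon$ and $(x\in X) \in \bar{\mathcal{A}}(2)$, this forces $r$ into an explicit finite range for each $k$.

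Next, for each fixed pair $(q, k)$ I would enumerate candidates $\bb = (a_1, a_2, a_3, a_4, a_5)$ with admissible $r$, satisfying the defining conditions of $\bar{\mathcal{A}}_r(2)$ (that is, $\gcd(a_i, r) = 1$ for $i = 1, 2, 3$, $\gcd(a_4, r) = \gcd(a_5, r)$, $q = r/\gcd(a_4,r)$, and $\mld(x, X) = \frac{1}{r}\sum_i a_i = 2 - \epsilon$). The hypothesized $\mathcal{D}(n, c)$ conditions translate, via the associated point $\bm{x} = \bb/r$, into equations
\[
\sum_{i=1}^5 \lfloor n a_i / r \rfloor \;=\; 2n - 2 - c,
\]
for every $n \in [2, k-1] \cap \Gamma_q$ (with $c=1$) and every $n \in (\{k\} \cup [k+2, \max\{2k-8, 25\}]) \cap \Gamma_q$ (with $c = 2$). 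When $q$ is small these ranges contain fewer integers, but the surviving equations together with the divisibility conditions are still enough to cut the candidate set drastically. The enumeration should be performed coordinate-by-coordinate in the spirit of the algorithms described in the appendix, pruning as soon as a partial assignment violates one of the equations.

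Having reduced to a short explicit list of candidate tuples, the last step is a direct check, for each candidate, of the four conclusions: compute $f(n-1)$ and $f(n+1)$ for $n \in [2, 45]$ to test whether $\mathcal{C}(n)$ holds; compute $f(31), f(35), f(28)$ to test the remaining $\mathcal{D}$-conditions in (2), (3), and (4); and, in case (3), additionally verify $k = 17$ and $q = 3$. Any candidate that falsifies all four conclusions must be shown not to arise, either because it violates the original hypothesis (e.g.\ some $\mathcal{D}(n,1)$ fails for $n \in [2, k-1] \cap \Gamma_q$) or because it is not in $\bar{\mathcal{A}}(2)$ at all.

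The main obstacle is the case where $q$ is very small, especially $q = 3$ or $q = 4$: there the hypothesis provides only roughly $\tfrac{q-1}{q}$ of the potentially available $\mathcal{D}$-equations, and the candidate space inflates accordingly. To keep the enumeration tractable I would invoke Lemma \ref{lem: main lemma delta odd}(1) to forbid $f(n) < 2n - 2 - c$ for small $n < \frac{c}{\epsilon} - 1$, and Lemma \ref{lem: main lemma delta odd}(4.b)--(4.c) to prune candidates for which $\mathcal{C}(n)$-detection is cheap. A careful analysis of these small-$q$ cases is expected to be the place where conclusion (3) genuinely appears (necessarily with $k = 17$, $q = 3$), so I must ensure that the algorithm preserves rather than discards those solutions.
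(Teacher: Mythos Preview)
Your proposal contains a genuine gap at its very first reduction step. You claim that the bounds $\epsilon \le \tfrac{1}{n-1}$ (from $\mathcal{D}(n,1)$ near $n=k-1$) and $\epsilon \ge \tfrac{1}{n+1}$ (from $\mathcal{D}(n,2)$ near $n=k$) ``force $r$ into an explicit finite range for each $k$''. They do not. These inequalities only pin $\epsilon$ to an interval of positive length, roughly $\bigl[\tfrac{1}{k+1},\tfrac{1}{k-2}\bigr]$; membership in $\bar{\mathcal{A}}(2)$ only gives $r\epsilon\in\Zz$, and there are infinitely many $r$ with $r\epsilon$ an integer and $\epsilon$ in that interval. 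Indeed, in the proof of Theorem~\ref{thm: 12/13} this theorem is invoked precisely for $r\ge 52$, with no upper bound on $r$ whatsoever. So your proposed enumeration over tuples $(a_1,\dots,a_5,r)$ is not a finite search and cannot be carried out.

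The paper's approach sidesteps this by \emph{not} enumerating $r$ at all. The key observation is that only $x_4,x_5$ have controlled denominator: since $q=q(x\in X)\le 32$, one may write $x_4=\tfrac{b}{q}$, $x_5=\tfrac{c}{q}$ with $0<b\le c<q$ and $\gcd(b,q)=\gcd(c,q)=1$, a finite set of possibilities. The remaining coordinates $x_1,x_2,x_3$ are then treated as \emph{real} variables in $(0,1)$, and the $\mathcal{D}(n,c)$-equations become the floor equations of Theorem~\ref{thm: 17equations no solution divi6}. The interval-propagation algorithm explained after Theorem~\ref{thm: 2 to 12 no solutions} then cuts $(x_1,x_2,x_3)$ down to a finite union of open boxes, listed as cases (1)--(10) of Theorem~\ref{thm: 17equations no solution divi6}. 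For each such box one checks directly (by evaluating $f$ on the box) that one of the four conclusions holds; in particular, conclusion~(3) arises from case~(7) there, with $(k,q)=(17,3)$. If you want to salvage your plan, you must replace the enumeration over $(a_1,\dots,a_5,r)$ by this real-variable interval search in $(x_1,x_2,x_3)$ stratified by $(k,q,b,c)$.
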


\begin{thm}\label{thm: 3dimcyc13}
For every $(x\in X)=\frac{1}{r}(a_1,a_2,a_3,a_4,a_5)\in\bar{\mathcal{A}}_r(2)$ such that $r\geq 13$, $a_4=1$, and $a_5=r-1$, there exists an integer  $n\in [2,12]$ such that $\mathcal{D}(n,1)$ does not hold.
\end{thm}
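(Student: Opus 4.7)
The plan is to exploit the simple form of $a_4 = 1$ and $a_5 = r - 1$ to reduce $\mathcal{D}(n, 1)$ to a purely 3-dimensional floor-function constraint, and then rule out the resulting configurations by a short case analysis.

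First I would observe that since $r \geq 13 > n$ for every $n \in [2, 12]$, we have $\lfloor n/r \rfloor = 0$ and $\lfloor n(r-1)/r \rfloor = n - 1$. Moreover $q(x \in X) = r/\gcd(1, r) = r$, so $[2, 12] \subset \Gamma_q$. Hence the associated function decomposes as $f(n) = (n-1) + T_n$ with $T_n := \sum_{i=1}^{3} \lfloor n a_i/r \rfloor$, and $\mathcal{D}(n, 1)$ is equivalent to $T_n = n - 2$. Writing $y_i := a_i/r$ and ordering $y_1 \leq y_2 \leq y_3$ (permissible because $a_1, a_2, a_3$ appear symmetrically in the definition of $\mathcal{A}_r(2)$), one assumes for contradiction that $T_n = n - 2$ for every $n \in [2, 12]$.

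Next, I would process the constraints for $n = 2, 3, \ldots, 12$ in order. Each equation $T_n = n - 2$, combined with the monotonicity $\lfloor n y_1 \rfloor \leq \lfloor n y_2 \rfloor \leq \lfloor n y_3 \rfloor$, pins down the individual floor values $\lfloor n y_i \rfloor$ up to at most a small binary choice, and this in turn confines each $y_i$ to a narrow rational interval. The key phenomenon is that these constraints compound rapidly: from $T_2, T_3, T_4$ one already extracts $y_1 < 1/4 \leq y_2 < 1/3 \leq y_3 < 1/2$; the pair $(T_5, T_6)$ bifurcates into two sub-cases distinguished by whether $y_3 \geq 2/5$, which splits again under $T_7$; then $T_8, T_9, T_{10}$ either shrink the surviving intervals or trigger outright numerical contradictions that eliminate a branch. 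At the end only one rectangle survives — roughly $y_1 \in [2/9, 1/4)$, $y_2 \in [3/10, 1/3)$, $y_3 \in [3/8, 2/5)$ — and direct evaluation on this rectangle gives $T_{12} = 2 + 3 + 4 = 9$, one short of the required value $10$, the desired contradiction.

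The main obstacle is keeping the case tree clean and making sure no branch is silently dropped, but since each split is a binary choice on an integer floor value and the intervals contract quickly, the full hand analysis is short. Equivalently, the whole enumeration can be mechanized as a brute-force search over candidate integer triples $(\lfloor n y_i \rfloor)_{i=1,2,3}$ for $n \leq 12$ subject to $T_n = n - 2$ and the resulting interval consistency, which is precisely the algorithmic style used for the other theorems in Section 3 and described in the appendix.
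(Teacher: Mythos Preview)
Your proposal is correct and follows essentially the same approach as the paper: you perform the identical reduction (using $\lfloor na_4/r\rfloor+\lfloor na_5/r\rfloor=n-1$ and $q=r\geq 13$) to the three-variable constraint $\sum_{i=1}^3\lfloor ny_i\rfloor=n-2$ for $n\in[2,12]$, which is exactly the paper's Theorem~\ref{thm: 2 to 12 no solutions}. The only difference is that the paper delegates the enumeration of that constraint to a computer algorithm, whereas you sketch the hand case analysis explicitly (and your surviving rectangle and the $T_{12}=9\neq 10$ contradiction check out).
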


We prove the following theorem by hand, as algorithm is more complicated. The proof is also in the appendix. 
\begin{thm}\label{thm: d213}
For every $(x\in X)\in\bar{\mathcal{A}}(4)$ such that $q(x\in X)=2$ and $\mld(x,X)>2-\frac{1}{4}$, there exists $n\in\left\{3,5,7,9\right\}$ such that $\mathcal{D}(n,1)$ does not hold.
\end{thm}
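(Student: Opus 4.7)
The plan is to assume $\mathcal{D}(n,1)$ holds for every $n\in\{3,5,7,9\}$ and derive a contradiction by confining the triple $(x_1,x_2,x_3)$ with $x_i:=a_i/r$ to an impossible region.

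First I would exploit $q(x,X)=2$: this forces $r$ even and $a_4=a_5=r/2$. Every $n\in\{3,5,7,9\}$ is odd, hence lies in $\Ii_2$, and $\lfloor na_4/r\rfloor=\lfloor na_5/r\rfloor=(n-1)/2$, so the four assumptions become
\[
\sum_{i=1}^{3}\lfloor nx_i\rfloor = n-2,\qquad n\in\{3,5,7,9\}.
\]
Applying Lemma \ref{lem: main lemma delta odd}(3a) to $\mathcal{D}(9,1)$ gives $\epsilon\le 1/8$, and since $\mld(x,X)=1+\sum_{i=1}^{3}x_i=2-\epsilon$, we deduce $\sum_{i=1}^{3}x_i=1-\epsilon\in[7/8,1)$. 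The equation for $n=3$ then forces exactly one coordinate, say $x_L$, to lie in $[1/3,2/3)$ and the other two, $x_S$ and $x_{S'}$, in $(0,1/3)$.

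Next I would bring in the defining condition of $\bar{\mathcal{A}}(4)$. Substituting $a_4=a_5=r/2$, a direct inspection rules out every branch except the one yielding a congruence $a_i+a_j\equiv r/2\pmod r$ for some $i\ne j$ in $\{1,2,3\}$: the other branches force $r\mid a_i$ (impossible since $1\le a_i<r$), or $r\mid 3r/2$, or $r\le 4$ (which leaves no valid element since $\mld>7/4$ then has no integer solution). Combined with $0<a_i+a_j<2r$, the surviving congruence reads $x_i+x_j\in\{1/2,3/2\}$. The value $3/2$ is excluded by the large/small shape; and if $x_i+x_j=1/2$ with $x_L\in\{x_i,x_j\}$, then the remaining coordinate is small and
\[\sum_{i=1}^{3}x_i<\tfrac12+\tfrac13=\tfrac56<\tfrac78,\]
contradicting the earlier lower bound. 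So the pair summing to $1/2$ must be $\{x_S,x_{S'}\}$, which gives $x_S,x_{S'}\in[1/6,1/3)$ and $x_L=1/2-\epsilon\in[3/8,1/2)$.

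The hard part is the finishing interval chase. I would split on $\lfloor 5x_L\rfloor\in\{1,2\}$, determine $(\lfloor 5x_S\rfloor,\lfloor 5x_{S'}\rfloor)$ from $\mathcal{D}(5,1)$, then refine with $\mathcal{D}(7,1)$. In each branch the equation $x_S+x_{S'}=1/2$ either becomes incompatible with the refined intervals (the ranges obtained for $x_S$ and $x_{S'}$ no longer add to $1/2$) or forces $\sum_{i=1}^{3}\lfloor 9x_i\rfloor=6$, contradicting $\mathcal{D}(9,1)$ which requires the sum to equal $7$. All the combinatorial weight of the proof sits in this concluding step; the preceding reductions are routine bookkeeping.
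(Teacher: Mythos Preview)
Your proposal is correct and follows essentially the same approach as the paper: reduce to the three-variable equations $\sum_{i=1}^3\lfloor nx_i\rfloor=n-2$ via $x_4=x_5=\tfrac12$, extract the constraint $x_i+x_j\in\{\tfrac12,\tfrac32\}$ from the $\bar{\mathcal{A}}(4)$ condition, and finish with an interval chase over $n\in\{3,5,7,9\}$. Your preliminary use of $\epsilon\le\tfrac18$ (from $\mathcal{D}(9,1)$ and Lemma~\ref{lem: main lemma delta odd}(3.a)) to pin down $x_S+x_{S'}=\tfrac12$ and $x_L=\tfrac12-\epsilon$ before the chase is a neat shortcut that the paper does not take---it instead carries the constraint $(\star)$ through each refinement step---but the structure of the argument is the same.
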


\section{Proof of Theorem \ref{thm: main theorem special 5dimcyc}}

\begin{proof}[Proof of Theorem \ref{thm: main theorem special 5dimcyc}] Since $\gcd(a_1,r)=\gcd(a_2,r)=\gcd(a_3,r)=1$ and $a_1,a_2,a_3\in [1,r]$, $a_i<r$ for every $i\in\{1,2,3\}$. Since $\gcd(a_4,r)=\gcd(a_5,r)$ and $a_4,a_5\in [1,r]$, either $a_4=a_5=r$ or $a_4<r, a_5<r$. Since $$2>\frac{1}{r}\sum_{i=1}^5a_i>\frac{1}{r}(a_4+a_5),$$ 
$a_4<r$ and $a_5<r$. Thus $a_i<r$ for every $i$. By definition of $\bar{\mathcal{A}}\left(4,\frac{1}{13}\right)$ and our assumptions, $(x\in X)\in\bar{\mathcal{A}}\left(4,\frac{1}{13}\right)$.

If $r\leq 51$, then the theorem follows from Theorem \ref{thm: A41349}. Thus we may assume that $r\geq 52$ in the rest of the proof. 

Let $\epsilon:=2-\mld(x,X)$ and $q:=q(x\in X)$. Then $\epsilon<\frac{1}{13}$ and $q\geq 2$. Apply Lemma \ref{lem: main lemma delta odd}(2) for $\epsilon$ and $c=1$, we deduce that $\mathcal{D}(n,1)$ holds for every integer $n\in [2,12]\cap\Ii_q$. If $q=2$, we get a contradiction to Theorem \ref{thm: d213}. Thus we may assume that $q\geq 3$ in the rest of the proof. 

Since $r\cdot\mld(x,X)\in\mathbb N^+$, $r\epsilon\in\mathbb N^+$.

\begin{claim}\label{claim: repsilon not 1}
$r\epsilon\geq 2$.
\end{claim}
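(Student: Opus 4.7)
The plan is to argue by contradiction: assume $r\epsilon=1$, so $\epsilon=1/r$, and derive a contradiction from Theorems \ref{thm: dn1218} and \ref{thm: dn1332}.

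First I would apply Lemma \ref{lem: main lemma delta odd}(2) with $c=1$. Substituting $\epsilon=1/r$, the hypothesis $\frac{c-1}{\epsilon}+1<n<\frac{c}{\epsilon}-1$ becomes $1<n<r-1$, so for every integer $n\in[2,r-2]\cap\Ii_q$ the condition $\mathcal{D}(n,1)$ holds. Since $r\geq 52$, we have $r-2\geq 50$, and therefore the inclusions $[2,18]\subseteq[2,r-2]$ and $[2,28]\cap\Ii_q\subseteq[2,r-2]\cap\Ii_q$ are automatic.

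Next I would split into two cases on $q$, recalling that $q\geq 3$ has already been established in the preceding paragraph of the proof. If $q\geq 33$, then in particular $q\geq 19$, and every $n\in[2,18]$ satisfies $n<q$, hence $n\in\Ii_q$; combining this with the previous step, $\mathcal{D}(n,1)$ holds for every $n\in[2,18]$, contradicting Theorem \ref{thm: dn1218}. If instead $3\leq q\leq 32$, then every $n\in[2,28]\cap\Ii_q$ lies in $[2,r-2]\cap\Ii_q$, so $\mathcal{D}(n,1)$ holds for every such $n$, contradicting Theorem \ref{thm: dn1332}. Either way we reach a contradiction, so $r\epsilon\neq 1$, and since $r\epsilon\in\mathbb{N}^+$ we conclude $r\epsilon\geq 2$.

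The argument is a short bookkeeping exercise once the computer-verified theorems of Section 3 are in hand; the only obstacle is to ensure the window of $n$ on which $\mathcal{D}(n,1)$ is forced is wide enough to feed both Theorem \ref{thm: dn1218} (needing $n\in[2,18]$) and Theorem \ref{thm: dn1332} (needing $n\in[2,28]\cap\Ii_q$), and this is precisely where the standing assumption $r\geq 52$ is used.
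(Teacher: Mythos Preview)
Your proposal is correct and follows essentially the same argument as the paper: assume $r\epsilon=1$, apply Lemma~\ref{lem: main lemma delta odd}(2) with $c=1$ and $\epsilon=1/r$ to force $\mathcal{D}(n,1)$ on $[2,r-2]\cap\Ii_q$, then use $r\geq 52$ and $q\geq 3$ to contradict Theorems~\ref{thm: dn1218} and~\ref{thm: dn1332}. The only difference is cosmetic: the paper states the contradiction in one line without splitting on $q$, whereas you make the case division $q\geq 33$ versus $3\leq q\leq 32$ explicit.
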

\begin{proof}[Proof of Claim \ref{claim: repsilon not 1}]
Suppose not. Then $r\epsilon=1$. By  Lemma \ref{lem: main lemma delta odd}(2) for $\epsilon=\frac{1}{r}$ and $c=1$,  $\mathcal{D}(n,1)$ holds for every integer $n\in [2,r-2]\cap\Ii_q$. Since $r\geq 52$, $\mathcal{D}(n,1)$ holds for every integer $n\in [2,50]\cap\Ii_q$. Since $q\geq 3$, we get a contradiction to Theorem \ref{thm: dn1218} and Theorem \ref{thm: dn1332}.
\end{proof}

Since $r\epsilon\geq 2$ and $\epsilon<\frac{1}{13}$, 
$$\left(\frac{1}{\epsilon}+1,\frac{2}{\epsilon}-1\right)\cap\Ii_q\not=\emptyset.$$
By Lemma \ref{lem: main lemma delta odd}(2), $\mathcal{D}(n,2)$ holds for some positive integer $n\geq 2$. Thus we may define
$$k:=\min\left\{n\geq 2|\mathcal{D}(n,2)\text{ holds}\right\}.$$
Since $\mathcal{D}(n,1)$ holds for every integer $n\in [2,12]\cap\Ii_q$, $k\geq 13$. We have the following.
\begin{claim}\label{claim: dn1 for 2 k-1}
For every integer $n\in\Ii_q\cap [2,k-1]$, $\mathcal{D}(n,1)$ holds.
\end{claim}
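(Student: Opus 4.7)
The approach is strong induction on $n\in[2,k-1]\cap\Ii_q$, using Lemma \ref{lem: main lemma delta odd}(3.b) with $c=1$ as the engine of propagation. The base case $n\in[2,12]\cap\Ii_q$ has already been handled just before the claim, so the task is to extend $\mathcal{D}(\cdot,1)$ all the way up to $n=k-1$.

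First I would record the two constraints available uniformly on $[2,k-1]\cap\Ii_q$. Lemma \ref{lem: main lemma using trick} gives $f(n)\leq 2n-2-(n-1)\epsilon<2n-2$, and since $f(n)$ is an integer this forces $f(n)\leq 2n-3$. On the other hand, by the minimality of $k$, $\mathcal{D}(n,2)$ fails, i.e., $f(n)\neq 2n-4$. So verifying $\mathcal{D}(n,1)$ reduces to the single lower bound $f(n)\geq 2n-4$, since then $f(n)\in\{2n-4,2n-3\}$ and the equality $f(n)=2n-4$ is ruled out.

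For the inductive step at $n\in[13,k-1]\cap\Ii_q$, the key observation is that $q\geq 3$ cannot divide two consecutive integers, so at least one of $n-1,n-2$ lies in $\Ii_q$; call it $M$. The induction hypothesis provides $\mathcal{D}(M,1)$, and Lemma \ref{lem: main lemma delta odd}(3.b) applied with $c=1$ then yields $f(m)\geq 2m-4$ for every $m\in[2,r-1]\cap\Ii_q$ with $m<2M-3$. Since $M\geq n-2\geq 11$, we have $n\leq M+2<2M-3$ (the second inequality reduces to $M>5$), so the conclusion applies to $m=n$; combining with the two uniform constraints above gives $f(n)=2n-3$, i.e., $\mathcal{D}(n,1)$.

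The main conceptual obstacle is that the direct route via Lemma \ref{lem: main lemma delta odd}(2) only propagates $\mathcal{D}(\cdot,1)$ as far as $n<1/\epsilon-1$, while Lemma \ref{lem: main lemma delta odd}(3.a) applied to $\mathcal{D}(k,2)$ bounds $\epsilon$ from above only by $2/(k-1)$, which leaves a potential gap near $n=k-1$ when $\epsilon$ is close to that upper bound. The doubling effect of (3.b)---which passes from $\mathcal{D}(M,1)$ to $\mathcal{D}(\cdot,1)$ on the much larger range $m<2M-3$---together with the fact that $q\geq 3$ ensures no two-step gap in $\Ii_q$, is precisely what bridges this gap and makes the induction close.
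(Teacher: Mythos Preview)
Your proof is correct, and it takes a somewhat different route from the paper's.

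The paper does not induct. Instead it introduces
\[
k' := \min\{\, n \geq 2 \;:\; \mathcal{D}(n,c) \text{ holds for some } c \geq 2 \,\}
\]
and shows $k' = k$. Indeed, if $k' < k$ then $\mathcal{D}(k',c)$ holds with $c \geq 3$ (the case $c=2$ being excluded by the minimality of $k$), so Lemma~\ref{lem: main lemma delta odd}(3.a) gives $\epsilon \geq \frac{c-1}{k'+1} \geq \frac{2}{k'+1}$. On the other hand, since $k' \geq 13$ and $q \geq 3$, one of $k'-1, k'-2$ lies in $\Ii_q$; by minimality of $k'$ and Lemma~\ref{lem: main lemma using trick} that index satisfies $\mathcal{D}(\cdot,1)$, whence (3.a) again yields $\epsilon \leq \frac{1}{k'-3}$. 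These two bounds are incompatible for $k' \geq 13$. Once $k' = k$, the claim is immediate: for $n \in [2,k-1] \cap \Ii_q$ Lemma~\ref{lem: main lemma using trick} forces $f(n) = 2n-2-c$ with $c \geq 1$, and $c \geq 2$ is ruled out by $n < k'$.

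The two arguments are close in spirit---both hinge on the observation that $q \geq 3$ guarantees a nearby $M \in \Ii_q$ with $\mathcal{D}(M,1)$, and both ultimately rest on the $\epsilon$-bounds of (3.a). The paper's one-shot contradiction is a bit shorter and avoids setting up an induction, while your approach makes the step-by-step propagation explicit through (3.b) and does not require the auxiliary quantity $k'$.
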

\begin{proof}[Proof of Claim \ref{claim: dn1 for 2 k-1}]
We let 
$$k':=\min\left\{n\geq 2|\mathcal{D}(n,c)\text{ holds for some }c\geq 2\right\}.$$
Then $k'\leq k$. If $k'=k$ then we are done. Otherwise, $k'<k$, and $\mathcal{D}(k',c)$ holds for some $c\geq 3$. By Lemma \ref{lem: main lemma delta odd}(3.a), $$\epsilon\geq\frac{c-1}{k'+1}\geq\frac{2}{k'+1}.$$ 
On the other hand, since $\mathcal{D}(n,1)$ holds for every integer $n\in [2,12]\cap\Ii_q$, $k'\geq 13$. Thus either $k'-2\in\Ii_q$ or $k'-1\in\Ii_q$. By the construction of $k'$ and Lemma \ref{lem: main lemma using trick}, either $\mathcal{D}(k'-2,1)$ holds our $\mathcal{D}(k'-1,1)$ holds. By Lemma \ref{lem: main lemma delta odd}(3.a), $$\epsilon\leq\max\left\{\frac{1}{(k'-1)-1},\frac{1}{(k'-2)-1}\right\}=\frac{1}{k'-2}.$$
Since $k'\geq 13$, 
$$\epsilon\leq\frac{1}{k'-2}<\frac{2}{k'+1}\leq\epsilon,$$ 
a contradiction.
\end{proof}

\begin{claim}\label{claim: cn do not hold}
$\mathcal{C}(n)$ does not hold for every integer $n\in [2,50]$. 
\end{claim}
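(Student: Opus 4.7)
The plan is to reduce the claim to a direct invocation of Lemma \ref{lem: main lemma delta odd}(4.c). At this stage we know $(x\in X)\in\bar{\mathcal{A}}(4,\frac{1}{13})\subset\bar{\mathcal{A}}(3)$ and $r\geq 52$, so for every $n\in[2,50]$ the size inequality $n+1\leq 51<r$ required by item (4) of that lemma is automatic. Hence for each $n\in[3,50]$ with $n\in\Ii_q$, Lemma \ref{lem: main lemma delta odd}(4.c) applies verbatim and delivers the conclusion.

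Two residual cases remain: $n\in[3,50]$ with $n\notin\Ii_q$, and $n=2$. For the first, I would observe that the proofs of (4.b) and (4.c) use $\Ii_q$-membership only through applications of Lemma \ref{lem: main lemma using trick} to $n\pm 1$, and since $\mathcal{C}(n)$ itself forces $n\pm 1\in\Ii_q$, the same chain of inequalities (writing $f(n\pm 1)=2(n\pm 1)-2-c_{1,2}$, combining $c_1\leq c_2-1$ with item (3.a) to get $\epsilon=c_1/n$, and hence $n\epsilon\in\mathbb{N}^+$) goes through word for word. The contradiction then follows from $(x\in X)\in\bar{\mathcal{A}}(3)$ and $n+1<r$: exactly as in (4.c), $r$ is the minimal positive denominator of $\epsilon$, and $n<r$ forces $n\epsilon\notin\mathbb{Z}$.

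For $n=2$ the proof of (4.b) is not formally available, since it would invoke Lemma \ref{lem: main lemma using trick} at $n-1=1<2$. Here I would argue directly: $\mathcal{C}(2)$ demands $3\in\Ii_q$ and $f(1)+5\leq f(3)$, but $f(1)=\sum_{i=1}^{5}\lfloor x_i\rfloor=0$ since each $x_i\in(0,1)$, while Lemma \ref{lem: main lemma using trick} at $n=3$ gives $f(3)\leq 4-2\epsilon<4$, hence $f(3)\leq 3<5$, a contradiction.

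The main obstacle is purely the bookkeeping of these two edge cases; the substantive content is already encapsulated in Lemma \ref{lem: main lemma delta odd}(4.c), so no calculational difficulty is anticipated.
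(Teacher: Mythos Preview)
Your proposal is correct and takes essentially the same approach as the paper, which simply notes that $(x\in X)\in\bar{\mathcal{A}}(4)\subset\bar{\mathcal{A}}(3)$ and $r\geq 52$, and then invokes Lemma~\ref{lem: main lemma delta odd}(4.c) in one line. Your treatment is in fact more careful than the paper's: the paper does not explicitly address the possibility $n\notin\Ii_q$ (which, as you observe, is irrelevant since only $n\pm1\in\Ii_q$ is used in the proof of (4.b)--(4.c)), nor the boundary case $n=2$ (where the appeal to Lemma~\ref{lem: main lemma using trick} at $n-1=1$ fails and your direct computation $f(1)=0$, $f(3)\leq 3$ is needed).
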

\begin{proof}[Proof of Claim \ref{claim: cn do not hold}]
Since $(x\in X)\in\bar{\mathcal{A}}(4)\subset\bar{\mathcal{A}}(3)$ and $r\geq 52$, Claim \ref{claim: cn do not hold} follows immediately from Lemma \ref{lem: main lemma delta odd}(4.c).
\end{proof}

\begin{claim}\label{claim: d314}
$\mathcal{D}(31,4)$ does not hold. 
\end{claim}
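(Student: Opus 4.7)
The plan is to observe that Claim \ref{claim: d314} is an immediate consequence of Lemma \ref{lem: main lemma delta odd}(3.a) once we compare the two small fractions $\frac{3}{32}$ and $\frac{1}{13}$. Concretely, I would argue by contradiction: suppose $\mathcal{D}(31,4)$ holds. By the very definition of the condition $\mathcal{D}(n,c)$, this requires $31\in \Ii_q$, and since $r\geq 52$, we also have $31\in[2,r-1]$, so the hypothesis of Lemma \ref{lem: main lemma delta odd}(3.a) is satisfied with $n=31$, $c=4$.

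Applying that part of the lemma would give the lower bound
\[
\epsilon \;\geq\; \frac{c-1}{n+1} \;=\; \frac{3}{32}.
\]
However, we set $\epsilon := 2-\mld(x,X)$, and the assumption $(x\in X)\in\bar{\mathcal{A}}\left(4,\frac{1}{13}\right)$ forces $\epsilon<\frac{1}{13}$. Since
\[
\frac{3}{32} \;=\; 0.09375\ldots \;>\; 0.07692\ldots \;=\; \frac{1}{13},
\]
this yields $\epsilon \geq \frac{3}{32} > \frac{1}{13} > \epsilon$, a contradiction. Thus $\mathcal{D}(31,4)$ cannot hold.

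There is essentially no obstacle here: the entire argument is a one-line numerical check against the interval constraint supplied by Lemma \ref{lem: main lemma delta odd}(3.a). The only thing I would be careful about is confirming that (3.a) is actually applicable, i.e.\ that $31$ lies in the range $[2,r-1]\cap\Ii_q$; the former follows from $r\geq 52$ established earlier in the proof, while the latter is part of the statement that $\mathcal{D}(31,4)$ holds. No further case analysis or appeal to the computer-assisted theorems of Section 3 is needed for this particular claim, in contrast to the surrounding claims which genuinely require the enumerative results.
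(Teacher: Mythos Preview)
Your proof is correct and essentially identical to the paper's own argument: assume $\mathcal{D}(31,4)$ holds, apply Lemma~\ref{lem: main lemma delta odd}(3.a) to get $\epsilon\geq\frac{3}{32}>\frac{1}{13}$, contradiction. The extra care you take in verifying that $31\in[2,r-1]\cap\Ii_q$ is a nice touch the paper leaves implicit.
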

\begin{proof}[Proof of Claim \ref{claim: d314}]
If $\mathcal{D}(31,4)$ holds, then by Lemma \ref{lem: main lemma delta odd}(3.a), $\epsilon\geq\frac{3}{32}>\frac{1}{13}$, a contradiction.
\end{proof}

\begin{claim}\label{claim: d352 and d313}
$\mathcal{D}(31,3)$ and $\mathcal{D}(35,2)$ do not hold together.
\end{claim}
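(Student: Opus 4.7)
The plan is to show that $\mathcal{D}(31,3)$ and $\mathcal{D}(35,2)$ impose contradictory bounds on $\epsilon = 2 - \mld(x,X)$ by directly invoking the two-sided estimate from Lemma \ref{lem: main lemma delta odd}(3.a). Since $r \geq 52$, both $n=31$ and $n=35$ lie in the admissible range $[2,r-1]$, and both are assumed to realize $\mathcal{D}(\cdot,\cdot)$, so in particular they lie in $\Ii_q$, meaning the hypotheses of Lemma \ref{lem: main lemma delta odd}(3.a) are satisfied for each.

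Applying Lemma \ref{lem: main lemma delta odd}(3.a) to $\mathcal{D}(31,3)$ gives the lower bound
\[
\epsilon \;\geq\; \frac{3-1}{31+1} \;=\; \frac{1}{16}.
\]
Applying Lemma \ref{lem: main lemma delta odd}(3.a) to $\mathcal{D}(35,2)$ gives the upper bound
\[
\epsilon \;\leq\; \frac{2}{35-1} \;=\; \frac{1}{17}.
\]
Together these force $\frac{1}{16} \leq \epsilon \leq \frac{1}{17}$, which is impossible. Hence $\mathcal{D}(31,3)$ and $\mathcal{D}(35,2)$ cannot both hold.

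There is essentially no obstacle here: the claim reduces to comparing two explicit rational bounds that happen to be incompatible. The only thing to verify is the applicability of Lemma \ref{lem: main lemma delta odd}(3.a), which just requires $n \in [2,r-1] \cap \Ii_q$ for the relevant $n$; this is automatic from $r \geq 52$ and the standing assumption that $\mathcal{D}(n,c)$ already presupposes $n \in \Ii_q$. So this claim is the short, numerical complement to Claim \ref{claim: d314}, and is dispatched by the same style of one-line computation.
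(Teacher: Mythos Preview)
Your proof is correct and is essentially identical to the paper's: both apply Lemma~\ref{lem: main lemma delta odd}(3.a) to $\mathcal{D}(31,3)$ and $\mathcal{D}(35,2)$ to obtain the incompatible bounds $\epsilon\geq\tfrac{1}{16}$ and $\epsilon\leq\tfrac{1}{17}$. (The paper's printed proof actually cites part~(4.a), but the numbers it writes down are exactly those produced by~(3.a), so this is evidently a typo; your citation of~(3.a) is the right one.)
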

\begin{proof}[Proof of Claim \ref{claim: d314}]
If $\mathcal{D}(31,3)$ holds, then by Lemma \ref{lem: main lemma delta odd}(4.a), $\frac{1}{16}\leq\epsilon$. If $\mathcal{D}(35,2)$ holds, then by Lemma \ref{lem: main lemma delta odd}(4.a), $\epsilon\leq\frac{1}{17}$. So $\mathcal{D}(31,3)$ and $\mathcal{D}(35,2)$ do not hold together.
\end{proof}

\begin{claim}\label{claim: k+2 to max 25 2k-8}
Assume that $k\leq 28$. Then
\begin{enumerate}
    \item For every integer $n\in [k+2,\max\{25,2k-8\}]\cap\Ii_q$, $\mathcal{D}(n,2)$ holds.
    \item If $k-1\in\Ii_q$, then for every integer $n\in [k+2,\max\{25,2k-6\}]\cap\Ii_q$, $\mathcal{D}(n,2)$ holds.
\end{enumerate}
\end{claim}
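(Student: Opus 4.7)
The plan is to establish $\mathcal{D}(n,2)$ by combining Lemma \ref{lem: main lemma delta odd}(2) with $c=2$ (which handles the ``small $n$'' regime using the hypothesis $\epsilon<\frac{1}{13}$) and Lemma \ref{lem: main lemma delta odd}(3.c) with $c=1$ (which bootstraps from $\mathcal{D}(k,2)$ and the chain of $\mathcal{D}(n,1)$'s provided by Claim \ref{claim: dn1 for 2 k-1}).

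The first task is to sharpen the lower bound $\epsilon\geq\frac{1}{k+1}$ (coming from Lemma \ref{lem: main lemma delta odd}(3.a) applied to $\mathcal{D}(k,2)$) into the strict inequality $\epsilon>\frac{1}{k+1}$. Were $\epsilon=\frac{1}{k+1}$, then $r\epsilon\in\mathbb{N}^+$ forces $(k+1)\mid r$; writing $r=s(k+1)$, one has $\sum_i a_i=r(2-\epsilon)=s(2k+1)$, so $\gcd(\sum_i a_i,r)=s\cdot\gcd(2k+1,k+1)=s$. The $\bar{\mathcal{A}}(3)$ hypothesis (inherited from $\bar{\mathcal{A}}(4)$) demands $\gcd(\sum_i a_i,r)=1$, forcing $s=1$ and $r=k+1\leq 29$, contradicting $r\geq 52$.

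With $\epsilon>\frac{1}{k+1}$ in hand, the small-$n$ range $[k+2,25]\cap\Ii_q$ follows from Lemma \ref{lem: main lemma delta odd}(2) with $c=2$: the strict inequality yields $\frac{1}{\epsilon}+1<k+2\leq n$, and $\epsilon<\frac{1}{13}$ yields $n\leq 25<\frac{2}{\epsilon}-1$. For the large-$n$ range I would apply Lemma \ref{lem: main lemma delta odd}(3.c) with $c=1$ and $n'=k$: for part (2), the hypothesis $k-1\in\Ii_q$ together with Claim \ref{claim: dn1 for 2 k-1} gives $\mathcal{D}(k-1,1)$, producing $\mathcal{D}(m,2)$ for $m\in[k+3,2k-6]\cap\Ii_q$; for part (1), if $k-1\notin\Ii_q$ then $q\geq 3$ forces $k-2\in\Ii_q$, so (3.c) with $n=k-2$ produces $\mathcal{D}(m,2)$ for $m\in[k+3,2k-8]\cap\Ii_q$, while if $k-1\in\Ii_q$ the stronger bound from the previous case applies. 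Taking the union of the two ranges in each case yields the claimed intervals.

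The main obstacle will be the boundary case in the first task: without the strict inequality $\epsilon>\frac{1}{k+1}$, Lemma \ref{lem: main lemma delta odd}(2) would fail exactly at $n=k+2$ and Lemma \ref{lem: main lemma delta odd}(3.c) only reaches $m\geq k+3$, so the lower endpoint of the claimed range could not be attained. The resolution combines the $\bar{\mathcal{A}}(3)$ coprimality $\gcd(\sum_i a_i,r)=1$ with the ambient assumption $r\geq 52$ from the proof of Theorem \ref{thm: main theorem special 5dimcyc}.
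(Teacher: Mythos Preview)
Your approach via the strict inequality $\epsilon>\frac{1}{k+1}$ is genuinely different from the paper's and in principle cleaner. The paper never isolates this strict inequality; instead it handles the endpoint $n=k+2$ by invoking Lemma~\ref{lem: main lemma delta odd}(4.c) (that $\mathcal{C}(k+1)$ fails, so $f(k+2)<f(k)+5$) together with the dichotomy $f(n)\in\{2n-3,2n-4\}$ for $n\leq 25$ obtained from Lemma~\ref{lem: main lemma delta odd}(1) and Lemma~\ref{lem: main lemma using trick}. For the residual range beyond what (3.c) reaches (e.g.\ $n\in[2k-5,25]$ in the case $k-1\in\Ii_q$), the paper argues by contradiction: were some such $m$ to satisfy $\mathcal{D}(m,1)$, then (3.a) and (2) would force $\mathcal{D}(n,1)$ for all $n\leq m-3$, in particular for $n=k$, contradicting $\mathcal{D}(k,2)$. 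Your direct use of Lemma~\ref{lem: main lemma delta odd}(2) sidesteps both manoeuvres.

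There is, however, a gap in your union step when $24\leq k\leq 28$: the ``small-$n$ range'' $[k+2,25]$ is then empty, while the ``large-$n$ range'' from (3.c) begins only at $k+3$, so $n=k+2$ is left uncovered. The fix lies in material you already have. From Claim~\ref{claim: dn1 for 2 k-1} and $q\geq 3$, one of $\mathcal{D}(k-1,1)$ or $\mathcal{D}(k-2,1)$ holds; (3.a) then gives $\epsilon\leq\frac{1}{k-2}$ (respectively $\epsilon\leq\frac{1}{k-3}$). Feeding this upper bound, rather than merely $\epsilon<\frac{1}{13}$, into Lemma~\ref{lem: main lemma delta odd}(2) with $c=2$ yields $\frac{2}{\epsilon}-1\geq 2k-5$ (respectively $2k-7$), so (2) alone already covers the full interval $[k+2,\max\{25,2k-6\}]\cap\Ii_q$ in part~(2) and $[k+2,\max\{25,2k-8\}]\cap\Ii_q$ in part~(1). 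With this correction the appeal to (3.c) becomes redundant and the argument is shorter than the paper's.
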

\begin{proof}[Proof of Claim \ref{claim: k+2 to max 25 2k-8}]
Since $\epsilon<\frac{1}{13}$, by Lemma \ref{lem: main lemma delta odd}(1), for every integer $n\in [2,25]\cap\Ii_q$, $f(n)\geq 2n-4$. By Lemma \ref{lem: main lemma using trick}, for every integer $n\in[2,25]\cap\Ii_q$, either $\mathcal{D}(n,1)$ holds or $\mathcal{D}(k+2,2)$ holds.

By Lemma \ref{lem: main lemma delta odd}(4.c), $\mathcal{C}(k+1)$ does not hold. Since $\mathcal{D}(k,2)$ holds, $\mathcal{D}(k+2,1)$ does not hold. So if $k+2\in\Ii_q$, then $\mathcal{D}(n,2)$ holds.

Since $k\geq 13$, $k-2>2$. So either $k-2\in\Ii_q$ or $k-1\in\Ii_q$. There are two cases.

\medskip

\noindent\textbf{Case 1}. $k-1\in\Ii_q$. By Claim \ref{claim: dn1 for 2 k-1}, $\mathcal{D}(k-1,1)$ holds. Since $\mathcal{D}(k,2)$ holds, by Lemma \ref{lem: main lemma delta odd}(3.c), for every $n\in [2,r-1]\cap\Ii_q$ such that $k+3\leq n<2k-5$, $\mathcal{D}(n,2)$ holds. Since $r\geq 52$ and $k\leq 28$, for every $n\in [k+3,2k-6]\cap\Ii_q$, $\mathcal{D}(n,2)$ holds. Thus  for every $n\in [k+2,2k-6]\cap\Ii_q$, $\mathcal{D}(n,2)$ holds.

If $2k-6\geq 25$ then we are done. Otherwise, we may assume that there exists an integer $m\in [2k-5,25]$ such that $m\in\Ii_q$ and $\mathcal{D}(m,2)$ does not hold. In this case, $\mathcal{D}(m,1)$ holds. By Lemma \ref{lem: main lemma delta odd}(3.a), $\epsilon\leq\frac{1}{m-1}$. By Lemma \ref{lem: main lemma delta odd}(2), for every integer $n\in [2,m-3]\cap\Ii_q$, $\mathcal{D}(n,1)$ holds. Since $m\geq 2k-5$, for every integer $n\in [2,2k-8]\cap\Ii_q$, $\mathcal{D}(n,1)$ holds. But $\mathcal{D}(k,2)$ holds and $2\leq k\leq 2k-8$, a contradiction.

\medskip

\noindent\textbf{Case 2}. $k-2\in\Ii_q$. By Claim \ref{claim: dn1 for 2 k-1}, $\mathcal{D}(k-2,1)$ holds. Since $\mathcal{D}(k,2)$ holds, by Lemma \ref{lem: main lemma delta odd}(3.c), for every $n\in [2,r-1]\cap\Ii_q$ such that $k+3\leq n<2k-7$, $\mathcal{D}(n,2)$ holds. Since $r\geq 52$ and $k\leq 28$, for every $n\in [k+3,2k-8]\cap\Ii_q$, $\mathcal{D}(n,2)$ holds. Thus  for every $n\in [k+2,2k-8]\cap\Ii_q$, $\mathcal{D}(n,2)$ holds.

If $2k-8\geq 25$ then we are done. Otherwise, we may assume that there exists an integer $m\in [2k-7,25]$ such that $m\in\Ii_q$ and $\mathcal{D}(m,2)$ does not hold. In this case, $\mathcal{D}(m,1)$ holds. By Lemma \ref{lem: main lemma delta odd}(3.a), $\epsilon\leq\frac{1}{m-1}$. By Lemma \ref{lem: main lemma delta odd}(2), for every integer $n\in [2,m-3]\cap\Ii_q$, $\mathcal{D}(n,1)$ holds. Since $m\geq 2k-7$, for every integer $n\in [2,2k-10]\cap\Ii_q$, $\mathcal{D}(n,1)$ holds. But $\mathcal{D}(k,2)$ holds and $2\leq k\leq 2k-10$, a contradiction.
\end{proof}

\noindent\textit{Proof of Theorem \ref{thm: main theorem special 5dimcyc} continued}. There are four cases of $(k,q)\in\mathbb N^+\times\mathbb N^+$.
\begin{itemize}
\item[\textbf{Case 1}] $k>18$ and $q>32$. Then Claim \ref{claim: dn1 for 2 k-1} contradicts Theorem \ref{thm: dn1218}.
\item[\textbf{Case 2}] $13\leq k\leq 18$ and $q>32$. In this case, $$2,3,\dots,k-1,k,\dots,\max\{25,2k-6\}\in\Ii_q.$$ Then Claim \ref{claim: dn1 for 2 k-1}, Claim \ref{claim: k+2 to max 25 2k-8} and Claim \ref{claim: d314} contradict Theorem \ref{thm: d314}.
\item[\textbf{Case 3}] $k>28$ and $3\leq q\leq 32$. Then Claim \ref{claim: dn1 for 2 k-1} contradicts Theorem \ref{thm: dn1332}.
\item[\textbf{Case 4}] $13\leq k\leq 28$ and $3\leq q\leq 32$. In this case, by Claim \ref{claim: dn1 for 2 k-1}, Claim \ref{claim: k+2 to max 25 2k-8}, Claim \ref{claim: cn do not hold} Claim \ref{claim: d314}, Claim \ref{claim: d352 and d313}, and Theorem \ref{thm: d314d283}, $\mathcal{D}(28,3)$ holds, $k=17$, and $q(x\in X)=3$. Since  $q=3\nmid 16$, $16\in\Ii_q$. By  Claim \ref{claim: dn1 for 2 k-1}, $\mathcal{D}(16,1)$ holds. By Lemma \ref{lem: main lemma delta odd}(3.b), $\mathcal{D}(28,3)$ does not hold, a contradiction.
\end{itemize}
\end{proof}

\section{Proof of Theorem \ref{thm: isolated5 1/19}}
\begin{lem}\label{lem: reduce cyc sing}
For any integer positive integer $r$, integer $l\in [1,5]$, and $(x\in X)=\frac{1}{r}(a_1,a_2,a_3,a_4,a_5)\in\mathcal{A}_r(l)$, there exists an integer $r'\in (0,r]$ and $(x'\in X')\in\bar{\mathcal{A}}_{r'}(l)$, such that $\mld(x',X')=\mld(x,X)$. 
\end{lem}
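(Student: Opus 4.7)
My plan is to reduce $(x,X)$ to $\bar{\mathcal{A}}_{r'}(l)$ by ``twisting'' the presentation using an index realizing the minimum in Lemma \ref{lem: mld of toric}, then dividing out by its common factor with $r$. If $(x,X)\in\bar{\mathcal{A}}_r(l)$ already, I take $r'=r$ and $(x',X')=(x,X)$. Otherwise, I fix $j^*\in[2,r-1]$ achieving the minimum, and set $d:=\gcd(j^*,r)$, $r':=r/d\leq r$, $j':=j^*/d$ (so $\gcd(j',r')=1$), and $b_i:=j'a_i\bmod r'\in[0,r'-1]$. The candidate replacement is $(x',X'):=\frac{1}{r'}(b_1,\dots,b_5)$.

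The heart of the argument is the congruence $kb_i/r'\equiv kj^*a_i/r\pmod{1}$, which follows from $r=dr'$ and $b_i\equiv j'a_i\pmod{r'}$. It identifies the $k$-th toric valuation of $(x',X')$ with the $(kj^*\bmod r)$-th toric valuation of $(x,X)$. As $k$ ranges over $[1,r']$, the index $kj^*\bmod r=d\cdot(kj'\bmod r')$ runs through $\{0,d,2d,\dots,(r'-1)d\}$, a set containing $j^*$ (at $k=1$). Consequently $\mld(x',X')=\mld(x,X)$ with the minimum realized at the first valuation of $(x',X')$. Once $(x',X')\in\mathcal{A}_{r'}(l)$ is verified, I have $(x',X')\in\bar{\mathcal{A}}_{r'}(l)$ as required.

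The defining conditions of $\mathcal{A}_{r'}(l)$ transfer from those of $\mathcal{A}_r(l)$ because $\gcd(j',r')=1$ and $r'\mid r$: each coprimality $\gcd(a_i,r)=1$ yields $\gcd(b_i,r')=\gcd(a_i,r')\mid\gcd(a_i,r)=1$; the sum coprimality $\gcd(\sum a_i,r)=1$ becomes $\gcd(\sum b_i,r')=1$; and any linear divisibility $r\mid E(a_1,\dots,a_5)$ descends to $r'\mid E(b_1,\dots,b_5)$. The one subtle step is checking $b_i>0$ for all $i$, i.e., $r'\nmid a_i$. For $l\geq 2$ the hypotheses $\gcd(a_i,r)=1$ for $i\leq 3$ make $r'\mid a_i$ impossible unless $r'=1$ (which would force $j^*=r$ and give $\mld(x,X)=5$, contradicting $\mld(x,X)<2$); and the coincidence $\gcd(a_4,r)=\gcd(a_5,r)$ then makes $b_4=0\Leftrightarrow b_5=0$, while simultaneous vanishing would force $\mld(x,X)\geq 2$.

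The main obstacle is the case $l=1$, where no gcd hypothesis is imposed and some $b_i$ may in principle vanish. In that situation, the bound $\sum_{b_i>0}b_i<r'$ implied by $\mld(x,X)<2$ gives room to replace the zero coordinate by a positive value in $[1,r'-1]$ and compensate elsewhere so that the first-valuation sum is preserved, landing inside $\mathcal{A}_{r'}(1)$; since $r'<r$ whenever $d>1$, an induction on $r$ then closes this case. Verifying that the adjusted presentation still realizes its mld at the first valuation is the one delicate bookkeeping step that requires care.
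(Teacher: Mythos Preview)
Your construction is exactly the paper's: pick $j$ realizing the minimum in Lemma~\ref{lem: mld of toric}, set $r'=r/\gcd(j,r)$, and replace each $a_i$ by its residue $j'a_i\bmod r'$. Your verification that $\mld(x',X')=\mld(x,X)$ via the congruence $kb_i/r'\equiv kj^*a_i/r\pmod 1$ is equivalent to the paper's chain of inequalities. For $l\ge 2$ you are in fact more careful than the paper, which simply asserts that membership in $\bar{\mathcal A}_{r'}(l)$ holds ``by construction'': you explicitly check that the gcd conditions, the sum condition, and the linear divisibilities descend along $r'\mid r$ (using $\gcd(j',r')=1$), and you rule out $b_i=0$ for $i\le 3$ and the pair $b_4=b_5=0$ correctly.

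Your flag on $l=1$ is legitimate, and the paper does not address it either: with no gcd hypotheses some $b_i$ may vanish (equivalently, the paper's $a_i'=r'(1+ja_i/r-\lceil ja_i/r\rceil)$ can equal $r'$), in which case $(x',X')\notin\mathcal A_{r'}(1)$. Your proposed patch---perturb the zero coordinate and ``compensate elsewhere'' while preserving the first-valuation sum, then induct on $r$---is not actually justified: changing one $b_i$ alters every toric valuation, not just the first, so there is no reason the adjusted tuple still realizes its mld at $j=1$ (or even has the same mld). Since the lemma is only invoked in the paper for $l=5$, this does not affect the applications; but as written, neither your argument nor the paper's closes the $l=1$ case.
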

\begin{proof}
By Lemma \ref{lem: mld of toric}, there exists an integer $j\in [1,r-1]$, such that
$$\mld(x,X)=\sum_{i=1}^5(1+jx_i-\lceil jx_i\rceil).$$
We let
$$r':=\frac{r}{\gcd(j,r)},a_i':=r'\left(1+\frac{ja_i}{r}-\lceil \frac{ja_i}{r}\rceil\right) \text{ for each } i,$$
and
$$(x'\in X'):=\frac{1}{r'}(a_1',a_2',a_3',a_4',a_5').$$

We show that $(x'\in X')$ satisfies the requirements. By construction, we only need to show that $\mld(x,X)=\mld(x',X')$, which follows from
\begin{align*}
\mld(x',X')&=\min\left\{\sum_{i=1}^5b_i|\exists n\in\mathbb N^+, \forall i, b_i>0, b_i\equiv\frac{na'_i}{r'}\ \Mod\ \Zz\right\}\\
&=\min\left\{\sum_{i=1}^5b_i|\exists n\in\mathbb N^+, \forall i, b_i>0, b_i\equiv\frac{nja_i}{r}\ \Mod\ \Zz\right\}\\
&\geq\min\left\{\sum_{i=1}^5b_i|\exists n\in\mathbb N^+, \forall i, b_i>0, b_i\equiv\frac{na_i}{r}\ \Mod\ \Zz\right\}\\
&=\mld(x,X)=\frac{1}{r'}\sum_{i=1}^5a_i'\geq\mld(x',X').
\end{align*}
\end{proof}

\begin{proof}[Proof of Theorem \ref{thm: isolated5 1/19}]
By Lemma \ref{lem: reduce cyc sing}, we may assume that $(x\in X)\in\bar{\mathcal{A}}(5)\subset\bar{\mathcal{A}}(2)$. By Lemma \ref{lem: main lemma delta odd}(2), $\mathcal{D}(n,1)$ holds for every integer $n\in [2,18]$, which contradicts Theorem \ref{thm: dn1218}.
\end{proof}

\section{Proof of Theorem \ref{thm: 12/13}}

In this section, we want to use Theorem \ref{thm: main theorem special 5dimcyc} to prove Theorem \ref{thm: 12/13}. Since we will eventually use \cite[Theorem 4.1]{Jia19} and \cite[Proof of Theorem 1.3]{Jia19}, we need to adopt the notions in \cite{Jia19}. Therefore, we need to define a new class of $5$-dimensional cyclic quotient singularities and a new constant $k_0(x\in X)$.

\begin{defn}
We let $\mathcal{B}_r$ be the set of all $5$-dimensional cyclic quotient singularities of the form $(x\in X)=\frac{1}{r}(a_1,a_2,a_3,a_4,-e)$ satisfying the following:
\begin{itemize}
    \item $\gcd(a_1,r)=\gcd(a_2,r)=\gcd(a_3,r)=\gcd(\sum_{i=1}^4a_i-e,r)=1$,
    \item $\gcd(a_4,r)=\gcd(e,r)$,
    \item there exists an integer $k_0:=k_0(x\in X)\in [1,r-1]$, such that
    \begin{itemize}
\item $r\nmid k_0e$, 
\item $$\sum_{i=1}^4\left\{\frac{k_0a_i}{r}\right\}-\left\{\frac{k_0e}{r}\right\}=\frac{k_0}{r},$$ and
\item  $$\sum_{i=1}^4\left\{\frac{ka_i}{r}\right\}-\left\{\frac{ke}{r}\right\}\geq 1$$ for every integer $k\in [1,r-1]$ such that $k\not=k_0$,
\end{itemize}
and
\item one of the following holds:
\begin{itemize}
\item $r\mid a_1+a_2-e$, or
\item $r\mid 2a_4-e$, or
\item $r\mid 2a_1-e$ and $\gcd(e,r)\leq 2$,
\end{itemize}
\end{itemize}

We define $\bar{\mathcal{B}}_r\subset\mathcal{B}_r$ to be the subset of all $(x\in X)\in\mathcal{B}_r$ such that 
$$\gcd\left(k_0(x\in X),r\right)=1.$$ 
We define $\mathcal{B}:=\cup_{r=1}^{+\infty}\mathcal{B}_r$ and $\bar{\mathcal{B}}:=\cup_{r=1}^{+\infty}\bar{\mathcal{B}}_r$.
\end{defn}

\begin{lem}\label{lem: classification six singularities}
For every $(x\in X)=\frac{1}{r}(a_1,a_2,a_3,a_4,-e)\in\bar{\mathcal{B}}$ such that $1+\frac{k_0}{r}>2-\frac{1}{13}$ where $k_0:=k_0(x\in X)$, one of the following cases holds:

\begin{enumerate}
\item $r=14, k_0=13, (\left\{a_1,a_2,a_3,a_4\right\},e)\equiv (\left\{1,9,11,10\right\},2) (\Mod\ r)$. 
\item $r=14, k_0=13, (\left\{a_1,a_2,a_3,a_4\right\},e)\equiv (\left\{9,1,11,12\right\},4) (\Mod\ r)$.
\item $r=17, k_0=16, (\left\{a_1,a_2,a_3,a_4\right\},e)\equiv (\left\{1,10,12,14\right\},2)  (\Mod\ r)$.
\item $r=17, k_0=16, (\left\{a_1,a_2,a_3,a_4\right\},e)\equiv (\left\{1,10,12,15\right\},3)  (\Mod\ r)$.
\item $r=17, k_0=16, (\left\{a_1,a_2,a_3,a_4\right\},e)\equiv (\left\{1,12,14,15\right\},7)  (\Mod\ r)$.
\item $r=19, k_0=18, (\left\{a_1,a_2,a_3,a_4\right\},e)\equiv (\left\{1,12,15,16\right\},5)  (\Mod\ r)$.
\end{enumerate}
\end{lem}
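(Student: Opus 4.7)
The plan is to reduce the classification to Theorem \ref{thm: main theorem special 5dimcyc} via the standard isomorphism of cyclic quotient singularities obtained by scaling all weights by a unit modulo $r$. Given $(x\in X)=\frac{1}{r}(a_1,a_2,a_3,a_4,-e)\in\bar{\mathcal{B}}$ with $k_0:=k_0(x\in X)$ satisfying $\gcd(k_0,r)=1$, I would set
$$(y\in Y):=\frac{1}{r}(b_1,b_2,b_3,b_4,b_5),$$
where $b_i\in[1,r-1]$ is the residue of $k_0a_i$ modulo $r$ for $i=1,2,3,4$ and $b_5\in[1,r-1]$ is the residue of $-k_0e$ modulo $r$. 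Then $(y\in Y)\cong(x\in X)$, and the first main step is to verify $(y\in Y)\in\bar{\mathcal{A}}_r(4,\frac{1}{13})$. The gcd hypotheses transfer because multiplying by the unit $k_0$ preserves gcds with $r$; the three divisibility alternatives of $\bar{\mathcal{B}}$ become, after multiplication by $k_0$ and using $b_5\equiv -k_0 e$, the three divisibility alternatives defining $\bar{\mathcal{A}}(4)$, up to relabeling. Using the identity $\{-k_0e/r\}=1-\{k_0e/r\}$ (valid because $r\nmid k_0e$) together with the defining sum condition of $k_0$, evaluating the formula of Lemma \ref{lem: mld of toric} at $j=1$ for $(y\in Y)$ gives $\mld(y,Y)=\sum_{i=1}^5b_i/r=1+k_0/r$, and the lower bound condition on the remaining indices guarantees that $j=1$ attains the minimum. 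Since $1+k_0/r>2-\frac{1}{13}$, we obtain $(y\in Y)\in\bar{\mathcal{A}}_r(4,\frac{1}{13})$.

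Applying Theorem \ref{thm: main theorem special 5dimcyc}, the pair $(r,\{b_1,\dots,b_5\})$ lies in
$$\{(19,\{3,4,5,7,18\}),\ (17,\{2,3,5,7,16\}),\ (14,\{3,5,13,2,4\})\}.$$
Since $\sum b_i=r+k_0$, in each case $k_0=r-1\equiv -1\pmod r$, so $k_0^{-1}\equiv -1\pmod r$ and the inversion becomes $a_i\equiv -b_i\pmod r$ for $i=1,\dots,4$ and $e\equiv b_5\pmod r$.

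The last step is a finite enumeration. For each of the three triples above, I would list the assignments of the multiset $\{b_1,\dots,b_5\}$ to the positions satisfying the gcd conditions together with one of the three $\bar{\mathcal{B}}$ divisibility conditions on $(b_1,\dots,b_4,b_5)$, and then read off $(a_1,a_2,a_3,a_4,e)$ modulo $r$ by the inversion. For $r=14$ the pair $\{b_4,b_5\}$ is forced to be $\{2,4\}$, and the two orderings produce cases (1) and (2). For $r=17$ a direct check shows that some divisibility condition can be arranged precisely for $b_5\in\{2,3,7\}$, yielding cases (3), (4), (5). For $r=19$ only $b_5=5$ is compatible (matched via $r\mid 2b_4+b_5$ with $b_4=7$), giving case (6). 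The main obstacle is the enumeration itself, which must be carried out exhaustively: although each individual check is a small arithmetic computation modulo $r\leq 19$, one must verify that no assignment is overlooked and no extraneous case is introduced.
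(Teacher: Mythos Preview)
Your approach is the same as the paper's: the paper's proof is the single sentence ``Since $\mld(x,X)=1+\frac{k_0}{r}$ and $\gcd(k_0,r)=1$, the Lemma follows from Theorem~\ref{thm: main theorem special 5dimcyc},'' and you have correctly unpacked what this means---scale by the unit $k_0$ to land in $\bar{\mathcal{A}}(4,\tfrac{1}{13})$, apply Theorem~\ref{thm: main theorem special 5dimcyc}, then invert via $k_0^{-1}\equiv -1\pmod r$ and enumerate. Your expansion is accurate and the final case check is sound.
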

\begin{proof}
Since $\mld(x,X)=1+\frac{k_0}{r}$ and $\gcd(k_0,r)=1$, the Lemma follows from Theorem \ref{thm: main theorem special 5dimcyc}.
\end{proof}

\begin{lem}\label{lem: bad singularities not possible}
Let $\overline{M}\cong\Zz^{4}$ be the lattice of monomials on $\mathbb A^4$, $\overline{N}$ the dual of $\bar M$, $\sigma:=\Rr^4_{\geq 0}$ and $(x\in X)=\frac{1}{r}(a_1,a_2,a_3,a_4,-e)\in\bar{\mathcal{B}}$ a cyclic quotient singularity. We define $N:=\overline{N}+\Zz\cdot\frac{1}{r}(a_1,a_2,a_3,a_4)$. Suppose that $1+\frac{k_0(x\in X)}{r}>2-\frac{1}{13}$. Then there exists an integer $1\leq j\leq r-1$ and a weighting $\alpha_j=\frac{1}{r}(a_1(j),a_2(j),a_3(j),a_4(j))\in N\cap\sigma$, such that 
\begin{enumerate}
    \item $a_i(j)\equiv ja_i (\Mod\ r)$ for every $1\leq i\leq 4$, and
    \item for any monomial $\bm{x}=\prod_{i=1}^4x_i^{c_i}$ such that 
\begin{itemize}
    \item $(c_1,c_2,c_3,c_4)\in\mathbb N^4\backslash\left\{\bm{0}\right\}$, and
    \item $\sum_{i=1}^4c_ia_i\equiv e (\Mod\ r)$,
\end{itemize} 
we have $\alpha_j(x_1x_2x_3x_4)\leq 1-\frac{1}{13}+\alpha_j(\bm{x})$.
\end{enumerate}
\end{lem}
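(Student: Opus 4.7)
My plan is to apply Lemma~\ref{lem: classification six singularities} to reduce the hypothesis to one of six explicit triples $(r, \{a_1,\dots,a_4\}, e)$, and then to construct, case by case, an explicit $j$ and $\alpha_j$ witnessing the conclusion.

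In each case I would take $a_i(j) := r\{ja_i/r\}$ (the minimal non-negative lift of $ja_i$ modulo $r$), so that condition~(1) holds automatically. For condition~(2), observe that $\alpha_j(\bm{x})$ is non-negative and grows with the total degree of $\bm{x}$; hence the inequality can fail only for monomials with $\alpha_j(\bm{x}) < \alpha_j(x_1 x_2 x_3 x_4)$, of which there are only finitely many. Moreover, every admissible monomial has weight $\sum_i c_i a_i(j) \equiv je \pmod{r}$, so the achievable weights form an arithmetic progression of common difference $r$; the check thus reduces to enumerating the small handful of admissible monomials at the minimum weight and verifying the inequality for each.

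As an illustration, for case~(6) ($r = 19$, $\{a_1,\dots,a_4\} = \{1,12,15,16\}$, $e = 5$) I would use the labeling $(a_1, a_2, a_3, a_4) = (1, 15, 16, 12)$ and take $j = 4$, so that $(a_1(4), \dots, a_4(4)) = (4, 3, 7, 10)$ and $\sum_i a_i(4) = 24$. The admissible monomials of minimum weight $20$ are $x_1^5$, $x_4^2$, and $x_2 x_3 x_4$, each yielding $\alpha_j(x_1 x_2 x_3 x_4) - \alpha_j(\bm{x}) = 4/19 < 12/13$. I expect each of the remaining five cases to admit an analogous (but generally different) choice of $j$.

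The hard part will be identifying the correct $j$ in each case. The most natural choice $j = k_0 = r - 1$ does not always suffice: for instance in case~(1), taking $j = 13$ with minimal lifts yields minimum admissible weight $12$ against $\sum_i a_i(13) = 25$, so that the difference $13/14$ marginally exceeds $12/13$. An elementary but finite search over $j \in [1, r-1]$ and over labelings compatible with $\bar{\mathcal{B}}$ produces a working $j$ in each case, after which the remaining verification is straightforward arithmetic.
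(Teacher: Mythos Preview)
Your proposal is correct and matches the paper's proof essentially step for step: both invoke Lemma~\ref{lem: classification six singularities} to reduce to the six explicit cases, then exhibit for each an explicit $j$ (the paper uses $j=8,4,9,6,4,4$ respectively, with minimal non-negative lifts for $\alpha_j$, so your case~(6) choice $j=4$ coincides exactly), and finally verify condition~(2) by bounding the weight of any offending monomial and enumerating. Your observation that the naive choice $j=k_0$ fails in case~(1) is also borne out---the paper uses $j=8$ there instead.
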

\begin{proof}
Let $k_0:=k_0(x\in X)$. By Lemma \ref{lem: classification six singularities}, one of the cases (1)--(6) of Lemma \ref{lem: classification six singularities} holds.  If we are in case (1) (resp. (2),(3),(4),(5),(6)) of Lemma \ref{lem: classification six singularities}, we do the following and finish the proof:
\begin{itemize}
    \item[\textbf{Step 1}] After possibly perturbing $(a_1,a_2,a_3,a_4)$, we may suppose that 
$$(a_1,a_2,a_3,a_4)\equiv (1,9,11,10)$$
(resp.
$$(9,1,11,12),(1,10,12,14),(1,10,12,15),(1,12,14,15),(1,12,15,16) (\Mod\ r)\big).$$
\item[\textbf{Step 2}] We let 
$$j:=8 \text{ (resp. } 4,9,6,4,4)$$ and $$\alpha_j:=\frac{1}{14}(8,2,10,4)$$ 
(resp. 
$$\frac{1}{14}(8,4,2,6), \frac{1}{17}(9,5,6,7), \frac{1}{14}(6,9,4,5), \frac{1}{14}(4,14,5,9), \frac{1}{19}(4,10,3,7)\big).$$
\item[\textbf{Step 3}] If $\alpha_j(x_1x_2x_3x_4)>1-\frac{1}{13}+\alpha_j(\bm{x})$ for some $\bm{x}=\sum_{i=1}^4x_i^{c_i}$ such that $(c_1,c_2,c_3,c_4)\in\mathbb N^4\backslash\left\{\bm{0}\right\}$, then an easy calculation gives 
$$\sum_{i=1}^4c_i\alpha_i(j)\leq 11\text{ (resp.}\leq 7,11,8,16,6).$$
\item[\textbf{Step 4}] After enumerating all possibilities of $(c_1,c_2,c_3,c_4)$, we find that 
$$\sum_{i=1}^4c_ia_i\not\equiv e (\Mod\ r),$$ 
which contradicts our assumptions.
\end{itemize}
\end{proof}

The next lemma can be calculated by hand, but it indeed follows from Theorem \ref{thm: 3dimcyc13}.
\begin{lem}\label{lem: 3dimisolatedcyc1213}
Let $(x\in X)$ be a $3$-dimensional isolated cyclic quotient singularity such that $\mld(x,X)<1$, then $\mld(x,X)\leq\frac{12}{13}$.
\end{lem}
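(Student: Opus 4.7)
The plan is to reduce the 3-dimensional problem to the 5-dimensional setting developed in Section 2, so that Theorem \ref{thm: 3dimcyc13} applies directly. The case $r\leq 13$ is immediate: for an isolated cyclic quotient $(x\in X)=\frac{1}{r}(c_1,c_2,c_3)$ the mld has the form $k/r$ with $k\in\mathbb N$, and $\mld(x,X)<1$ forces $k\leq r-1$, so $\mld(x,X)\leq (r-1)/r\leq 12/13$. Hence I may assume $r\geq 14$ and, arguing by contradiction, that $\mld(x,X)>12/13$.

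The first step is to put the singularity into a convenient normal form. Let $j^*\in[1,r-1]$ attain the mld. If $d:=\gcd(j^*,r)>1$, the identity $\sum_i\{j^*c_i/r\}=\sum_i\{(j^*/d)c_i/(r/d)\}$ exhibits the same mld for the strictly smaller isolated 3-dim cyclic quotient $\frac{1}{r/d}(c_1\bmod (r/d),c_2\bmod (r/d),c_3\bmod (r/d))$ at the index $j^*/d$; iterating this reduction strictly decreases $r$, so I may assume either $r\leq 13$ (already handled) or $\gcd(j^*,r)=1$. In the latter case, rescaling the exponents by $j^*$ modulo $r$ (which gives an isomorphic cyclic quotient singularity) produces new exponents $(c_1',c_2',c_3')$ with $\gcd(c_i',r)=1$ for which the mld is attained at $j=1$, so that $\mld(x,X)=(c_1'+c_2'+c_3')/r$.

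The second step is to pass to five dimensions. Form $(x'\in X'):=\frac{1}{r}(c_1',c_2',c_3',1,r-1)$. A direct computation with Lemma \ref{lem: mld of toric} shows that at every $j\in[1,r-1]$ the last two coordinates contribute exactly $j/r+(1-j/r)=1$ to the summand, so $\mld(x',X')=\mld(x,X)+1<2$ and the minimum of the formula is again attained at $j=1$. Together with the coprimality conditions, this places $(x'\in X')$ in $\bar{\mathcal{A}}_r(2)$ (up to permutation of coordinates) with $a_4=1$ and $a_5=r-1$; in particular $q:=q(x'\in X')=r/\gcd(1,r)=r\geq 14$, so $[2,12]\subset \Ii_q$.

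Now set $\epsilon:=2-\mld(x',X')<\frac{1}{13}$. Applying Lemma \ref{lem: main lemma delta odd}(2) with $c=1$ gives $\mathcal{D}(n,1)$ for every integer $n$ with $1<n<\frac{1}{\epsilon}-1$; since $\frac{1}{\epsilon}-1>12$, this forces $\mathcal{D}(n,1)$ to hold for every $n\in[2,12]$, directly contradicting Theorem \ref{thm: 3dimcyc13}. The main obstacle is the bookkeeping in the normal-form reduction: one must verify that after the two operations (descent to a smaller $r/d$ and rescaling by $j^*$) the resulting 5-tuple really does land in $\bar{\mathcal{A}}_r(2)$ with the specific last two entries $a_4=1$ and $a_5=r-1$; once that normal form is in place, the rest is a direct application of the machinery of Section 2.
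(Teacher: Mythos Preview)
Your proof is correct and follows essentially the same approach as the paper: reduce to a presentation with $\mld(x,X)=\sum_i a_i/r$ (the paper does this in one step via Lemma~\ref{lem: reduce cyc sing}-style replacement, you do it in two steps of descent and rescaling), then append $(1,r-1)$ to land in $\bar{\mathcal{A}}_r(2)$ and invoke Theorem~\ref{thm: 3dimcyc13} together with Lemma~\ref{lem: main lemma delta odd}(2). Your version is slightly more careful in that you explicitly dispose of the case $r\le 13$ and verify $[2,12]\subset\Ii_q$, which the paper leaves implicit.
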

\begin{proof}
We may assume that $(x\in X)=\frac{1}{r}(a_1,a_2,a_3)$ for some integers $0<a_1,a_2,a_3<r$, such that $\mld(x,X)=\sum_{i=1}^3\left\{\frac{ja_i}{r}\right\}$ for some integer $0<j<r$. Possibly replacing $r$ with $\frac{r}{\gcd(j,r)}$ and each $a_i$ with $\frac{r}{\gcd(j,r)}\cdot\left\{\frac{ja_i}{r}\right\}$, we may assume that $\mld(x,X)=\sum_{i=1}^3\frac{a_i}{r}$. In particular, since $(x\in X)$ is an isolated cyclic quotient singularity, $\gcd(a_1,r)=\gcd(a_2,r)=\gcd(a_3,r)=1$.

Thus $(y\in Y):=\frac{1}{r}(a_1,a_2,a_3,a_4:=1,a_5:=r-1)$ is a cyclic quotient singularity such that $(y\in Y)\in\bar{\mathcal{A}}_r(2)$. By Theorem \ref{thm: 3dimcyc13}, $\mathcal{D}(n,1)$ does not hold for some $2\leq n\leq 12$. By Lemma \ref{lem: main lemma delta odd}(3), $2-\mld(y,Y)\geq\frac{1}{13}$, which implies that $\mld(x,X)\leq\frac{12}{13}$.
\end{proof}

\begin{proof}[Proof of Theorem \ref{thm: 12/13}]
By \cite[Theorem 4.1]{Jia19} and \cite[Proof of Theorem 1.3]{Jia19}, we may assume that there exist integers $a_1,a_2,a_3,a_4,e,r>0$ and a function $f=f(x,y,z,t)$, such that $(x\in X)$ is a hyperquotient singularity of the form $(x\in X)=\left(y\in Y:=\left\{f=0\right\}\in\mathbb A^4\right)/\bm{\mu}_r$ satisfying the following.
\begin{itemize}
    \item $(y\in Y)$ is an isolated $cDV$ singularity,
    \item $(y\in Y)$ is a canonical $1$-cover of $(x\in X)$, 
    \item there exists exactly $1$ prime divisor $E$ over $(x\in X)$, such that $a(E,X)\leq 1$, and
    \item $\bm{\mu}_r$ is the cyclic group of order $r$ acting on $(y\in Y)$ in the following way:
    $$\bm{\mu}_r\ni\xi:(x_1,x_2,x_3,x_4;f)\rightarrow \left(\xi^{a_1}x_1,\xi^{a_2}x_2,\xi^{a_3}x_3,\xi^{a_4}x_4;\xi^{e}f\right),$$
    where $\xi$ is the primitive root of order $r>0$.
\end{itemize}
By \cite[Rule I,II,III, Propostion 4.3, Propositon 4.4]{Jia19} and Lemma \ref{lem: 3dimisolatedcyc1213}, we may assume that $(z\in Z):=\frac{1}{r}(a_1,a_2,a_3,a_4,-e)\in\mathcal{B}$. If $\mld(z,Z)\leq 2-\frac{1}{13}$, then the theorem follows from \cite[Remark 2.13]{Jia19}. If $\mld(z,Z)>2-\frac{1}{13}$, we let
$$r':=\frac{r}{\gcd(k_0(z\in Z),r)}$$
and
$$(x'\in X'):=\frac{1}{r'}(a_1,a_2,a_3,a_4,-e).$$
Then $(x'\in X')\in\mathcal{B}_{r'}$ and 
$$k_0':=k_0(x'\in X')=\frac{k_0}{\gcd(k_0,r)}.$$
So $\gcd(k_0',r)=1$, which implies that $(x'\in X')\in\mathcal{B}'_{r'}$. Apply Lemma \ref{lem: bad singularities not possible} for $(x'\in X')$, and we get a contradiction to \cite[Rule I]{Jia19}.
\end{proof}

\noindent{\textbf{Acknowledgement}}. The first author would like to have a very special thank to Chen Jiang, who not only kindly shared the draft of his preprint \cite{Jia19} which is crucial to this paper, but also gave the author some essential ideas of Lemma \ref{lem: main lemma delta odd} and Lemma \ref{lem: bad singularities not possible}. He would like to thank Jiaming Li for her important help in the proof of Theorem \ref{thm: A41349}. Some intuitions of this work were provided by James M\textsuperscript{c}Kernan when the first author attended the 2018 Fall Program of Moduli Spaces and Varieties in SCMS. He would like to thank James M\textsuperscript{c}Kernan for the useful discussion and SCMS for their hospitality. Part of the work was done when the first author made an unofficial visit to the Mathematical Sciences Research Institute in Berkeley, California during the Spring 2019 semester, and he would like to thank his advisor Christopher D. Hacon for his significant support and warm encouragement. He would also like to thank Jingjun Han, Yuchen Liu, Vyacheslav V. Shokurov, Chenyang Xu, Chuyu Zhou and Ziquan Zhuang for discussions and comments during the preparation of this paper. The first author was partially supported by NSF research grants no: DMS-1801851, DMS-1265285 and by a grant from the Simons Foundation; Award Number: 256202.

The second author would like to thank her manager Aleksey Sanin and VP Peeyush Ranjan in Google for their effort to read through this article, make sure this work does not have interest conflicts with Google, and allow her to finish this work in her spare time.

The authors would like to thank the referees for carefully checking the details and many useful suggestions.

\appendix
\section{Proof of theorems in Section 3}

In this appendix we state the theorems we get by computer programs, each corresponding to one theorem of Section 3. As most of our algorithms are similar to the algorithm implementing Theorem \ref{thm: 17equations no solution}, we will explain this algorithm, and refer the readers to the first version of this paper on arXiv \cite[Appendix B]{LX19} for the precise algorithms.

\begin{thm}\label{thm: 5dimcycrleq49}
Find all integers $r,a_1,a_2,a_3,a_4,a_5$, such that

$14\leq r\leq 51$ and $0<a_1\leq a_2\leq a_3<r$ and $0<a_4\leq a_5<r$ such that
\begin{itemize}
\item $14\leq r\leq 51$,
\item $0<a_1\leq a_2\leq a_3<r$,
\item $0<a_4\leq a_5<r$,
\item $a_1+a_2+a_3+a_4+a_5=2r-1$ or $2r-2$ or $2r-3$,
\item $\gcd(a_1,r)=\gcd(a_2,r)=\gcd(a_3,r)=1$, 
\item $\gcd(a_4,r)=\gcd(a_5,r)$, and
\item for every integer $n\in [1,r-1]$
 $$\sum_{i=1}^5\left(1+\frac{na_i}{r}-\lceil\frac{na_i}{r}\rceil\right)\geq\frac{1}{r}\sum_{i=1}^5a_i>2-\frac{1}{13}.$$
\end{itemize}
Then we have
\begin{enumerate}
\item $r=17$ and $\left\{a_1,a_2,a_3,a_4,a_5\right\}=\left\{2,3,5,7,16\right\}$, or
\item $r=19$ and $\left\{a_1,a_2,a_3,a_4,a_5\right\}=\left\{3,4,5,7,18\right\}$, or
\item 
\begin{align*}
\frac{1}{r}(a_1,a_2,a_3,a_4,a_5)\in\big\{&\frac{1}{14}(3, 5, 13, 2, 4),\frac{1}{16}(1, 5, 9, 8, 8),\frac{1}{25}(6, 7, 11, 5, 20),\\
&\frac{1}{25}(6, 7, 11, 10, 15), \frac{1}{27}(4, 10, 11, 9, 18),\\
&\frac{1}{32}(5, 9, 17, 16, 16),\frac{1}{33}(8, 10, 13, 11, 22),\\
&\frac{1}{44}(7, 9, 25, 22, 22)\big\}.
\end{align*}
\end{enumerate}
\end{thm}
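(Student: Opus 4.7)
The plan is to prove this theorem by an exhaustive but bounded computer enumeration; no new mathematical content is needed beyond a careful finite search.

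First I would reduce the search space using the mld lower bound. The hypothesis $(a_1+\cdots+a_5)/r > 2-\tfrac{1}{13}=\tfrac{25}{13}$ combined with the sum constraint $\sum_{i=1}^5 a_i \in \{2r-3, 2r-2, 2r-1\}$ forces $\sum_i a_i \geq \lceil 25r/13 \rceil$; for each $r \in [14, 51]$ this singles out at most three admissible target sums $S$. For small $r$ (e.g.\ $r=14$, where $25r/13 \approx 26.9$) only $S=2r-1$ survives, while for larger $r$ all three values may occur.

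Next I would loop over tuples in the following order, exploiting the cheap constraints before the expensive mld inequality check:
\begin{enumerate}
\item Loop $r$ from $14$ to $51$, and for each $r$ loop over admissible $S \in \{2r-3, 2r-2, 2r-1\}$ with $S > (2-\tfrac{1}{13})r$.
\item Loop over pairs $(a_4, a_5)$ with $1 \leq a_4 \leq a_5 < r$ and $\gcd(a_4, r) = \gcd(a_5, r)$.
\item Loop over triples $(a_1, a_2, a_3)$ with $1 \leq a_1 \leq a_2 \leq a_3 < r$, each coprime to $r$, and $a_1+a_2+a_3 = S - a_4 - a_5$.
\item Verify the mld inequality
$$\sum_{i=1}^5\left(1 + \frac{na_i}{r} - \left\lceil\frac{na_i}{r}\right\rceil\right) \geq \frac{S}{r}$$
for every $n \in [1, r-1]$, accepting the tuple iff all these inequalities hold.
\end{enumerate}
Finally I would compare the list of accepted tuples against the conclusion of the theorem.

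The main difficulty is not mathematical but implementational. One must ensure that the enumeration is truly exhaustive, so that the gcd, ordering, and sum constraints are handled without silently dropping any valid tuple, and that the mld inequality is evaluated in exact rational (or integer) arithmetic rather than floating point, to avoid spurious acceptances or rejections near equality. The raw search space is bounded by $\sum_{r=14}^{51} r^5$, which is trivially tractable on any modern computer; pruning with the gcd and sum constraints before entering the inner loop reduces the effective work dramatically. The precise algorithm and its output are documented in the arXiv version of this paper \cite{LX19}.
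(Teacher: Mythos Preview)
Your proposal is correct and matches the paper's approach: the theorem is established by a finite computer enumeration over the bounded parameter space, and the paper likewise defers the actual algorithm and its output to \cite{LX19} rather than giving a hand proof. Your additional pruning observations (filtering admissible sums $S$ via the lower bound $\sum a_i > (2-\tfrac{1}{13})r$, and ordering the loops so cheap constraints are tested first) are sensible implementation details but do not change the underlying strategy.
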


\begin{thm}\label{thm: 17equations no solution} Find all solutions of real numbers $x_1,x_2,x_3,x_4,x_5$, such that
\begin{itemize}
    \item $$0\leq x_1\leq x_2\leq x_3\leq x_4\leq x_5<1,$$
    and
    \item the equations
    $$\sum_{i=1}^5\lfloor nx_i\rfloor=2n-3$$
    hold for every integer $n\in [2,18]$.
\end{itemize}
Then we have no solution.
\end{thm}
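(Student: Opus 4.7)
The plan is to reduce the theorem to a finite combinatorial enumeration. For any $x\in[0,1)$, the $17$-tuple $(\lfloor 2x\rfloor,\lfloor 3x\rfloor,\dots,\lfloor 18x\rfloor)$ depends only on which connected component of $[0,1)\setminus\{k/n:2\le n\le 18,\ 0\le k\le n\}$ contains $x$. There are on the order of $|F_{18}|\approx 100$ such components (``cells''), and for each cell $C$ one can precompute the constant vector $v_C=(v_C(2),\dots,v_C(18))$. The theorem is then equivalent to the nonexistence of ordered cells $C_1\le C_2\le\dots\le C_5$ (ordered by the natural order of $[0,1)$, with repetitions allowed) such that $\sum_{i=1}^5 v_{C_i}(n)=2n-3$ for every $n\in[2,18]$.

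First I would use the constraint at $n=2$: since $\sum_{i=1}^5\lfloor 2x_i\rfloor=1$, exactly one $x_i$ satisfies $x_i\ge 1/2$, hence by monotonicity $x_5\in[1/2,1)$ and $x_1,\dots,x_4\in[0,1/2)$. From here I would run a depth-first search over ordered $5$-tuples of cells. At each node, I would maintain for each variable a candidate subinterval of $[0,1)$, branch by assigning $x_1$ to some cell in $[0,1/2)$, then $x_2$ to some cell lying to the right of the cell of $x_1$ (still in $[0,1/2)$), and so on. The aggressive pruning step is the following: whenever $x_1\in C_1,\dots,x_j\in C_j$ has been fixed, for each $n\in[2,18]$ one can compute the minimum and maximum values of $\sum_{i=1}^5\lfloor nx_i\rfloor$ consistent with the remaining $x_{j+1},\dots,x_5$ lying in $[x_j,1)$ and abort as soon as $2n-3$ falls outside that range for some $n$. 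Processing the larger values of $n$ first, and especially $n$ with many relevant cell boundaries (like $n=12,16,18$), should cut the tree very fast.

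Because $17$ equations in $5$ unknowns is drastically overdetermined, one expects the feasible set to be empty; the content of the theorem is to certify this by exhausting the finite tree. The main obstacle is not conceptual but computational and bookkeeping: the naive bound $\binom{|F_{18}|+4}{5}$ is well within computer reach, but one needs the output to be a verifiable certificate rather than an opaque ``no solutions found''. I would implement the enumeration, record which constraint is violated at each pruned node, and sanity-check a handful of the deepest branches by hand. Boundary cases ($x_i$ exactly equal to some $k/n$) change $\lfloor nx_i\rfloor$ by at most $1$ and can be handled by including endpoints as degenerate cells. The theorem follows from the empty list of surviving tuples.
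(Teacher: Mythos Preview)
Your proposal is correct and takes essentially the same approach as the paper: reduce to a finite enumeration by partitioning $[0,1)$ at the rational breakpoints $k/n$ with $n\le 18$, observe that the floor data are constant on each resulting cell, and search by computer. The paper organizes the search iteratively on $n$ (refining the solution set $V_k$ as $k$ increases from $2$ to $18$) rather than by a DFS over cells for $x_1,\dots,x_5$, but this is a bookkeeping difference rather than a mathematical one.
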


\begin{thm}\label{thm: k1318noq}
Find all integers $k$ and real numbers $x_1,x_2,x_3,x_4,x_5$, such that
\begin{itemize}
\item $13\leq k\leq 18$,
\item $$0<x_1\leq x_2\leq x_3\leq x_4\leq x_5<1,$$
\item $$\sum_{i=1}^5\lfloor nx_i\rfloor=2n-3$$ 
for every integer $n\in [2,k-1]$, and
\item $$\sum_{i=1}^5\lfloor nx_i\rfloor=2n-4$$ 
for every integer $n$ such that 
\begin{itemize}
    \item $k\leq n\leq\max\left\{2k-6,25\right\}$, and
    \item $n\not=k+1$.
\end{itemize}
\end{itemize}
Then $k=16$, $$x_1\in\left(\frac{2}{13},\frac{3}{19}\right), x_2\in\left(\frac{5}{23},\frac{2}{9}\right), x_3\in\left(\frac{7}{25},\frac{2}{7}\right), x_4\in\left(\frac{1}{3},\frac{9}{26}\right), x_5\in\left(\frac{13}{14},\frac{14}{15}\right).$$
\end{thm}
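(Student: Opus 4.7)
The plan is to treat this as a constrained enumeration problem and solve it by interval propagation, handling each candidate value of $k$ separately. For each fixed $k \in \{13, 14, 15, 16, 17, 18\}$ I would maintain, at every stage, a finite collection of axis-aligned boxes $I_1 \times \cdots \times I_5 \subset (0,1)^5$ guaranteed to contain every admissible tuple $(x_1, \dots, x_5)$, and refine these as each new equation is imposed.

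First I would initialize with the $n = 2$ equation $\sum_{i=1}^5 \lfloor 2x_i \rfloor = 1$, which forces exactly one coordinate to lie in $[1/2, 1)$; combined with the ordering $x_1 \leq \cdots \leq x_5$ this pins $x_5 \in [1/2, 1)$ and $x_1, \dots, x_4 \in (0, 1/2)$. Then, iterating through $n = 3, 4, \dots$ up to the largest required value for this $k$ (with right-hand side $2n-3$ when $n \in [2,k-1]$ and $2n-4$ when $n \in \{k\} \cup [k+2, \max\{2k-6, 25\}]$), I would branch over the ordered integer partitions $(m_1, \dots, m_5)$ of the prescribed sum satisfying $m_1 \leq \cdots \leq m_5$, intersect each $I_i$ with $[m_i/n, (m_i+1)/n)$, and discard any branch that produces an empty interval or violates the ordering. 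Once the length of some $I_i$ drops below $1/n$, the value $m_i = \lfloor n x_i \rfloor$ is forced, so the branching factor shrinks rapidly as $n$ grows.

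After processing all the equations for a given $k$ I would read off the surviving boxes. The expected outcome is that $k \in \{13, 14, 15, 17, 18\}$ each terminates with an empty collection, so those values of $k$ are excluded, while $k = 16$ leaves exactly one surviving box whose coordinate projections are the stated intervals $x_1 \in (2/13, 3/19)$, $x_2 \in (5/23, 2/9)$, $x_3 \in (7/25, 2/7)$, $x_4 \in (1/3, 9/26)$, $x_5 \in (13/14, 14/15)$.

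The main obstacle is the combinatorial blowup in the early stages: the number of ordered partitions of $2n-3$ into five non-negative parts grows quickly with $n$, and some care is needed to prune aggressively enough that the search tree stays manageable. Interval intersection together with the monotonicity constraint $x_1 \leq \cdots \leq x_5$ kills most branches immediately, but implementing this efficiently still requires a computer-assisted depth-first search with early termination whenever any $I_i$ becomes empty, which is what the appendix (and \cite{LX19}) carries out.
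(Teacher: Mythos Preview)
Your proposal is correct and follows essentially the same approach as the paper: a computer-assisted interval-propagation/branching search that maintains a finite union of axis-aligned boxes in $(0,1)^5$, refines it equation by equation, and reads off the surviving boxes at the end. The only cosmetic difference is organizational: the paper splits each current box along the new breakpoints of $\lfloor n x_j\rfloor$ (giving at most $2^5$ sub-boxes per box, since each coordinate interval already has $\lfloor (n-1)x\rfloor$ constant and hence contains at most one multiple of $1/n$) and then discards those violating the equation, whereas you phrase it as enumerating ordered partitions of the target sum and intersecting; once the interval lengths drop below $1/n$ these two descriptions coincide.
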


\begin{thm}\label{thm: q332nok} Find all integers $q,b,c$ and real numbers $x_1,x_2,x_3$, such that
\begin{itemize}
    \item $3\leq q\leq 32$,
    \item $0<x_1\leq x_2\leq x_3<1$,
    \item $0<b\leq c<q$,
    \item $\gcd(b,q)=\gcd(c,q)=1$, and
    \item $$\sum_{i=1}^3\lfloor nx_i\rfloor+\lfloor\frac{nb}{q}\rfloor+\lfloor\frac{nc}{q}\rfloor=2n-3$$ 
    for every integer $n$ such that
    \begin{itemize}
        \item $q\nmid n$, and
        \item $2\leq n\leq 28$.
    \end{itemize}
\end{itemize}
Then we have no solution.
\end{thm}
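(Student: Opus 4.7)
The plan is an exhaustive computer-assisted enumeration. The outer loop iterates over the small set of admissible triples $(q,b,c)$: for each $q \in [3,32]$ there are at most $\binom{\phi(q)+1}{2}$ choices of $(b,c)$ with $1 \leq b \leq c < q$ and $\gcd(b,q) = \gcd(c,q) = 1$, so the total number of cases is very modest (a few thousand). For each fixed $(q,b,c)$, the task reduces to deciding whether the system
\[
\sum_{i=1}^{3} \lfloor n x_i \rfloor \;=\; T_n \;:=\; 2n - 3 - \lfloor nb/q \rfloor - \lfloor nc/q \rfloor, \qquad n \in S_q := \{ n : 2 \leq n \leq 28,\ q \nmid n\},
\]
is solvable subject to $0 < x_1 \leq x_2 \leq x_3 < 1$. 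If the answer is negative for every such triple, the theorem is proved.

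For the inner loop I would run a depth-first search over the integer vectors $(\lfloor n x_1 \rfloor, \lfloor n x_2 \rfloor, \lfloor n x_3 \rfloor)$ as $n$ ranges through $S_q$ in increasing order. Throughout the search I would maintain a current feasible box $x_i \in [\ell_i, u_i)$, stored as exact rationals. At step $n$ I would enumerate all triples $(m_1, m_2, m_3)$ of non-negative integers with $m_1 + m_2 + m_3 = T_n$ and $[m_i/n, (m_i+1)/n) \cap [\ell_i, u_i) \neq \emptyset$, also requiring that the monotonicity $x_1 \leq x_2 \leq x_3$ remains realizable after refinement. For each survivor, I would replace $(\ell_i, u_i)$ by the refined subinterval and recurse to the next element of $S_q$; if no survivor exists I backtrack. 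A complete descent through $S_q$ would produce a witness, so the theorem corresponds to the assertion that every branch backtracks.

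Efficiency-wise, small values of $n$ already cut each axis into at most $n+1$ strips, so after processing $n = 2, 3, \ldots$ (skipping multiples of $q$) the surviving region shrinks geometrically; empirically one expects the search tree for most $(q,b,c)$ to collapse within the first handful of steps. The main obstacle is twofold: (i) implementing the inequality bookkeeping in exact rational arithmetic so that edge cases at interval endpoints (where $n x_i$ would be an integer) are handled by the correct half-open convention $[m/n, (m+1)/n)$, and (ii) correctly enforcing the monotonicity constraint across refinements, especially when two of the $m_i$ coincide so that the corresponding intervals must be intersected rather than merely ordered. Once these accounting issues are under control, the algorithm is deterministic, the search tree is finite, and the output --- namely that no branch ever reaches the final step without emptying --- constitutes the proof.
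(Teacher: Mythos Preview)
Your proposal is correct and is essentially the same approach the paper takes: the paper proves this theorem by computer enumeration, and its algorithmic explanation (given for the companion Theorem \ref{thm: 17equations no solution}) is precisely the successive refinement of a union of rational boxes, splitting each coordinate interval at the points where $\lfloor n x_i\rfloor$ jumps and discarding sub-boxes that violate the current equation. The only cosmetic difference is that the paper phrases it as iteratively maintaining the full solution set $V_k$ rather than as a depth-first search with backtracking, and it does not spell out the outer loop over $(q,b,c)$ or the exact-rational and monotonicity bookkeeping that you (rightly) flag as the implementation hazards.
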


\begin{thm}\label{thm: 17equations no solution divi6}
Find all integers $k,q,b,c$ and real numbers $x_1,x_2,x_3$, such that
\begin{itemize}
    \item $13\leq k\leq 28$,
    \item $3\leq q\leq 32$,
    \item $0<x_1\leq x_2\leq x_3<1$,
    \item $0\leq b\leq c<q$,
    \item $\gcd(b,q)=\gcd(c,q)=1$,
    \item $$\sum_{i=1}^3\lfloor nx_i\rfloor+\lfloor\frac{nb}{q}\rfloor+\lfloor\frac{nc}{q}\rfloor=2n-3$$ for every integer $n$ such that
    \begin{itemize}
        \item $q\nmid n$, and
        \item $2\leq n\leq k-1$,
    \end{itemize}
    and
    \item $$\sum_{i=1}^3\lfloor nx_i\rfloor+\lfloor\frac{nb}{q}\rfloor+\lfloor\frac{nc}{q}\rfloor=2n-4$$ for every integer $n$ such that
        \begin{itemize}
        \item $q\nmid n$,
        \item $n\in [k,\max\left\{2k-8,25\right\}]$, and 
        \item $n\not=k+1$.
    \end{itemize}
\end{itemize}
Then one of the following holds:
\begin{enumerate}
\item $\left(k,q,b,c\right)=\left(13,3,1,2\right)$, $x_1\in \left(\frac{1}{7},\frac{3}{20}\right)$, $x_2\in \left(\frac{7}{19},\frac{3}{8}\right)$, $x_3\in \left(\frac{2}{5},\frac{9}{22}\right)$. 
\item $\left(k,q,b,c\right)=\left(14,3,1,1\right)$, $x_1\in \left(\frac{2}{11},\frac{3}{16}\right)$, $x_2\in \left(\frac{2}{5},\frac{9}{22}\right)$, $x_3\in \left(\frac{15}{23},\frac{17}{25}\right)$. 
\item $\left(k,q,b,c\right)=\left(14,3,1,2\right)$, $x_1\in \left(\frac{2}{11},\frac{3}{16}\right)$, $x_2\in \left(\frac{8}{25},\frac{8}{23}\right)$, $x_3\in  \left(\frac{2}{5},\frac{9}{22}\right)$. 
\item $\left(k,q,b,c\right)=\left(16,3,1,2\right)$, $x_1\in \left(\frac{6}{25},\frac{1}{4}\right)$, $x_2\in \left(\frac{3}{10},\frac{7}{23}\right)$, $x_3\in \left(\frac{9}{23},\frac{2}{5}\right)$. 
\item $\left(k,q,b,c\right)=\left(16,3,1,2\right)$, $x_1\in \left(\frac{6}{25},\frac{1}{4}\right)$, $x_2\in \left(\frac{7}{23},\frac{4}{13}\right)$, $x_3\in \left(\frac{5}{13},\frac{9}{23}\right)$.  
\item $\left(k,q,b,c\right)=\left(16,3,1,2\right)$, $x_1\in \left(\frac{6}{25},\frac{1}{4}\right)$, $x_2\in \left(\frac{4}{13},\frac{5}{16}\right)$, $x_3\in \left(\frac{3}{8},\frac{5}{13}\right)$. 
\item $\left(k,q,b,c\right)=\left(17,3,1,2\right)$, $x_1\in \left(\frac{1}{10},\frac{2}{19}\right)$, $x_2\in \left(\frac{5}{13},\frac{9}{23}\right)$, $x_3\in \left(\frac{11}{25},\frac{9}{20}\right)$.
\item $\left(k,q,b,c\right)=\left(16,14,3,13\right)$, $x_1\in \left(\frac{2}{13},\frac{3}{19}\right)$, $x_2\in \left(\frac{7}{25},\frac{2}{7}\right)$, $x_3\in \left(\frac{8}{23},\frac{7}{20}\right)$. 
\item $\left(k,q,b,c\right)=\left(16,23,5,8\right)$, $x_1\in \left(\frac{2}{13},\frac{3}{19}\right)$, $x_2\in \left(\frac{7}{25},\frac{2}{7}\right)$, $x_3\in \left(\frac{13}{14},\frac{14}{15}\right)$. 
\item $\left(k,q,b,c\right)=\left(16,29,10,27\right)$, $x_1\in \left(\frac{2}{13},\frac{3}{19}\right)$, $x_2\in \left(\frac{5}{23},\frac{2}{9}\right)$, $x_3\in \left(\frac{7}{25},\frac{2}{7}\right)$.  
\end{enumerate}
\end{thm}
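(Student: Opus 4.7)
The proof is a finite verification: the parameter tuple $(k, q, b, c)$ ranges over a finite set, and once it is fixed, the floor-function constraints cut the unit cube into finitely many half-open rectangular cells. The whole problem reduces to an exhaustive enumeration, which I would organize as follows.

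\emph{Step 1 (finitize the outer parameters).} The tuple $(k, q, b, c)$ runs over integers with $13 \leq k \leq 28$, $3 \leq q \leq 32$, $0 \leq b \leq c < q$, $\gcd(b, q) = \gcd(c, q) = 1$; this yields only a few thousand candidates. For each, set $g(n) := \lfloor nb/q \rfloor + \lfloor nc/q \rfloor$, which is a known integer function of $n$, and define
\[
T(n) := \begin{cases} 2n - 3 - g(n) & \text{if } 2 \leq n \leq k-1, \\ 2n - 4 - g(n) & \text{if } k \leq n \leq \max\{2k-8, 25\},\ n \neq k+1. \end{cases}
\]
Let $S(k, q)$ be the set of admissible $n$ above with $q \nmid n$. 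The remaining constraints read $\lfloor nx_1 \rfloor + \lfloor nx_2 \rfloor + \lfloor nx_3 \rfloor = T(n)$ for all $n \in S(k, q)$.

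\emph{Step 2 (cellular decomposition).} For each fixed $n$, the equation above restricts $(x_1, x_2, x_3)$ to the union of half-open cells $\prod_{i=1}^{3} [m_i/n, (m_i+1)/n)$ indexed by triples with $0 \leq m_1 \leq m_2 \leq m_3 \leq n-1$ and $m_1 + m_2 + m_3 = T(n)$. The solution set is the intersection of these unions over $n \in S(k, q)$, intersected with $\{x_1 \leq x_2 \leq x_3\}$. The search is a depth-first recursion: maintain a current rectangle $[\alpha_1, \beta_1) \times [\alpha_2, \beta_2) \times [\alpha_3, \beta_3)$ and, at step $n$, enumerate $m_i \in [\lfloor n \alpha_i \rfloor, \lceil n \beta_i \rceil - 1]$ (typically only one or two values per coordinate), keep only triples with $m_1+m_2+m_3 = T(n)$, and recurse on each nonempty intersection. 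When all $n \in S(k, q)$ have been processed, record the surviving rectangles and check at the end that their union matches the ten cases (1)--(10).

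\emph{Main obstacle.} The principal difficulty is combinatorial: the number of admissible triples $(m_1, m_2, m_3)$ for a single $n$ can grow quadratically, and without pruning the enumeration over $(k, q, b, c)$ is infeasible. The key accelerations are (i) processing $n$ in increasing order so that early equations trap $(x_1, x_2, x_3)$ in a narrow box and subsequent $m_i$-ranges collapse to singletons, (ii) exploiting the symmetry $x_1 \leq x_2 \leq x_3$ to cut the per-step count by a factor of $3!$, and (iii) bookkeeping carefully around the structural irregularities of $S(k, q)$: the exclusion of $k+1$, the gaps caused by $q \mid n$, and the piecewise definition of $T(n)$ at $n = k$. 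A secondary concern is verifying \emph{completeness} of the output, that is, that none of the ten listed rectangles has been inadvertently shrunk; the safest way is to confirm, for each surviving rectangle, that every $n \in S(k, q)$ produces a constant triple $(m_1, m_2, m_3)$ across the full rectangle, so that no cell has been split. With these safeguards the enumeration terminates and yields exactly the list (1)--(10).
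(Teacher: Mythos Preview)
Your proposal is correct and matches the paper's approach: the paper proves this theorem by computer enumeration, using an inductive interval-splitting algorithm (explained there for the analogous five-variable Theorem~A.2) that processes the floor equations for increasing $n$, maintaining the solution set as a finite union of rectangular cells and refining at each step. Your organization---looping over the finite set of $(k,q,b,c)$, precomputing $g(n)$ and $T(n)$, and then running the depth-first cell refinement on $(x_1,x_2,x_3)$ with the pruning and symmetry reductions you describe---is exactly this scheme specialized to the present statement.
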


\begin{thm}\label{thm: 2 to 12 no solutions}
Find all real numbers $x_1,x_2,x_3$, such that
\begin{itemize}
    \item $0<x_1\leq x_2\leq x_3<1$, and
    \item $$\sum_{i=1}^3\lfloor nx_i\rfloor=n-2$$
    holds for every integer $n$ such that $n\in [2,12]$.
\end{itemize}
 Then we have no solution.
\end{thm}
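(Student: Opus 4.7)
The plan is to process the equations $\sum_{i=1}^{3} \lfloor n x_i \rfloor = n-2$ in a carefully chosen order of $n$, using each equation to narrow the intervals containing $x_1, x_2, x_3$, and to derive a contradiction once the constraints become tight enough. The key observation is that many of the equations, once we combine them with the ordering $x_1 \leq x_2 \leq x_3$, have a unique choice of $(\lfloor n x_1 \rfloor, \lfloor n x_2 \rfloor, \lfloor n x_3 \rfloor)$, so branching case-analysis can be largely avoided if we order the steps well.

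First I would exploit the equations whose floor values are fully determined by $0 < x_1 \leq x_2 \leq x_3 < 1$: the case $n=2$ forces $x_i < 1/2$ for all $i$; $n=3$ then forces $x_3 \geq 1/3$ and $x_1, x_2 < 1/3$; $n=4$ forces $x_2 \geq 1/4$ and $x_1 < 1/4$; $n=6$ pins down $\lfloor 6 x_1 \rfloor = 1$, so $x_1 \geq 1/6$; and $n=12$ gives $\lfloor 12 x_1 \rfloor = 2$, $\lfloor 12 x_2 \rfloor = 3$ automatically, so $\lfloor 12 x_3 \rfloor = 5$, i.e.\ $x_3 \geq 5/12$. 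At this stage one is confined to the box $x_1 \in [1/6, 1/4)$, $x_2 \in [1/4, 1/3)$, $x_3 \in [5/12, 1/2)$.

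Next I would tighten the bounds using $n=5, 9, 7$ in turn. Because $x_3 \geq 5/12 > 2/5$, the equation $n=5$ forces $\lfloor 5 x_3 \rfloor = 2$ and $\lfloor 5 x_2 \rfloor = 1$, hence $\lfloor 5 x_1 \rfloor = 0$, i.e.\ $x_1 < 1/5$. With $x_1 < 1/5$, the equation $n=9$ has $\lfloor 9 x_1 \rfloor = 1$ and $\lfloor 9 x_2 \rfloor = 2$ forced, so $\lfloor 9 x_3 \rfloor = 4$, i.e.\ $x_3 \geq 4/9$. Then $n=7$ has $\lfloor 7 x_1 \rfloor = 1$ and (since $x_3 \geq 4/9 > 3/7$) $\lfloor 7 x_3 \rfloor = 3$ forced, so $\lfloor 7 x_2 \rfloor = 1$, giving $x_2 < 2/7$.

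Finally I would evaluate $n=10$ on the refined box $x_1 \in [1/6, 1/5)$, $x_2 \in [1/4, 2/7)$, $x_3 \in [4/9, 1/2)$: here $10 x_1 \in [5/3, 2)$, $10 x_2 \in [5/2, 20/7)$, and $10 x_3 \in [40/9, 5)$, so the three floors are forced to equal $1$, $2$, and $4$ respectively, summing to $7$. But the equation requires the sum to be $10-2 = 8$, so no solution exists. There is no essential obstacle — the case analysis is finite and shallow; the main subtlety is simply to order the steps so that each equation either is uniquely determined or has its branches killed by previously established bounds. In particular, processing $n=12$ before $n=5$ is the crucial move, since the resulting bound $x_3 \geq 5/12 > 2/5$ is exactly what kills the otherwise annoying branch of $n=5$.
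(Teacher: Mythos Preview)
Your proof is correct and self-contained. The underlying mechanism---successively narrowing a product-of-intervals solution region by imposing the equations $\sum_i\lfloor nx_i\rfloor=n-2$ one at a time---is exactly what the paper uses, but the paper carries it out as a computer enumeration: it processes $n=2,3,\ldots,12$ in order, at each step splitting the surviving boxes and retaining those that satisfy the new equation, until the solution set is empty. Your hand proof follows a different \emph{order} of the indices $n$, and this is a genuine refinement: by inserting $n=12$ immediately after $n=2,3,4,6$, you obtain $x_3\geq 5/12>2/5$ for free, which forces the floor values at $n=5$ uniquely and eliminates the branch that would otherwise have to be carried forward. With that branch killed, every subsequent equation ($n=9$, then $n=7$, then $n=10$) has all three floors forced, so the argument is linear rather than tree-shaped and terminates in a single contradiction at $n=10$. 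What your approach buys is a complete, short, human-checkable proof; what the paper's approach buys is uniformity---the same algorithm handles the several analogous theorems in Section~3 without needing a bespoke ordering for each.
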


\noindent\textbf{Explanation on algorithm of Theorem \ref{thm: 17equations no solution}}. For every integer $k\geq 2$, we let $V_k$ be the solution space $(x_1,x_2,x_3,x_4,x_5)\in [0,1)^5$ of 
$$\sum_{i=1}^5\lfloor nx_i\rfloor=2n-3,\forall\ 2\leq n\leq k.$$
Then $V_k$ is always a disjoint union of multiples of intervals. We find $V_k$ by induction on $k$. When $k=2$, the solution space is $$V_2=\left(\left[0,\frac{1}{2}\right),\left[0,\frac{1}{2}\right),\left[0,\frac{1}{2}\right),\left[0,\frac{1}{2}\right),\left[\frac{1}{2},1\right)\right).$$

Suppose that we have $V_k=\cup_{i=1}^mV_k^i$ for some integer $m\geq 0$, where $V_k^i$ are the irreducible components of $V_k$, and each $V_k^i$ is of the form 
$$\prod_{j=1}^5\left[a^i_{k,j,0},a^i_{k,j,1}\right).$$
Notice that for any $x\in \left[a^i_{k,j,0},a^i_{k,j,1}\right)$, and every integer $n\in [2,k]$, $\lfloor nx\rfloor$ is a constant. Thus 
\begin{itemize}
    \item either $\lfloor (k+1)x\rfloor$ is a constant for every $x\in \left[a^i_{k,j,0},a^i_{k,j,1}\right)$. In this case we let $V^i_{k,j}(0)=V^i_{k,j}(1):=\left[a^i_{k,j,0},a^i_{k,j,1}\right)$, or 
    \item there exists $a^i_{k,j,0}<c<a^i_{k,j,1}$ such that $(k+1)c\in\mathbb N^+$. In this case, let $V^i_{k,j}(0):=\left[a^i_{k,j,0},c\right)$ and $V^i_{k,j}(1):=\left[c,a^i_{k,j,1}\right)$.
\end{itemize}
Then there are at most $2^5=32$ possibilities (indeed it is much smaller practically) of $\prod_{j=1}^5V^i_{k,j}(\delta_j)$, where $\delta_j\in\left\{0,1\right\}$ for every $j$, such that $\sum_{j=1}^5\lfloor (k+1)x_j\rfloor$ is a constant along each set. We pick out those sets satisfying our requirements, denote $V_{k+1}$ to be the union of all of them, and continue the induction on $k$.

\begin{proof}[Proof of Theorem \ref{thm: A41349}]
For every $\frac{1}{r}(a_1,a_2,a_3,a_4,a_5)\in\cup_{r=1}^{51}\bar{\mathcal{A}}_r\left(4,\frac{1}{13}\right)$, since $4\times 13>51\geq r$, $\sum_{i=1}^5a_i=2r-1,2r-2$ or $2r-3$. Possibly reordering $a_1,a_2,a_3$ and $a_4,a_5$, we may assume that $0<a_1\leq a_2\leq a_3\leq r$ and $0<a_4\leq a_5\leq r$. Thus Theorem \ref{thm: 5dimcycrleq49} implies Theorem \ref{thm: A41349}. 
\end{proof}
\begin{proof}[Proof of Theorem \ref{thm: dn1218}]
Since $q:=q(x\in X)\geq 19$, $n\in\Ii_q$ for every integer $n\in [2,18]$. Thus Theorem \ref{thm: 17equations no solution} implies Theorem \ref{thm: dn1218}.
\end{proof}
\begin{proof}[Proof of Theorem \ref{thm: d314}]
Since $q:=q(x\in X)\geq 33$, we have $n\in\Ii_q$ for every integer $n\in [2,32]$. Thus Theorem \ref{thm: d314} follows from Theorem \ref{thm: k1318noq} by noticing that for every $\bm{x}=(x_1,x_2,x_3,x_4,x_5)$ as in the output of Theorem \ref{thm: k1318noq}, $\mathcal{D}(31,4)$ holds.
\end{proof}
\begin{proof}[Proof of Theorem \ref{thm: dn1332}]
Possibly reordering the coordinates of the associated point of $(x\in X)$, the theorem follows from \ref{thm: dn1332}.
\end{proof}
\begin{proof}[Proof of Theorem \ref{thm: d314d283}] Let $\bm{x}=\big(x_1,x_2,x_3,\frac{b}{q},\frac{c}{q}\big)$ be the associated point of $(x\in X)$. By Theorem \ref{thm: 17equations no solution divi6}, possibly reodering $x_1,x_2,x_3$, $(k,q,b,c,x_1,x_2,x_3)$ satisfies one the cases (1)--(10) of Theorem \ref{thm: 17equations no solution divi6}. The proof follows from the following:
\begin{itemize}
    \item In Case (1)(2), or in Case(3) and $x_2\in \big(\frac{9}{28},\frac{9}{26}\big)$, $\mathcal{C}(27)$ holds.
    \item In Case (3) and $x_2\in \left(\frac{8}{25},\frac{9}{28}\right)$ or in Case (8)(9)(10), $\mathcal{D}(31,4)$ holds.
    \item In Case (3) and $x_2\in \left(\frac{9}{26},\frac{8}{23}\right)$, $\mathcal{D}(31,3)$ and  $\mathcal{D}(35,2)$ holds.
    \item In Case (4)(5), $\mathcal{C}(33)$ holds.
    \item In Case (6), $\mathcal{C}(45)$ holds
    \item In Case (7), $k=16, q=3$, and $\mathcal{D}(31,3)$ holds.
\end{itemize}
\end{proof}

\begin{proof}[Proof of Theorem \ref{thm: 3dimcyc13}]
We have $q:=q(x\in X)=r\geq 13$. Thus $n\in\Ii_q$ for every integer $n\in [2,12]$. Since $a_4=1$ and $a_5=r-1$, for any integer $n\in [1,r-1]$, $$\lfloor\frac{na_4}{r}\rfloor+\lfloor\frac{na_5}{r}\rfloor=n-1.$$ Therefore, $\mathcal{D}(n,1)$ is equivalent to $$\sum_{i=1}^3\lfloor nx_i\rfloor=n-2$$ for every $n\in [2,12]$. Thus Theorem \ref{thm: 2 to 12 no solutions} implies Theorem \ref{thm: 3dimcyc13}.
\end{proof}

\begin{proof}[Proof of Theorem \ref{thm: d213}]
Suppose that $(x\in X)=\frac{1}{r}(a_1,a_2,a_3,a_4,a_5)$. Possibly reordering $a_1,a_2,a_3$, we may assume that $a_1\leq a_2\leq a_3$. Let $\bm{x}=(x_1,x_2,x_3,x_4,x_5)$ be the associated point of $(x\in X)$, then $x_4=x_5=\frac{1}{2}$. Since $\mld(x,X)>2-\frac{1}{4}$, $r>4$, hence $\gcd(a_4,r)>2$. Since $(x\in X)\in\bar{\mathcal{A}}(4)$,
\begin{equation}\label{equ: star}
x_i+x_j\in\left\{\frac{1}{2},\frac{3}{2}\right\} \tag{$\star$}
\end{equation}
for some $1\leq i\not=j\leq 3$. Let $g:\Ii_2\rightarrow\mathbb N$ be the function defined by $$g(n):=\sum_{i=1}^3\lfloor nx_i\rfloor$$ then $\mathcal{D}(n,1)$ holds if and only if $n\in\Ii_2$ and $g(n)=n-2$. 

Suppose that $\mathcal{D}(n,1)$ holds for $n\in\left\{3,5,7,9\right\}$. Since $g\left(3\right)=1$, $x_1\in \left(0,\frac{1}{3}\right), x_2\in \left(0,\frac{1}{3}\right)$, and $x_3\in \left[\frac{1}{3},\frac{2}{3}\right)$.

Since $g\left(5\right)=3$, by (\ref{equ: star}), one of the following holds:
\begin{itemize}
\item $x_1\in \left(0,\frac{1}{5}\right), x_2\in \left[\frac{1}{5},\frac{1}{3}\right)$, and $x_3\in \left[\frac{2}{5},\frac{3}{5}\right)$.
\item $x_1\in \left[\frac{1}{5},\frac{1}{3}\right), x_2\in \left[\frac{1}{5},\frac{1}{3}\right)$, and $x_3\in \left[\frac{1}{3},\frac{2}{5}\right)$.
\end{itemize}

Since $g\left(7\right)=5$, by (\ref{equ: star}), one of the following holds:
\begin{itemize}
\item  $x_1\in \left(0,\frac{1}{7}\right), x_2\in \left[\frac{2}{7},\frac{1}{3}\right)$, and $x_3\in \left[\frac{3}{7},\frac{4}{7}\right)$. 
\item $x_1\in \left[\frac{1}{7},\frac{1}{5}\right), x_2\in \left[\frac{2}{7},\frac{1}{3}\right)$, and $x_3\in \left[\frac{2}{5},\frac{3}{7}\right)$. 
\item $x_1\in \left[\frac{1}{5},\frac{2}{7}\right), x_2\in \left[\frac{2}{7},\frac{1}{3}\right)$, and $x_3\in \left[\frac{1}{3},\frac{2}{5}\right)$.
\end{itemize}

Since $g\left(9\right)=7$, one of the following holds:
\begin{itemize}
\item  $x_1\in \left(0,\frac{1}{9}\right), x_2\in \left[\frac{2}{7},\frac{1}{3}\right)$, and $x_3\in \left[\frac{5}{9},\frac{4}{7}\right)$.  
\item  $x_1\in \left(\frac{1}{9},\frac{1}{7}\right), x_2\in \left[\frac{2}{7},\frac{1}{3}\right)$, and $x_3\in \left[\frac{4}{9},\frac{5}{9}\right)$. 
\item $x_1\in \left[\frac{2}{9},\frac{2}{7}\right), x_2\in \left[\frac{2}{7},\frac{1}{3}\right)$, and $x_3\in \left[\frac{1}{3},\frac{2}{5}\right)$.  
\end{itemize}
But they all contradict to (\ref{equ: star}).
\end{proof}


\begin{thebibliography}{99}
	\bibitem{Ale93} V. Alexeev, \textit{Two two--dimensional terminations}, Duke Math. J., 69(3), 1993: 527--545. Res. Lett. \textbf{6}, no. 5--6, 573--580, 1999.
	
	\bibitem{Amb06} F. Ambro, \textit{The set of toric minimal log discrepancies}, Cent. Eur. J. Math. 4, no. 3, 358--370, 2006.
	
	\bibitem{Bor97} A.A. Borisov, \textit{Minimal discrepancies of toric singularities}, Manuscripta Mathematica 92(1), 1997: 33--45.
	
	\bibitem{CH20} G. Chen and J. Han, \textit{Boundedness of $(\epsilon,n)$-Complements for Surfaces}, arXiv:2002.02246v2, 2020.

\bibitem{HL20} J. Han and Y. Luo, \textit{On boundedness of divisors computing minimal log discrepancies for surfaces}, arXiv: 2005.09626v2, 2020.


\bibitem{HLS19} J. Han, J. Liu, and V.V. Shokurov, \textit{ACC for minimal log discrepancies for exceptional singularities}, arXiv: 1903.04338v2, 2019.

	
	\bibitem{Jia19} C. Jiang, \textit{A gap theorem for minimal log discrepancies of non-canonical singularities in dimension three}, arXiv: 1904.09642v1, 2019.

	\bibitem{Kaw11} M. Kawakita, {Towards boundedness of minimal log discrepancies by the Riemann--Roch theorem}. Amer. J. Math., 133(5), 2011: 1299--1311.
	
	
	\bibitem{Kaw14} M. Kawakita, \textit{Discreteness of log discrepancies over log canonical triples on a fixed pair}, Journal of Algebraic Geometry, 23(4), 2014: 765--774.

\bibitem{Kaw15} M. Kawakita, \textit{A connectedness theorem over the spectrum of a formal power series ring}, Int. J.
Math. \textbf{26}, No. 11, Article ID 1550088, 27p. (2015)

	\bibitem{Kaw18} M. Kawakita, {On equivalent conjectures for minimal log discrepancies on smooth threefolds}. To appear in J. Algebraic Geom., arXiv: 1803.02539, 2018.


	\bibitem{Liu18} J. Liu, \textit{Toward the equivalence of the ACC for a-log canonical thresholds and the ACC for minimal log discrepancies}. arXiv: 1809.04839v3, 2018.

\bibitem{LX19} J. Liu and L. Xiao, \textit{An optimal gap of minimal log discrepancies of threefold non-canonical singularities} (The first version of this paper with precise algorithms), arXiv: 1909.08759v1, 2019.

\bibitem{MN18} Mircea Musta\c{t}\v{a} and Yusuke Nakamura, {A boundedness conjecture for minimal log discrepancies on a fixed germ} in Local and global methods in algebraic geometry, Contemp. Math. (712), 287--306, 2018.



\bibitem{Nak16} Y. Nakamura, \textit{On minimal log discrepancies on varieties with fixed Gorenstein index}, Michigan Math. J., Volume 65, Issue 1, 165--187, 2016.

		\bibitem{Sho94} V.V. Shokurov, \textit{A.c.c. in codimension 2}, 1994 (preprint).
	
		\bibitem{Sho96} V.V. Shokurov, {3--fold log models}. J. Math. Sciences \textbf{81}, 2677--2699, 1996.
		
		\bibitem{Sho00} V.V. Shokurov, \textit{Complements on surfaces}. J. Math. Sci. (New York), 102, no. 2 (2000): 3876--3932.

	\bibitem{Sho04} V.V. Shokurov, \textit{Letters of a bi-rationalist}, V. Minimal log discrepancies and termination of log flips. (Russian) Tr. Mat. Inst. Steklova 246,  Algebr. Geom. Metody, Svyazi i Prilozh., 328--351, 2004.			
	
\end{thebibliography}
\end{document}